% !TEX encoding = System
\documentclass[12pt,twoside,letter]{amsart}
\usepackage{mathrsfs}

\usepackage{txfonts}
\usepackage{bbm}

\usepackage{amsfonts}
\usepackage{amsmath}
\usepackage{amssymb}
\usepackage{wasysym}
\usepackage{psfrag}
\usepackage{graphics,epsfig,amsmath}  % standard LaTex graphics tool
\setlength{\unitlength}{1cm}
\usepackage{comment}
\usepackage{xcolor}
\usepackage{enumerate}
\definecolor{mycolor}{rgb}{0.2, 0.1, 0.65}
\definecolor{gcolor}{rgb}{0.20,0.43,0.09}
\definecolor{3color}{rgb}{0.10,0.9,0.9}
\definecolor{4color}{rgb}{0.30,0.9,0.09}
\newenvironment{nota}{\medskip\noindent{\sc
Notation}.}{\goodbreak\medskip}
\newenvironment{notas}{\medskip\noindent{\sc
Notations}.}{\goodbreak\medskip}
\newtheorem{theorem}{Theorem}
\newtheorem{definition}{Definition}

\newenvironment{remks}{\noindent{\sc
Remarks}. }{\goodbreak\vskip10pt}
\newenvironment{ques}{\noindent{\sc
Question}. }{\goodbreak\vskip10pt}
\newtheorem{lemma}{Lemma}
\newtheorem{prop}{Proposition}
 
\newtheorem{cor}{Corollary}

\def\N{\mathbb{N}}
\def\R{\mathbb{R}}
\def\T{\mathbb{T}}
\def\cA{{\mathcal A}}
\def\cD{{\mathcal D}}
\def\cE{{\mathcal E}}
\def\cF{{\mathcal F}}
\def\cH{{\mathcal H}}
\def\cG{{\mathcal G}}
\def\cK{{\mathcal K}}
\def\cL{{\mathcal L}}
\def\cM{{\mathcal M}}
\def\cN{{\mathcal N}}
\def\cS{{\mathcal S}}
\def\cU{{\mathcal U}}
\def\Oc{{\mathcal O}}
\def\cV{{\mathcal V}}
\def\cZ{{\mathcal Z}}

\DeclareMathOperator*{\argmin}{arg\,min}

\title[$C^1$ and $C^2$-convergence to weak K.A.M. solutions]{On the $C^1$ and $C^2$-convergence to weak K.A.M. solutions}

\author[M.-C. Arnaud] {Marie-Claude Arnaud}
\address{Avignon Universit\'e\\
 Laboratoire de Math\'ematiques d'Avignon 
(EA 2151)\\ 
F-84018  Avignon, France}
\email{Marie-Claude.Arnaud@univ-avignon.fr}
\thanks{$\ddag$   member of the {\sl Institut universitaire de France.}}

\author[X. Su] {Xifeng Su}
\address{ School of Mathematical Sciences\\
Beijing Normal University\\
No. 19, XinJieKouWai St.,HaiDian District\\
 Beijing 100875, P. R. China}
\email{xfsu@bnu.edu.cn, billy3492@gmail.com}

\begin{document}

\maketitle
\vspace{0.1in}

\begin{abstract}

We introduce a notion of upper Green regular solutions to the Lax-Oleinik semi-group that is defined on the set of $C^0$ functions of a closed manifold via a Tonelli Lagrangian. Then we prove some weak $C^2$ convergence results to such a solution for a large class of approximated solutions as
\begin{enumerate}
\item the discounted solution (see \cite{DFIZ16});
\item the image of a $C^0$ function by the Lax-Oleinik semi-group;
\item the weak K.A.M. solutions for perturbed cohomology class.
\end{enumerate}
This kind of convergence implies the convergence in  measure of the second derivatives.

Moreover, we provide an example that is not upper Green regular and to which we have $C^1$ convergence but not convergence in measure of the second derivatives.

\end{abstract}

\section{Introduction}
This article focuses on some weak solutions of the stationary Hamilton-Jacobi equation $H(\cdot, du(\cdot))=c$ on some closed manifold $M^{(d)}$. Classical solutions of this equation are generating functions of  Lagrangian submanifolds that are invariant by  the Hamiltonian flow, but it often happens that such classical solutions don't exist. 

The viscosity solutions were then introduced  by P.-L. Lions and M.G. Crandall (see \cite{CraLi1983}) and provide generalized solutions under very weak hypotheses for $H$. In 1997 and in a convex setting, A.~Fathi proved his weak K.A.M. theorem (see \cite{Fathi1997}) that provides weak K.A.M. solutions and also proved (see \cite{Fathi2008}) that these solutions coincide with the viscosity solutions. The weak K.A.M. solutions are   fixed points of the so-called Lax-Oleinik semi-group and Fathi proved in \cite{Fathi1998} the convergence of the Lax-Oleinik semi-group to weak K.A.M. solutions in $C^0$-topology. 

Here we consider various problems of $C^1$ and $C^2$ convergence, that correspond to a convergence of graphs of discontinuous Lagrangian submanifolds of $T^*M$. Before our results, only results concerning the $C^0$-convergence were known. 

%{\color{red}
%It is well-known that solving the stationary Hamilton-Jacobi equation (also called PDE cell equation) 
%could be transformed  into solving an additive eigenvalue problem for a semi-group of nonlinear operators (called Lax-Oleinik semi-group). This approach is initiated by J. Mather and A. Fathi and referred as weak K.A.M. theory. 

%The fact that $C^0$  limit of the Lax-Oleinik semi-group acting on any continous function exists makes a bridge between the global viscosity solutions of Hamilton-Jacobi equation and the minimal invariant sets of Hamiltonian systems. 
%}

We study the problem of the $C^1$ or $C^2$ convergence of approximated solutions for the Lax-Oleinik semi-group defined on a closed manifold $M$. More precisely, we will consider the  following three problems:
%\marginpar{\color{red}I don't know whether it is good to put the same paragraph above as in the abstract?}
\begin{enumerate}
\item \label{Pt2} the dependence of the weak K.A.M. solution on the cohomology class;
\item the convergence of the so-called discounted solution (see \cite{DFIZ16});
\item \label{Pt1} the convergence of the Lax-Oleinik semi-group to a weak K.A.M. solution.
\end{enumerate}

The problem of  $C^1$ convergence for Point (\ref{Pt1}) was partially solved in \cite{Arnaud2005}. 
The Dynamics that we will consider are Hamiltonian or conformally Hamiltonian on $T^*M$ and are all  convex in the fiber, which means the following.
\begin{definition}
A $C^2$ function $H:(q, p)\in T^*M \mapsto H(q, p)\in \R$ is $C^2$-convex in the fiber direction if for every $x\in T^*M$, the Hessian in the fiber direction $\frac{\partial^2 H}{\partial p^2}(x)$, denoted by $H_{p,p}(x)$ for short, %{\color{blue} AS THIS NOTATION IS USED JUST HERE, I THINK IT IS BETTER TO SUPRESS IT}} {\color{red}Actually, this notation does appear at the Section 3 of Green Bundle on Page 13.}
 is positive definite as a quadratic form.\\
The $C^2$ function  $H$ is superlinear in the fiber direction if for any Riemannian metric on $M$,  for any $A>0$, there exists $B$ such that
$$\forall (q, p)\in T^*M, H(q, p)\geq A\| p\|+B.$$
A Tonelli Hamiltonian is a function that is superlinear and $C^2$-convex in the fiber direction.
\end{definition}

We will be interested in conformally Hamiltonian flows associated to a Tonelli Hamiltonian, defined by the following equations (see \cite{MaSo2016}) with $\lambda>0$. 
\begin{equation} \label{EhamD}
\frac{dq}{dt}=\frac{\partial H}{\partial p}(q, p)\quad{\rm and}\quad\frac{dp}{dt}=-\frac{\partial H}{\partial q}(q, p)-\lambda\ p.\end{equation} 
Observe that the case $\lambda=0$ is the Hamiltonian case.

\begin{definition}[Hausdorff distance]
Let $(X,d)$ be a metric space. For any non-empty compact subsets $K_1, K_2$ of $X$, the Hausdorff distance between $K_1$ and $K_2$ is defined by
\[
d_H(K_1,K_2) := \max\{ \rho(K_1, K_2), \rho(K_2, K_1) \}
\]
where $\rho(K_1, K_2) = \sup_{x\in K_1} d(x, K_2)$.
\end{definition}

\begin{nota}
\begin{itemize}
\item Choosing a Riemannian metric, we will denote by $d_H$ the associated Hausdorff distance in $T^*M$;
\item if $\Lambda:N\subset M\rightarrow T^*M$ is a section of $\pi:T^*M\rightarrow M$, its {\em graph} is denoted by
$$\cG(\Lambda)=\{~ (q, \Lambda(q))~:~ q\in N~\}\subset T^*M;$$
\item at every $x\in T^*M$, the vertical subspace at $x$ is $V(x)=\ker D\pi(x)$;
\item  if $A$ is a subset of a topological space, we denote its closure by $\bar A$.
\end{itemize}
\end{nota}

\begin{theorem}\label{TC1discounted}
Let $H:T^*M\rightarrow \R$ be a $C^2$ Tonelli Hamiltonian. Let $(u_\lambda)_{\lambda\in (0, 1]}$ be the solutions to the associated discounted problem (see \cite{DFIZ16} ) and let $u:M\rightarrow \R$ be their  limit $\displaystyle{\lim_{\lambda\rightarrow 0^+ } u_\lambda=u}$. Then 
$$\lim_{\lambda\rightarrow 0^+ }d_H(\overline{\cG(du_\lambda)}, \overline{\cG(du)})=0.$$
\end{theorem}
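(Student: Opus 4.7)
The strategy is to establish Hausdorff convergence via the two inequalities
\[
\rho\bigl(\overline{\cG(du_\lambda)},\overline{\cG(du)}\bigr)\longrightarrow 0\quad\text{and}\quad \rho\bigl(\overline{\cG(du)},\overline{\cG(du_\lambda)}\bigr)\longrightarrow 0,
\]
exploiting three ingredients: (a) uniform semi-concavity of the family $(u_\lambda)$, (b) the $C^0$-convergence $u_\lambda\to u$ that is part of the hypotheses, and (c) the dynamical description of the closures of gradient graphs of weak solutions as unions of backward-calibrated orbits of the flow (\ref{EhamD}).

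First I would prove uniform semi-concavity. Using the variational representation
\[
u_\lambda(x)=\inf_{\gamma(0)=x}\int_{-\infty}^{0}e^{\lambda s}\,L(\gamma(s),\dot\gamma(s))\,ds,
\]
a standard inner-variation argument produces a semi-concavity constant for $u_\lambda$ depending only on a priori bounds on the minimizers (themselves uniform in $\lambda\in(0,1]$) and not on $\lambda$ itself. Consequently, for each $\lambda$ the set $\overline{\cG(du_\lambda)}$ coincides with the graph of the reachable superdifferential $\partial^{*}u_\lambda$, and all these sets lie in a common compact subset $K\subset T^*M$; the same holds for $u$. This confines the whole problem to a fixed compact and allows one to work with subsequential Hausdorff limits.

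For the first inclusion, pick $(x_n,p_n)\in \cG(du_{\lambda_n})$ with $\lambda_n\to 0^+$. At each differentiability point $x_n$ of $u_{\lambda_n}$ there exists a backward-calibrated curve, i.e.\ an orbit of the conformally Hamiltonian flow (\ref{EhamD}) with parameter $\lambda_n$, whose lift to $T^*M$ remains in $\overline{\cG(du_{\lambda_n})}$. Since (\ref{EhamD}) depends continuously on $\lambda$, any uniform-on-compacts subsequential limit of these lifts is an orbit of the genuine Hamiltonian flow ($\lambda=0$). Combining this with the uniform convergence $u_{\lambda_n}\to u$ and passing to the limit in the discounted calibration identity, the limiting orbit is calibrated for $u$; its initial point-momentum pair therefore lies in $\overline{\cG(du)}$.

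The main obstacle is the reverse inclusion: every $(x,p)\in\overline{\cG(du)}$ must be approximated by points $(x_\lambda,p_\lambda)\in\overline{\cG(du_\lambda)}$. My plan is a subsequential compactness argument: by step~2 the family $\overline{\cG(du_\lambda)}$ has Hausdorff-convergent subsequences with limits inside $K$, and by step~3 each such limit is contained in $\overline{\cG(du)}$; the content is to prove equality. I would take a backward $u$-calibrated curve $(\gamma,p_\gamma)$ through $(x,p)$, propagate it forward by a small time $T$ to reach a point $\gamma(T)$ at which $u$ is differentiable, use the uniform convergence $u_\lambda\to u$ together with semi-concavity to pick a nearby differentiability point $y_\lambda$ of $u_\lambda$ with $(y_\lambda,du_\lambda(y_\lambda))$ close to $(\gamma(T),p_\gamma(T))$, and then flow backwards by $T$ along (\ref{EhamD}) --- the lifted orbit remaining in $\overline{\cG(du_\lambda)}$ --- to land near $(x,p)$. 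The delicate point will be to control this construction uniformly in $\lambda$ and $T$ at points of $\overline{\cG(du)}\setminus \cG(du)$, where $u$ is not differentiable; here I expect to need the specific selection property of the limit $u$ established in \cite{DFIZ16}, which pins down the $\alpha$-limit of such backward orbits inside the Aubry set and allows a perturbative matching with backward minimizers of the discounted actions.
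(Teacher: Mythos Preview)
Your first inclusion (step~3) is sound: passing to the limit in the discounted calibration identity along backward minimizers does show that any accumulation point of $\overline{\cG(du_\lambda)}$ lies in $\overline{\cG(du)}$.

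The reverse inclusion, however, contains a real gap. You write: ``propagate it \emph{forward} by a small time $T$ to reach a point $\gamma(T)$ at which $u$ is differentiable \ldots\ and then flow \emph{backwards} by $T$ along~(\ref{EhamD}), the lifted orbit remaining in $\overline{\cG(du_\lambda)}$.'' Neither direction does what you need. If you flow $(x,p)\in\overline{\cG(du)}$ \emph{forward} under the Hamiltonian flow, the image $\varphi_T(x,p)$ need not stay in $\overline{\cG(du)}$ (weak K.A.M. solutions are only backward invariant), so there is no reason for $u$ to be differentiable at $\pi\circ\varphi_T(x,p)$ with derivative $p_\gamma(T)$. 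If instead you go \emph{backward} along the calibrated curve to $\gamma(-T)$, then $u$ is indeed differentiable there; but to return to $(x,p)$ you would have to flow $(y_\lambda,du_\lambda(y_\lambda))$ \emph{forward} by the discounted flow~(\ref{EhamD}), and forward flow does \emph{not} preserve $\overline{\cG(du_\lambda)}$ (only the backward inclusion $\varphi_{-t}^\lambda(\cG(du_\lambda))\subset\cG(du_\lambda)$ holds for fixed points of $T^\lambda$). So the claim ``the lifted orbit remaining in $\overline{\cG(du_\lambda)}$'' fails. This is not a delicacy confined to non-differentiability points of $u$; it is a structural obstruction, and the selection property from \cite{DFIZ16} does not help.

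The fix is much simpler than your flow argument and uses only the uniform semi-concavity you already established. Since $\overline{\cG(du)}$ is the closure of $\cG(du)$, it suffices to approximate each $(z,du(z))$ with $z$ a differentiability point of $u$. Pick any sequence $y_\lambda\to z$ of differentiability points of $u_\lambda$; then $du_\lambda(y_\lambda)\in D^+u_\lambda(y_\lambda)$, and by uniform semi-concavity together with $u_\lambda\to u$ in $C^0$, every limit point of $du_\lambda(y_\lambda)$ lies in $D^+u(z)=\{du(z)\}$. Hence $(y_\lambda,du_\lambda(y_\lambda))\to(z,du(z))$, with $(y_\lambda,du_\lambda(y_\lambda))\in\cG(du_\lambda)$. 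A finite covering of $\overline{\cG(du)}$ by balls centred at such $(z_i,du(z_i))$ makes this uniform.

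For comparison, the paper does not argue via calibrated orbits at all. It recasts both inclusions through the $\argmin$ set
\[
\cE_t(q,u,c,\lambda)=\argmin_{(q,p)}\Bigl\{e^{-\lambda t}u\bigl(\pi\circ\varphi^{c,\lambda}_{-t}(q,p)\bigr)+\cM_t(q,p,c,\lambda)\Bigr\},
\]
observes that $(u,c,\lambda)\mapsto\cE_t(q,u,c,\lambda)$ is upper semi-continuous in the Hausdorff metric, and uses that for the limit fixed point $u_0$ one has the identification $\overline{\cG(du_0)}=\bigcup_q\cE_t(q,u_0,c_0,\lambda_0)$. This yields both directions simultaneously and in a framework that also covers Theorem~\ref{TC1cohom}; it does not invoke the specific structure of the DFIZ limit.
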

\begin{cor}\label{CC1discounted}
With the same hypotheses as in Theorem \ref{TC1discounted}, if $u$ is $C^1$, then $u_\lambda$ converges to $u$ for the uniform $C^1$ topology when $\lambda\rightarrow 0^+$.
\end{cor}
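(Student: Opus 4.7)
The plan is to derive the corollary directly from the Hausdorff convergence in Theorem \ref{TC1discounted}, exploiting the hypothesis $u\in C^1$: then $\overline{\cG(du)}=\cG(du)$ is the graph of a continuous section of $T^*M$, uniformly continuous because $M$ is compact. Denote by $\omega$ a modulus of continuity of $du$. The uniform convergence $u_\lambda\to u$ is already part of the setup of Theorem \ref{TC1discounted}, so what remains is to control the differentials.

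Given $\varepsilon>0$, Theorem \ref{TC1discounted} furnishes $\lambda_0>0$ such that
\[
d_H\bigl(\overline{\cG(du_\lambda)},\,\cG(du)\bigr)<\varepsilon \quad\text{for every } \lambda\in(0,\lambda_0].
\]
For any $q\in M$ at which $u_\lambda$ is differentiable, the point $(q,du_\lambda(q))\in\overline{\cG(du_\lambda)}$ lies within $\varepsilon$ of some $(q',du(q'))\in\cG(du)$; hence $d(q,q')<\varepsilon$ and $\|du_\lambda(q)-du(q')\|<\varepsilon$. The triangle inequality combined with the uniform continuity of $du$ yields
\[
\|du_\lambda(q)-du(q)\| < \varepsilon+\omega(\varepsilon),
\]
uniformly over every such $q$. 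Since the differentiability set of $u_\lambda$ is dense and of full measure in $M$, this is the sought uniform convergence of the differentials, wherever they are defined.

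To upgrade this to a genuine uniform $C^1$ convergence, I exploit that $u_\lambda$ is semiconcave (a standard property of the viscosity solution of the discounted Hamilton--Jacobi equation attached to a Tonelli Hamiltonian): the superdifferential $D^+u_\lambda(q)$ equals the convex hull of the fiber of $\overline{\cG(du_\lambda)}$ over $q$, hence by the previous step it is contained in the closed ball $\overline{B}_{\varepsilon+\omega(\varepsilon)}(du(q))$ uniformly in $q$, so that the multivalued map $q\mapsto D^+u_\lambda(q)$ collapses uniformly to the single-valued continuous section $du$ as $\lambda\to 0^+$. I expect the main difficulty to be clarifying the precise meaning of ``$u_\lambda\to u$ in the uniform $C^1$ topology'' when $u_\lambda$ may fail to be $C^1$ on a small exceptional set: reading the differential of a semiconcave function as its superdifferential turns the above uniform collapse into exactly the statement of the corollary, and the genuine uniform $C^0$-convergence of $du_\lambda$ on its domain of differentiability then follows at once.
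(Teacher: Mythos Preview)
Your proof is correct and follows essentially the same approach as the paper. The paper isolates the key step as Proposition~\ref{PC1graph} (Hausdorff convergence of compacts to the graph of a continuous section forces fiberwise uniform convergence), proving it by contradiction via a compactness argument, while you argue directly with a modulus of continuity for $du$; the content is the same, and your additional remark on the superdifferential is a reasonable way to interpret ``uniform $C^1$ convergence'' at the non-differentiability points of $u_\lambda$.
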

To give a similar statement in the case of varying cohomology classes, we introduce some notations.

\begin{nota}
For every $c$ in the linear space $H^1(M, \R)$, we choose in a continuous way a smooth closed 1-form $\eta_c$ with cohomology class $c$. When $M=\T^d$, we can identify $H^1(\T^d, \R)$ with the set of constant 1-forms.
\end{nota} 

\begin{theorem}\label{TC1cohom}
Let $H:T^*M\rightarrow \R$ be a $C^2$ Tonelli Hamiltonian. For every $c\in H^1(M, \R)$, we consider the modified Lax-Oleinik semi-group $(T^c_t)_{t\in \R_+}$ that corresponds to the closed $1$-form $\eta_c$, defined by
{
\[
T^c_t u (x) = \inf_{\gamma} \left\{  u(\gamma(-t)) + \int_{-t}^0 \big[ L(\gamma(s), \dot{\gamma}(s) ) - \langle \eta_c, \dot{\gamma}(s)\rangle +\alpha(c) \big] \right\}
\]
where the infimum is taken over all the absolutely continuous curves $\gamma: [-t, 0] \rightarrow M$ such that $\gamma(0)=x$\footnote{$\alpha(c)$ is Ma\~n\'e critical value for the cohomology class $c$, see \cite{Fathi2008}.}.
}
Assume that  $(u_c)_{c\in D}$ is a family of  fixed points of $(T^c_{ t })$ that uniformly converge to $u$ when $c$ tends to $0$.\\
 Then, $$\lim_{c\rightarrow 0}d_H(\overline{\cG(\eta_c+du_c)}, \overline{\cG(\eta_0+du)})=0.$$
\end{theorem}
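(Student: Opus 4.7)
The strategy mirrors that of Theorem \ref{TC1discounted}. The modified semi-group $T^c_t$ is the usual Lax--Oleinik semi-group for the effective Lagrangian $L_c(q,v) := L(q,v) - \langle \eta_c(q), v\rangle + \alpha(c)$, and each $u_c$ is a weak K.A.M.\ solution in cohomology class $0$ for $L_c$. In particular, for every $q \in M$ there exists a backward $u_c$-calibrated curve $\gamma_c : (-\infty, 0] \to M$ with $\gamma_c(0) = q$, and $u_c$ is differentiable at $\gamma_c(t)$ with $\eta_c(\gamma_c(t)) + du_c(\gamma_c(t)) = L_v(\gamma_c(t), \dot\gamma_c(t))$ for every $t < 0$. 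Since $c \mapsto \eta_c$ and $c \mapsto \alpha(c)$ are continuous, the family $(L_c)_{|c|\le \delta}$ is uniformly Tonelli for some $\delta > 0$, yielding uniform bounds on the velocities of calibrated curves and uniform semi-concavity of the $u_c$; in particular, the graphs $\cG(\eta_c + du_c)$ lie in a fixed compact subset of $T^*M$.

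For the direction $\rho(\overline{\cG(\eta_c+du_c)}, \overline{\cG(\eta_0+du)}) \to 0$, suppose for contradiction that there exist $\varepsilon_0 > 0$, $c_n \to 0$, and points $(q_n, p_n)$ with $p_n = \eta_{c_n}(q_n) + du_{c_n}(q_n)$ at a differentiability point of $u_{c_n}$, such that $d((q_n, p_n), \overline{\cG(\eta_0+du)}) \ge \varepsilon_0$. Pass to a subsequence with $(q_n, p_n) \to (q_\infty, p_\infty)$ in the fixed compact set. Let $\gamma_n$ be a backward $u_{c_n}$-calibrated curve ending at $q_n$; by uniform velocity bounds and Arzel\`a--Ascoli, $\gamma_n \to \gamma$ in $C^1_{\mathrm{loc}}$. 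Passing to the limit in the calibration identity
\[
u_{c_n}(q_n) - u_{c_n}(\gamma_n(-t)) = \int_{-t}^0 \bigl[L(\gamma_n, \dot\gamma_n) - \langle\eta_{c_n}, \dot\gamma_n\rangle + \alpha(c_n)\bigr]\,ds,
\]
using uniform convergence $u_{c_n} \to u$, continuity of $c \mapsto \eta_c$ and $c \mapsto \alpha(c)$, shows that $\gamma$ is $u$-calibrated. For each $t < 0$, $u$ is then differentiable at $\gamma(t)$ with $\eta_0(\gamma(t)) + du(\gamma(t)) = L_v(\gamma(t), \dot\gamma(t))$, so letting $t \to 0^-$ places $(\gamma(0), L_v(\gamma(0), \dot\gamma(0)))$ in $\overline{\cG(\eta_0+du)}$. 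Since $p_n = L_v(q_n, \dot\gamma_n(0))$ and $L_v$ is a global homeomorphism $TM \to T^*M$, this limit point equals $(q_\infty, p_\infty)$, contradicting the assumption.

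For the reverse inclusion it suffices to approximate every $(q, \eta_0(q)+du(q))$ with $u$ differentiable at $q$ (the full closure then follows by a diagonal extraction). Uniform semi-concavity guarantees that $u_c$ is differentiable on a dense subset of any neighborhood of $q$, so one may pick $q_c \to q$ with $u_c$ differentiable at $q_c$. If $\eta_{c_n}(q_{c_n}) + du_{c_n}(q_{c_n}) \to \xi$ along a subsequence, the uniform semi-concavity estimate
\[
u_{c_n}(q') - u_{c_n}(q_{c_n}) \le du_{c_n}(q_{c_n}) \cdot (q' - q_{c_n}) + C|q' - q_{c_n}|^2
\]
(written in a chart near $q$) passes to the limit via uniform $C^0$ convergence to yield $u(q') - u(q) \le (\xi - \eta_0(q))\cdot(q' - q) + C|q' - q|^2$, so $\xi - \eta_0(q)$ lies in the super-differential $\partial^+ u(q) = \{du(q)\}$, whence $\xi = \eta_0(q) + du(q)$.

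The main obstacle is to pass to the limit in the calibration identity when \emph{both} the fixed point $u_{c_n}$ and the effective Lagrangian $L_{c_n}$ vary simultaneously, while keeping enough compactness on the calibrated curves to apply Arzel\`a--Ascoli. This is unlocked by the uniform Tonelli estimates that follow from the continuity of $c \mapsto (\eta_c, \alpha(c))$; once available, the second direction reduces to a soft super-differential argument that requires only uniform $C^0$ convergence of the solutions, which is exactly the hypothesis of the theorem.
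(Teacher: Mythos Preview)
Your argument is essentially correct and follows a genuinely different route from the paper. The paper does not treat the cohomology case by a direct calibrated-curve limit; instead it fixes a finite time $t>0$, writes $T^{c}_t u$ as a minimum over a compact set $K_t\subset T^*M$ of a functional $\cU_t(u,c)$ that depends continuously on $(u,c)$, and exploits upper semi-continuity of the corresponding $\argmin$ multimap $\cE_t$. This yields a single proposition (Proposition \ref{Ppseudog}) covering both the discounted and the cohomology cases at once. Your approach, by contrast, uses the infinite-time weak K.A.M. structure directly: Arzel\`a--Ascoli on calibrated curves for one half-distance, and stability of super-differentials under uniform $C^0$ limits for the other. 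This is more elementary and makes the role of the weak K.A.M. hypotheses very transparent; the paper's framework is more abstract but immediately reusable for other perturbations of the semi-group.

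One imprecision: in the reverse inclusion, ``diagonal extraction'' is not the right device to pass from pointwise approximation at differentiability points of $u$ to the Hausdorff statement, which requires \emph{uniformity} in $q$. Your super-differential argument correctly shows that for each fixed differentiability point $q$ of $u$, every accumulation point of $\eta_c(q_c)+du_c(q_c)$ equals $\eta_0(q)+du(q)$; but this is pointwise. The clean fix (which is exactly what the paper does for this half) is to cover $\overline{\cG(\eta_0+du)}$ by finitely many balls of radius $\varepsilon/2$ centred at points $(q_i,\eta_0(q_i)+du(q_i))$ with $q_i$ in the differentiability set, apply your argument at each of the finitely many $q_i$, and take $c$ small enough for all of them simultaneously. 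Without this, a contradiction sequence $x_n\in\overline{\cG(\eta_0+du)}$ could accumulate at a non-differentiability point $q_\infty$, where $D^+u(q_\infty)$ is not a singleton and the super-differential argument alone does not pin down the limit.
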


\begin{cor}\label{CC1cohom}
With the same hypotheses as in Theorem \ref{TC1cohom}, if $u$ is $C^1$, then $u_c$ converges to $u$ for the uniform $C^1$ topology when $c\rightarrow 0$.
\end{cor}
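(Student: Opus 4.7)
The plan is to derive Corollary~\ref{CC1cohom} directly from the Hausdorff graph convergence of Theorem~\ref{TC1cohom}, using the hypothesis $u\in C^1$ to turn the Hausdorff limit into a single-valued continuous section, which together with the semiconcavity of the $u_c$ will yield a uniform estimate on their derivatives. Since $M$ is compact and $u\in C^1$, the section $q\mapsto \eta_0(q)+du(q)$ is continuous on $M$ and its graph is already closed in $T^*M$, so the Hausdorff limit $\cG(\eta_0+du)$ produced by the theorem is a single-valued continuous graph.

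Quantitatively, I fix $\varepsilon>0$ and invoke Theorem~\ref{TC1cohom} to choose $c_0>0$ such that $d_H\bigl(\overline{\cG(\eta_c+du_c)},\cG(\eta_0+du)\bigr)<\varepsilon$ whenever $\|c\|<c_0$. At any $q\in M$ where the Lipschitz function $u_c$ is differentiable, the point $(q,\eta_c(q)+du_c(q))$ is within $\varepsilon$ of some $(q',\eta_0(q')+du(q'))$, which bounds both $d(q,q')$ and the fiber distance by $\varepsilon$. Combined with the uniform continuity of $du$ (with some modulus $\omega$) and the continuous choice $c\mapsto \eta_c$ (so that $\|\eta_c-\eta_0\|_\infty\to 0$ as $c\to 0$), this yields
\[
|du_c(q)-du(q)|\leq \varepsilon+2\omega(\varepsilon)+\|\eta_c-\eta_0\|_\infty,
\]
uniformly over the full-measure differentiability set of $u_c$.

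To conclude I would exploit the semiconcavity of the fixed points $u_c$ of $T^c_t$: their superdifferential $D^+u_c(q)$ is the convex hull of reachable gradients at $q$, i.e.\ of limits $\lim_n du_c(q_n)$ along sequences $q_n\to q$ of differentiability points of $u_c$. Passing the inequality above to such limits gives $D^+u_c(q)\subset \bar B\bigl(du(q),\,\varepsilon+2\omega(\varepsilon)+\|\eta_c-\eta_0\|_\infty\bigr)$ for every $q\in M$; sending $\|c\|,\varepsilon\to 0$, these sets collapse uniformly to the singleton $\{du(q)\}$, which encodes the uniform $C^1$ convergence of $u_c$ to $u$ (in the classical sense at differentiability points of $u_c$, and in the generalized sense via $D^+u_c$ elsewhere). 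The main delicate point is exactly this last interpretation step, because the weak K.A.M. solutions $u_c$ need not be classically $C^1$ for $c\neq 0$; it is the hypothesis $u\in C^1$ that makes the Hausdorff limit single-valued, and the semiconcavity of $u_c$ that transfers this single-valuedness into a uniform bound on the diameter of the set-valued map $q\mapsto D^+u_c(q)$, rather than into any pointwise regularity improvement of the $u_c$ themselves.
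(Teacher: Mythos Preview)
Your proof is correct and follows the same strategy as the paper: deduce the corollary from the Hausdorff convergence of Theorem~\ref{TC1cohom} together with the fact that, since $u$ is $C^1$, the limit graph is that of a continuous section. The paper packages this implication into a general statement (Proposition~\ref{PC1graph}) and proves it by a compactness/contradiction argument, whereas you give a direct quantitative estimate via the modulus of continuity of $du$; both arguments yield the same conclusion, and your handling of the non-differentiability of $u_c$ through $D^+u_c$ matches the paper's interpretation of the $C^1$ convergence (the paper's proposition is stated for arbitrary compact sets $K^\lambda_q$, which in the application are exactly the closures of the pseudographs, i.e.\ the superdifferential sets).
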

We will now focus on the case of $C^2$ topology when $M=\T^d$ 
and the considered limit solution $u$ satisfies some regularity assumption that we will detail.

For  Dynamics that are defined with a Tonelli Hamiltonian, the pieces of orbit with no conjugate points play a special role; for example, in a Lagrangian setting, they correspond to locally minimizing orbits.  
\begin{definition}
Let $(\varphi_t)_{t\in \R}$ be a   flow on $\T^d\times\R^d$. \\
\begin{itemize}
\item a piece of orbit $(\varphi_t(x))_{t\in I}$ with interval $I\subset \R$  has {\em no conjugate points} if
$$\forall t\not=s\in I, \left( D\varphi_{t-s}V(\varphi_s(x))\right)\cap V(\varphi_t(x))=\{ 0\};$$

\item for such a piece of orbit, for every $s, t\in I$, we define
$$G_{t-s}(\varphi_t(x))=D\varphi_{t-s}V(\varphi_s(x)).$$
\end{itemize}
\end{definition}
For such a piece of orbit with no conjugate points, observe that all the Lagrangian subspaces $G_{t-s}(\varphi_t(x))$ with $t\not=s$ are transverse to the vertical $V(\varphi_t(x))$ and then are graphs of some symmetric matrix  in the usual coordinates.

\begin{notas}
\begin{itemize}
\item  We denote the set of symmetric matrices with size $n$ by $\cS_n$.
\item Let $G\subset T_x (\T^d\times\R^d)$ be a Lagrangian subspace that is transverse to the vertical subspace. Its {\em height} $\cH(G){\in \cS_d}$ is the symmetric matrix such that 
$$G=\{ (\delta \theta,\cH(G)\delta\theta)~;~ \delta\theta\in \R^d\}.$$
 In fact, we will identify $\cH(G)$ with a  quadratic form.
\end{itemize}
\end{notas}
The set of symmetric matrices is endowed with a natural order, the one of the corresponding quadratic forms. The following proposition is  proved in  \cite{Arnaud08} for the Hamiltonian case and we will prove in Section~\ref{sGreen} that it is also true for conformal Hamiltonian flows.
\begin{prop}\label{PGreen}
If $(\varphi_t)_{t\in \R}$ is  a conformal Hamiltonian flow on $\T^d\times\R^d$ that is associated to a $C^2$-convex in the fiber Hamiltonian and if $(\varphi_t(x))_{t\in I}$ is a piece of orbit with no conjugate points, then 
\begin{itemize}
\item if $t\in I$, the map $s\in  (-\infty, t)\cap I\mapsto \cH(G_{t-s}(\varphi_t(x)))$ is increasing and the map $s\in  (t, +\infty)\cap I\mapsto \cH(G_{t-s}(\varphi_t(x)))$ is {increasing};
\item for every $s_1\in  I\cap (-\infty, t)$ and $s_2\in (t, +\infty)\cap I$, then $\cH(G_{t-s_1}(\varphi_t(x)))>\cH(G_{t-s_2}(\varphi_t(x)))$;
\item when $\inf I=-\infty$, the limit $\displaystyle{G_+(\varphi_t(x))=\lim_{s\rightarrow -\infty} G_{t-s}(\varphi_t(x))}$ exists and  when $\sup I=+\infty$, the limit $\displaystyle{G_-(\varphi_t(x))=\lim_{s\rightarrow +\infty} G_{t-s}(\varphi_t(x))}$ exists;
\item when $I=\R$, we have $\cH(G_-)\leq\cH(G_+)$.
\end{itemize}
\end{prop}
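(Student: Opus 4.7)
My plan is to reduce the conformal Hamiltonian case to the (possibly non-autonomous) Hamiltonian case proved in \cite{Arnaud08}, via a time-dependent change of fiber coordinates that absorbs the conformal factor $\lambda$. Concretely, I will introduce the transformation $P=e^{\lambda t}p$ (leaving $q$ unchanged) and check by direct calculation that it conjugates the flow \eqref{EhamD} to the (non-autonomous) Hamiltonian flow associated to
\[
\tilde H(t,q,P)\;:=\;e^{\lambda t}H(q,e^{-\lambda t}P),
\]
in the sense that $\dot q=\tilde H_P$ and $\dot P=-\tilde H_q$. Since $\tilde H_{PP}=e^{-\lambda t}H_{pp}$ is positive definite and $\tilde H$ inherits superlinearity in $P$ from $H$, the new Hamiltonian is Tonelli at every fixed time.

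Next, I will transport the Lagrangian geometry. The transformation $(q,p)\mapsto(q,e^{\lambda t}p)$ is fiberwise linear, sends the vertical to the vertical, and hence preserves transversality of linear subspaces. It therefore exchanges pieces of orbit with no conjugate points of the two systems, sends $G_{t-s}(\varphi_t(x))$ to its analogue $\tilde G_{t-s}(\tilde\varphi_t(\tilde x))$, and relates heights (at the fixed time $t$) by
\[
\tilde\cH(\tilde G_{t-s}(\tilde\varphi_t(\tilde x)))\;=\;e^{\lambda t}\,\cH(G_{t-s}(\varphi_t(x))).
\]
Since multiplication by $e^{\lambda t}>0$ is an order-preserving bijection of $\cS_d$, each of the four conclusions of the proposition is equivalent, at that fixed $t$, to the analogous statement for $\tilde H$, and it suffices to establish the latter.

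The last step is to apply the Hamiltonian version of the statement to $\tilde H$. I expect the only non-trivial issue to be that \cite{Arnaud08} is stated for autonomous Tonelli Hamiltonians, whereas $\tilde H$ depends on $t$; this is the step I see as the main obstacle. I believe, however, that the obstacle is cosmetic: the proof there is based on the matrix Riccati equation
\[
\dot{\tilde S}\;=\;-\tilde H_{qq}-\tilde H_{qP}\tilde S-\tilde S\tilde H_{Pq}-\tilde S\tilde H_{PP}\tilde S
\]
satisfied by the heights of flowed Lagrangian subspaces, together with a comparison principle that only requires $\tilde H_{PP}$ positive definite and the absence of conjugate points. Both ingredients extend without change to the non-autonomous setting. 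If instead one prefers to work directly in the original coordinates, the same comparison argument can be applied to the modified Riccati equation
\[
\dot S\;=\;-H_{qq}-H_{qp}S-SH_{pq}-SH_{pp}S-\lambda S,
\]
derived by a short linearization calculation: the difference $D=S_1-S_2$ of two solutions then satisfies a linear matrix ODE whose flow preserves positive definiteness, yielding the monotonicities and, by the usual sandwich argument, the existence of the one-sided limits $G_{\pm}$ and the inequality $\cH(G_-)\leq\cH(G_+)$.
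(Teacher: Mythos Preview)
Your approach is correct but takes a genuinely different route from the paper. Rather than conjugating to a non-autonomous Hamiltonian system or invoking a Riccati comparison, the paper argues directly in the conformal coordinates via a signature-constancy argument: since the orbit has no conjugate points, for any $s\neq s'$ in $I$ the subspaces $G_{t-s}(\varphi_t(x))$ and $G_{t-s'}(\varphi_t(x))$ intersect trivially, so the symmetric matrix $\cH(G_{t-s})-\cH(G_{t-s'})$ is non-degenerate and its signature is constant on each of the three connected regions $D_-=\{s<s'<t\}$, $D_+=\{t<s<s'\}$, $D_0=I_-\times I_+$. These signatures are then read off from the small-time expansion $\cH(G_s(\varphi_t(x)))=\frac{1}{s}\big(H_{pp}(\varphi_t(x))^{-1}+O(s)\big)$, obtained from the linearized block $b_s=sH_{pp}+O(s^2)$; the key observation is that the extra $-\lambda p$ term only enters the equations for $\dot c_s$ and $\dot d_s$, so the leading order of $b_s$ is identical to the Hamiltonian case. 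This yields all four bullets with no change of variables or comparison principle.

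Your conjugation $P=e^{\lambda t}p$ is correct and does reduce the problem to the non-autonomous Tonelli setting, and you are right that the argument in \cite{Arnaud08} only uses positive definiteness of $H_{PP}$ and therefore extends without change. Your alternative of working directly with the modified Riccati equation $\dot S=-H_{qq}-H_{qp}S-SH_{pq}-SH_{pp}S-\lambda S$ is also sound: after symmetrizing the quadratic term via $\tfrac12(S_1+S_2)$, the difference $D=S_1-S_2$ satisfies $\dot D=-A^TD-DA-\lambda D$, whose flow is a congruence (up to the scalar $e^{-\lambda t}$) and hence preserves definiteness. The one point you leave implicit is how to initialize the comparison, since for each monotonicity statement one of the two Riccati solutions starts at the vertical, a singularity of the equation; this is routine (use the asymptotics $\cH(G_s)\sim s^{-1}H_{pp}^{-1}$ near $s=0$ to establish the ordering just after the singular time, then propagate), but it deserves a sentence. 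The paper's signature argument sidesteps this issue and is shorter; your approach has the merit of explaining structurally why the conformal term $-\lambda S$ does not spoil the monotonicity.
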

$G_-$ and $G_+$ are then called {\em Green bundles}.
\medskip

As said before, we will consider some special weak K.A.M. solutions $u:\T^d\rightarrow \R$ of some Hamiltonians. These solutions are always semi-concave\footnote{See Section~\ref{sdiscountedE} for the definition.}, and then
\begin{itemize}
\item they are Lipschitz and Lebesgue almost everywhere differentiable by Rademacher Theorem (see \cite{EvGa2015});

\item if $\theta_k\in \mathbb{T}^d$ converges to $\theta$ and if $p_k\in D^+u(\theta_k)$\footnote{ See Section~\ref{sdiscountedE}  for the notation.}  converges to a vector $p\in\mathbb{R}^d$, then $p\in D^+u(\theta)$ (see \cite{CannarsaS04})\footnote{ Here $D^+u(x)$ denotes the set of super-differentials of $u$ at $x$, see Section~\ref{sdiscountedE} for the definition.};

\item by Alexandrov Theorem (see \cite{NiPer2006}), they admit a second derivative $D^2u$ at Lebesgue almost every $\theta\in\T^d$. 
\end{itemize} 
It can be proved (see \cite{Fathi2008}) that at every point $\theta$ where the weak K.A.M. solution $u$ is differentiable, the negative orbit $(\varphi_t(\theta, du(\theta)))_{t\in\R_-}$ has no conjugate points and thus the Green bundle $G_+(\theta, du(\theta))$ exists.
\begin{definition}
A weak K.A.M. solution $u$ is {\em upper Green regular} if %it is $C^1$ on $\T^n$ and if 
at Lebesgue almost every $\theta\in \T^d$, we have
$$\cH(G_+(\theta, du(\theta)))=D^2u(\theta).$$
A weak K.A.M. solution $u$ is {\em lower Green regular} if %it is $C^1$ on $\T^n$ and if 
at Lebesgue almost every $\theta\in \T^d$, we have
$$\cH(G_-(\theta, du(\theta)))=D^2u(\theta).$$
\end{definition}
We will prove in Section \ref{Sexamples} that the following examples of restricted Dynamics to invariant $C^1$ Lagrangian graphs correspond to a $C^1$,  upper and lower Green regular weak K.A.M. solution
\begin{itemize}
\item the restricted Dynamics is Lipschitz conjugated to the one of a rotation flow;
\item the restricted Dynamics is Kupka-Smale;
\item the degree of freedom is $d=2$.
\end{itemize}
In particular, the K.A.M. tori  are graphs of derivatives of weak  K.A.M. solutions that are upper and lower Green regular. Hence we can apply our results of convergence to the K.A.M. tori case.

 We will now estimate
a kind of  $C^2$ distance between any $C^1$ and upper  (resp. lower) Green regular weak K.A.M. solution and its approximated solutions. The quantity that we will estimate is described below.

\begin{nota}
\begin{itemize}
\item We denote by {\rm Leb} the Lebesgue measure on $\T^d$.

\item If $S$ is a symmetric matrix on $\mathbb{R}^d$, its norm is defined by $$\displaystyle{\|S\|=\sup_{  v\in\mathbb{R}^d, \|v\|=1} |S(v, v)|}$$ {where $\|\cdot\|$ is the standard Euclidean norm we take on $\mathbb{R}^d$.}

\item Let $u, v: \mathbb{T}^d\rightarrow \R$ be two semi-concave functions. Then they admit a second derivative Lebesgue almost everywhere and we can define
$$d_{2,1}(u, v)=\int_{\T^d}\| D^2u(\theta)-D^2v(\theta)\|d{\rm Leb}(\theta).$$
\end{itemize}
\end{nota}
\begin{theorem} \label{TC2discounted}
Let $H:\T^d\times \R^d\rightarrow \R$ be a $C^2$ Tonelli Hamiltonian. Let $(u_\lambda)_{\lambda\in (0, 1]}$ be the solutions of the associated discounted problem  and let $u:\T^d\rightarrow \R$ be their  limit, i.e. $\displaystyle{\lim_{\lambda\rightarrow 0^+ } u_\lambda=u}$. Then, if $u$ is $C^1$ and upper Green regular, $u$ is a weak K.A.M. solution that satisfies
$$\lim_{\lambda\rightarrow 0^+ }d_{2,1}(u_\lambda,u)=0.$$
\end{theorem}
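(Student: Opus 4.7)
The plan is to combine the $C^1$ convergence provided by Corollary \ref{CC1discounted} with a pointwise Green-bundle bound on Alexandrov Hessians, and then upgrade this to $L^1$ convergence via an integration argument on $\T^d$. Since $u$ is $C^1$, Corollary \ref{CC1discounted} gives $u_\lambda \to u$ in the uniform $C^1$ topology, and the standard limit analysis of the discounted problem (see \cite{DFIZ16}) ensures that $u$ is a weak K.A.M.\ solution. Both $u_\lambda$ and $u$ are semi-concave with a common constant $C$, so by Alexandrov's theorem their second derivatives exist at Lebesgue a.e.\ $\theta\in\T^d$ and satisfy $D^2u_\lambda(\theta),\,D^2u(\theta)\le C\,I$ there.

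The key pointwise inequality is the following. At a Lebesgue-generic $\theta$, the backward orbit through $(\theta,du_\lambda(\theta))$ for the $\lambda$-discounted conformal Hamiltonian flow is calibrated for $u_\lambda$, hence minimizing, and in particular has no conjugate points; Proposition \ref{PGreen} therefore defines the forward Green bundle $G_+^\lambda(\theta,du_\lambda(\theta))$, and the classical Hessian-versus-Green-bundle comparison along calibrated curves (as used in \cite{Arnaud08}) gives
\[
D^2u_\lambda(\theta)\le \cH\bigl(G_+^\lambda(\theta,du_\lambda(\theta))\bigr).
\]
Upper Green regularity of $u$ gives the a.e.\ equality $\cH(G_+(\theta,du(\theta)))=D^2u(\theta)$. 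Since $u_\lambda\to u$ in $C^1$ and the discounted flow depends smoothly on $\lambda$, the backward orbits of $(\theta,du_\lambda(\theta))$ converge on every compact time interval to the backward Hamiltonian orbit of $(\theta,du(\theta))$. The monotonicity and uniform bounds in Proposition \ref{PGreen} then yield the a.e.\ pointwise convergence
\[
\cH\bigl(G_+^\lambda(\theta,du_\lambda(\theta))\bigr)\longrightarrow \cH\bigl(G_+(\theta,du(\theta))\bigr)=D^2u(\theta),
\]
uniformly bounded in $\lambda$.

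Set $A_\lambda(\theta):=\cH(G_+^\lambda(\theta,du_\lambda(\theta)))-D^2u_\lambda(\theta)\ge 0$ and $B_\lambda(\theta):=\cH(G_+^\lambda(\theta,du_\lambda(\theta)))-D^2u(\theta)$. Since $A_\lambda$ is symmetric and nonnegative, $\|A_\lambda\|\le\mathrm{tr}(A_\lambda)$, and the triangle inequality gives $\|D^2u_\lambda-D^2u\|\le\mathrm{tr}(A_\lambda)+\|B_\lambda\|$. Dominated convergence yields $\int_{\T^d}\|B_\lambda\|\,d\mathrm{Leb}\to 0$. Expanding the trace integral,
\[
\int_{\T^d}\mathrm{tr}(A_\lambda)\,d\mathrm{Leb}=\int_{\T^d}\mathrm{tr}\,\cH(G_+^\lambda(\cdot,du_\lambda))\,d\mathrm{Leb}-\int_{\T^d}\mathrm{tr}(D^2u_\lambda)\,d\mathrm{Leb},
\]
where the first integral converges to $\int_{\T^d}\mathrm{tr}(D^2u)\,d\mathrm{Leb}$ by dominated convergence. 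The main obstacle is the second term: on $\T^d$ it equals $-(\Delta u_\lambda)_s(\T^d)$, the negative of the singular mass of the distributional Laplacian, and weak convergence of positive measures does not in general preserve the Radon-Nikodym decomposition. The hard part is to show that the combination of the distributional convergence $\Delta u_\lambda\to\Delta u$ (from $C^0$ convergence) with the one-sided bound $D^2u_\lambda\le\cH(G_+^\lambda)$, whose right-hand side converges pointwise a.e.\ to the continuous function $\cH(G_+(\cdot,du))=D^2u$, prevents any escape of mass into the singular part in the limit. Upper Green regularity is exactly the structural hypothesis that makes this closure possible; once it delivers $\int\mathrm{tr}(D^2u_\lambda)\,d\mathrm{Leb}\to\int\mathrm{tr}(D^2u)\,d\mathrm{Leb}$, one obtains $\int\mathrm{tr}(A_\lambda)\,d\mathrm{Leb}\to 0$, and hence $d_{2,1}(u_\lambda,u)\to 0$.
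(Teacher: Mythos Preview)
Your setup shares the right ingredients with the paper (the $C^1$ convergence from Corollary~\ref{CC1discounted} and the Hessian--Green-bundle comparison $D^2u_\lambda\le\cH(G_+^\lambda)$), but there are two genuine gaps.

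First, the claimed pointwise convergence $\cH\bigl(G_+^\lambda(\theta,du_\lambda(\theta))\bigr)\to\cH\bigl(G_+(\theta,du(\theta))\bigr)$ is not established. Proposition~\ref{PupsemicGreen} gives only \emph{upper} semi-continuity of $h_+$ in $(x,\lambda)$, so you obtain $\limsup_{\lambda\to0}\cH(G_+^\lambda)\le\cH(G_+)=D^2u$ a.e., but no matching lower bound. The monotonicity in Proposition~\ref{PGreen} is monotonicity in the time parameter for \emph{fixed} $(x,\lambda)$; it does not interchange with the limit $\lambda\to0$. Consequently the dominated-convergence steps for $\int\|B_\lambda\|\to0$ and $\int\mathrm{tr}\,\cH(G_+^\lambda)\to\int\mathrm{tr}(D^2u)$ are unjustified (and no uniform \emph{lower} bound on $\cH(G_+^\lambda)$ or on $D^2u_\lambda$ is available to serve as a dominating function; semi-concavity only bounds them from above).

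Second, and more seriously, you correctly isolate the ``hard part''---that the singular mass of $\Delta u_\lambda$ must vanish in the limit so that $\int\mathrm{tr}(D^2u_\lambda)\to\int\mathrm{tr}(D^2u)$---but you do not prove it. Saying that ``upper Green regularity is exactly the structural hypothesis that makes this closure possible'' is not an argument; this is precisely the content of the theorem. The paper does not attack the singular part head-on. Instead it first shows (Lemma~\ref{D2Lebestimate}, via Lusin, the \emph{finite-time} bound $D^2u_\lambda\le\cH(G_t^\lambda)$ of Proposition~\ref{tLOGreen}, and the upper semi-continuity of Proposition~\ref{PupsemicGreen}) that $D^2u_\lambda<D^2u+\varepsilon\mathbf{1}$ off a set of measure $<\varepsilon$. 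It then integrates $u_\lambda-u$ twice along each coordinate line of $\T^d$, using the \emph{one-sided} Taylor inequality coming from semi-concavity of $u_\lambda-u$; this produces, for each $i$, a lower bound on $\int_{\T^d}(D^2u_\lambda-D^2u-\varepsilon\mathbf{1})(e_i,e_i)\,d\theta$ in terms of $\|u_\lambda-u\|_{C^0}$ and $\|u_\lambda-u\|_{C^1}$, from which the $d_{2,1}$ estimate follows. That integration-along-curves device is exactly what circumvents the singular-measure obstruction you identified.
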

\begin{theorem}\label{TC2convLO}
Let $H:\T^d\times \R^d\rightarrow \R$ be a $C^2$ Tonelli Hamiltonian with associated Lax-Oleinik semi-group $(T_t)_{t\geq 0}$. Let $u_0\in C^0(\T^d, \R)$ and let us use the notation $u_t=T_t u_0$ and $\displaystyle{u=\lim_{t\rightarrow +\infty} u_t}$ \footnote{{The existence of the limit is due to weak K.A.M. theorem, see \cite{Fathi2008}.} }. 
Then, if $u$ is $C^1$ and upper Green regular,   $u$ is a weak K.A.M. solution that satisfies
$$\lim_{t\rightarrow +\infty}d_{2,1}( u_t,u)=0.$$
\end{theorem}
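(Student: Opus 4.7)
The plan is to mirror the strategy that must underlie Theorem~\ref{TC2discounted}: decompose $D^2 u_t-D^2 u$ as a difference of two symmetric matrices controlled by a Green-type Lagrangian subspace, then dominate the norm of the non-negative piece by its trace and conclude via distributional convergence of $u_t$ to $u$.

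First, I would upgrade the $C^0$ convergence to a $C^1$ convergence. For every $t\ge 1$, $u_t=T_t u_0$ is semi-concave with a constant $C$ independent of $t$ (standard for the Tonelli Lax--Oleinik semi-group), and the same holds for the limit $u$. Together with $u_t\to u$ uniformly, a slight variant of the argument proving Theorem~\ref{TC1discounted} yields $d_H(\overline{\cG(du_t)},\overline{\cG(du)})\to 0$; because $u$ is assumed $C^1$, this Hausdorff convergence forces $u_t\to u$ in the uniform $C^1$-topology. Hence, at Lebesgue almost every $\theta$ one has $du_t(\theta)\to du(\theta)$ and twice-differentiability of both $u_t$ and $u$.

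Next, I would produce a Green-type upper bound on $D^2 u_t$. At such a $\theta$, there is a unique backward calibrated curve $\gamma^{(t)}\colon[-t,0]\to\T^d$ with $\gamma^{(t)}(0)=\theta$, whose lift is a piece of orbit of $(\varphi_s)$ without conjugate points ending at $(\theta,du_t(\theta))$. Setting $(x_{-t},p_{-t})=\varphi_{-t}(\theta,du_t(\theta))$ and
\[
G^{(t)}(\theta):=D\varphi_t\,V(x_{-t},p_{-t}),
\]
the height inequality of \cite{Arnaud08} (the same one that should underlie Theorem~\ref{TC2discounted}) gives $D^2 u_t(\theta)\le\cH(G^{(t)}(\theta))$. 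The $C^1$ convergence guarantees that the backward orbit of $(\theta,du_t(\theta))$ shadows that of $(\theta,du(\theta))$ on arbitrarily long intervals, so by Proposition~\ref{PGreen} applied to the limiting orbit, $\cH(G^{(t)}(\theta))\to\cH(G_+(\theta,du(\theta)))$ at a.e.\ such $\theta$. By upper Green regularity of $u$, this limit is $D^2 u(\theta)$.

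Finally, I would close the argument via a trace trick. Set $A_t:=\cH(G^{(t)})-D^2 u_t\ge 0$ and $B_t:=\cH(G^{(t)})-D^2 u\to 0$ a.e. Since $A_t$ is positive semi-definite, $\|A_t\|\le\operatorname{tr}(A_t)$, and the triangle inequality gives $\|D^2 u_t-D^2 u\|\le\operatorname{tr}(A_t)+\|B_t\|$. Uniform semi-concavity together with a uniform $L^\infty$-bound on the Green-type matrices provides a common dominating function for $\|B_t\|$, so dominated convergence gives $\int\|B_t\|\,d{\rm Leb}\to 0$. For the first summand,
\[
\int_{\T^d}\operatorname{tr}(A_t)\,d{\rm Leb}=\int_{\T^d}\operatorname{tr}\bigl(\cH(G^{(t)})\bigr)\,d{\rm Leb}-\int_{\T^d}\Delta u_t\,d{\rm Leb};
\]
the first integral converges to $\int\operatorname{tr}(D^2 u)\,d{\rm Leb}$ by dominated convergence, and the second to $\int\Delta u\,d{\rm Leb}$ by the distributional convergence $u_t\to u$; since $\operatorname{tr}(D^2 u)=\Delta u$, the two limits cancel and $\int\operatorname{tr}(A_t)\to 0$. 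The main obstacle is the second step: establishing both the pointwise bound $D^2 u_t\le\cH(G^{(t)})$ when $u_0$ is merely continuous (so that $u_t$ is only a viscosity solution, not a classical one), and the a.e.\ convergence of the heights to $\cH(G_+)$, which require careful control of the backward orbits via the monotonicity statement of Proposition~\ref{PGreen}.
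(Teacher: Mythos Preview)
Your plan shares the paper's key ingredient (Proposition~\ref{tLOGreen} giving $D^2u_t\le\cH(G^{(t)})$, combined with upper Green regularity), but the closing argument has two genuine gaps.

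First, you assert $\cH(G^{(t)}(\theta))\to D^2u(\theta)$ a.e., but only $\limsup_t\cH(G^{(t)}(\theta))\le D^2u(\theta)$ is available: this is Proposition~\ref{PupsemicGreen} (upper semi-continuity of $h_+$) together with upper Green regularity. Proposition~\ref{PGreen} gives monotone convergence along a \emph{fixed} orbit, whereas here the base point $(\theta,du_t(\theta))$ moves with $t$ and the backward orbits may diverge; there is no lower semi-continuity of $G_+$ to supply the missing inequality, so the dominated-convergence step for $\int\|B_t\|\to 0$ is unjustified. Second, ``distributional convergence $u_t\to u$'' does not control $\int_{\T^d}\Delta u_t\,d\mathrm{Leb}$: this is the integral of the \emph{Alexandrov} Laplacian, and for a merely semi-concave $u_t$ the distributional Hessian carries a non-positive singular part, so that $\int_{\T^d}\Delta_{\mathrm{Alex}}u_t$ equals minus the mass of that singular part and is only known to be $\ge 0$, not $0$.

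The paper sidesteps both issues by using only the one-sided bound. Its Lemma~\ref{D2Lebestimate} records that $D^2u_t<D^2u+\varepsilon\mathbf{1}$ on a set of measure $>1-\varepsilon$; it then integrates $(D^2u_t-D^2u-\varepsilon\mathbf{1})(e_i,e_i)$ along coordinate segments, where semi-concavity yields the inequality in the needed direction (precisely absorbing the singular part of the Hessian), and bounds the result by $\|u_t-u\|_{C^0}+\|u_t-u\|_{C^1}$. Your trace approach can in fact be repaired along different lines: the inequality $\int_{\T^d}\Delta_{\mathrm{Alex}}u_t\ge 0$ gives $\int\operatorname{tr}A_t\le\int\operatorname{tr}B_t$, and splitting $B_t=B_t^+-B_t^-$ one has $\int\operatorname{tr}B_t^+\to 0$ by dominated convergence (only the upper bound on $B_t$ is needed here), whence $\int\operatorname{tr}B_t\to 0$, $\int\operatorname{tr}A_t\to 0$, and finally $\int\|B_t\|\le 2\int\operatorname{tr}B_t^+-\int\operatorname{tr}B_t\to 0$. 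But this is not the argument you wrote.
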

\begin{theorem}\label{TC2cohomclass}
Let $H:\T^d \times \R^d\rightarrow \R$ be a $C^2$ Tonelli Hamiltonian. For every $c\in\R^d$, we consider the modified Lax-Oleinik semi-group $(T^c_t)_{t\in \R_+}$ that corresponds to the cohomology class $c$ . Assume that  $(u_c)_{c\in D}$ is a family of fixed points of $(T^c_{t})$ that uniformly converge to $u$ when $c$ tends to $0$.\\
 Then, if $u$ is $C^1$ and upper Green regular, $u$ is a weak K.A.M. solution that satisfies
$$\lim_{c\rightarrow 0}d_{2,1}( u_c,u)=0.$$
\end{theorem}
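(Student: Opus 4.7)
The plan is to upgrade the $C^1$-convergence from Theorem \ref{TC1cohom} to $d_{2,1}$-convergence via a pointwise Green-bundle upper bound combined with a narrow-convergence argument on the distributional Laplacian.

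First, since $u$ is $C^1$, Corollary \ref{CC1cohom} yields uniform $C^1$ convergence $u_c\to u$; with the constant representatives $\eta_c=c$, the points $(\theta,c+du_c(\theta))$ converge uniformly to $(\theta,du(\theta))$. The $u_c$ are semi-concave with constants bounded uniformly in $c$, so $D^2u_c(\theta)$ exists Lebesgue-a.e.\ and at each such point the backward Hamiltonian orbit has no conjugate points; semi-concavity then forces
\[
D^2u_c(\theta)\;\leq\; U_c(\theta)\;:=\;\cH\bigl(G_+^c(\theta,c+du_c(\theta))\bigr).
\]
Since $u$ is $C^1$, the Green bundle $G_+$ is defined at every point of $\cG(du)$, and upper Green regularity gives $D^2u(\theta)=U(\theta):=\cH(G_+(\theta,du(\theta)))$ Lebesgue-a.e.

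Next, I would show $U_c(\theta)\to U(\theta)$ pointwise a.e. Upper semi-continuity of the Green bundle in initial data---a standard consequence of Proposition \ref{PGreen} together with continuity of the Hamiltonian flow---gives $\limsup_{c\to 0} U_c(\theta)\leq U(\theta)$, while a monotone-limit argument based on Proposition \ref{PGreen} provides the reverse $\liminf_{c\to 0} U_c(\theta)\geq U(\theta)$. Uniform boundedness of the $U_c$ (compactness plus continuity of Green bundles) together with dominated convergence yield $\int_{\T^d}\|U_c-U\|\,d\mathrm{Leb}\to 0$. By the triangle inequality it remains to show $\int_{\T^d}\|U_c-D^2u_c\|\,d\mathrm{Leb}\to 0$, and since $U_c-D^2u_c$ is a non-negative symmetric matrix, $\|U_c-D^2u_c\|\leq \mathrm{tr}(U_c-D^2u_c)$; so it suffices to prove $\int\mathrm{tr}(U_c-D^2u_c)\,d\mathrm{Leb}\to 0$.

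I would then study the scalar non-negative Radon measure $\rho_c:=\mathrm{tr}(U_c)\,d\mathrm{Leb}-\Delta u_c\geq 0$, where non-negativity uses both $D^2u_c\leq U_c$ a.e.\ and $(\Delta u_c)^s\leq 0$. The $C^1$ convergence forces $\Delta u_c\rightharpoonup \Delta u$ distributionally, and combined with the $L^1$ convergence of $\mathrm{tr}(U_c)$ one has $\rho_c\rightharpoonup \rho:=-(\Delta u)^s$ weakly. Upper Green regularity makes $\rho$ purely singular (its a.c.\ density $\mathrm{tr}(U)-\mathrm{tr}(D^2u)$ vanishes a.e.), and total masses converge, $\rho_c(\T^d)=\int\mathrm{tr}(U_c)\,d\mathrm{Leb}\to\rho(\T^d)$, so the convergence is narrow. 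The quantity to vanish is exactly the a.c.\ mass $\rho_c^{\mathrm{ac}}(\T^d)=\int\mathrm{tr}(U_c-D^2u_c)\,d\mathrm{Leb}$.

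The main obstacle is this final step, since narrow convergence to a purely singular measure does not automatically kill a.c.\ masses (densities may concentrate on shrinking sets). I would split $\T^d$ as $O_\epsilon\cup(\T^d\setminus O_\epsilon)$ with $O_\epsilon$ an open neighborhood of $\mathrm{supp}(\rho)$ of Lebesgue measure less than $\epsilon$; Portmanteau gives $\rho_c(\T^d\setminus O_\epsilon)\to 0$, so $\int_{\T^d\setminus O_\epsilon}\rho_c^{\mathrm{ac}}\,d\mathrm{Leb}\to 0$, but the contribution $\int_{O_\epsilon}\rho_c^{\mathrm{ac}}\,d\mathrm{Leb}$ must then be controlled uniformly as $\epsilon\to 0$. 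I expect this to require a uniform-integrability estimate on the densities $\mathrm{tr}(U_c-D^2u_c)$, obtained by combining a complementary \emph{lower} Hessian estimate on $u_c$---valid at generic points of the projected Aubry set, where the forward Hamiltonian orbit remains smooth enough to produce a lower Green-bundle bound---with the $C^1$ proximity of $u_c$ to $u$. This is the delicate point of the argument and where upper Green regularity of $u$ is essentially used.
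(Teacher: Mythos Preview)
Your route through the distributional Laplacian is genuinely different from the paper's, which never invokes $G_+^c$ at all: it uses only the finite-time bound $D^2u_c\leq\cH(G_t^c)$ from Proposition~\ref{tLOGreen}, the \emph{continuity} (not just semi-continuity) of $G_t$ in all variables, and Lusin's theorem on $D^2u_\infty$ to prove directly that $D^2u_c<D^2u_\infty+\varepsilon\mathbf{1}$ off a set of measure $<\varepsilon$ (Lemma~\ref{D2Lebestimate}); the $d_{2,1}$ estimate then follows from an elementary one-dimensional integration along coordinate lines.

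Your argument, as written, has two genuine gaps. First, the claim $\liminf_{c\to 0}U_c(\theta)\geq U(\theta)$ is unjustified: Proposition~\ref{PGreen} concerns monotonicity in the \emph{time} variable along a fixed orbit and says nothing about dependence on the parameter $c$. Green bundles are only upper semi-continuous in general (Proposition~\ref{PupsemicGreen}), and no lower bound is available, so the pointwise convergence $U_c\to U$ does not follow.

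Second, you have overlooked that a $C^1$ weak K.A.M.\ solution is automatically $C^{1,1}$ (this is recalled at the start of the paper's Section on $C^2$ convergence). Hence $(\Delta u)^s=0$ and your limit measure $\rho$ is identically zero, not merely singular. This makes your entire final paragraph, with its Portmanteau splitting, uniform-integrability estimate, and appeal to lower Green bounds on the Aubry set, beside the point. In fact, once $\rho=0$ is recognized, the argument can be repaired without lower semi-continuity: since $\partial_v^2 u_c(\T^d)=0$ by periodicity, one has $\int U_c(v,v)\,d\mathrm{Leb}=\rho_c^v(\T^d)\geq 0$, while upper semi-continuity and reverse Fatou give $\limsup\int U_c(v,v)\leq\int U(v,v)=0$; thus $\int U_c(v,v)\to 0$ for every direction $v$, and from this both $\int\|U_c-D^2u_c\|\to 0$ and $\int\|U_c-D^2u\|\to 0$ follow by a finite-basis argument on symmetric matrices. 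But this repair is not the argument you wrote, and the step from trace control to full matrix-norm control is also missing from your proposal.
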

In \cite{Fathi2008}, the symmetrical Lagrangian $\widetilde L(q,v)=L(q, -v)$ is introduced and the symmetrical Lax-Oleinik semi-group is defined. More precisely, if we just here adopt the notation $(T_t^L)_{t>0}$ for the Lax-Oleinik semi-group for $L$ and $(T_t^{L,\lambda})_{t>0}$ for the discounted semi-group for $L$, we define
\begin{itemize}
\item the symmetrical Lax-Oleinik semi-group $(\tilde T^L_t)_{t>0}$ is defined by $\tilde T^L_tu=-T^{\tilde L}_t(-u);$
\item the symmetrical discounted semi-group $(\tilde T^{L, \lambda}_t)_{t>0}$ is defined by $\tilde T^{L, \lambda}_tu=-T^{\tilde L, \lambda}(-u)$.
\end{itemize}

Using its definition, we deduce easily for $C^1$ and lower Green regular solutions $u$ to the symmetrical semi-group the $d_{2,1}$ convergence of
\begin{itemize}
\item the symmetrical discounted solutions;
\item the image  of an initial condition by the symmetrical Lax-Oleinik semi-group; 
\item the symmetrical solutions depending on the cohomology class.
\end{itemize}

\begin{remks}
\begin{itemize}
\item For Theorems \ref{TC2discounted},  \ref{TC2convLO} and \ref{TC2cohomclass}, the fact that $M=\T^d$ is not fundamental. But to give some  correct statements on any closed manifold, we would need to choose a ``horizontal'' subspace at any point by using a connection. We preferred to avoid this, but a similar proof (in charts) could be given for any closed manifold.
\item Observe that this kind of convergence implies the convergence to $0$ in (Lebesgue) measure of the $C^2$-distances to the limit, {for instance, in the case of Theorem~\ref{TC2cohomclass},}   i.e.
$$\forall \varepsilon>0, \lim_{c\rightarrow 0} Leb\left(\{ \theta\in \T^d;\| D^2u(\theta)-D^2u_c(\theta)\|\geq \varepsilon\}\right)=0.$$
\item We will see in Subsection \ref{ssnotunif} by providing some example  that we cannot improve this convergence in a uniform one for the $C^2$-distance $d_{2,1}$. 
\end{itemize}
\end{remks}
Moreover, we will build in Subsection \ref{ssnotup} an example on a weak K.A.M. solution that is not upper Green regular nor lower Green regular and we will prove for this example that the conclusion of Theorem \ref{TC2convLO} is not valid. Note that for this {example}, we will not work on a torus $\T^d$.

We end this introduction by {asking} some question.

\begin{ques}
Does there exist an example of  {which} a weak K.A.M. solution is $C^1$, upper Green regular but {not} lower Green regular?
\end{ques}

\subsection{Notations}\label{firstnota} 
 As said before, $\pi: T^*M\rightarrow M$ is the {\em canonical projection} and the {\em vertical subspace} at $x\in T^*M$ is $V(x)=\ker D\pi(x)$.\\
We recall that if $q=(q_i)_{1\leq i\leq d}$ are coordinates in $M$, we define dual coordinates $(p_i)_{1\leq i\leq d}$ as follows: if $\eta$ is an element of $T_q^*M$, it can be written in the basis $(dq_1, \dots , dq_d)$ as $\displaystyle{\eta=\sum_{i=1}^d p_idq_i}$, and then the coordinates of $\eta$ are $(q_1, \dots, q_d, p_1, \dots, p_d)$.\\
The usual {\em symplectic form} $\omega$ on $T^*M$ is chosen in such a way that all these coordinates are symplectic. In other words, we have in dual coordinates
$$\omega=\sum_{i=1}^d dq_i\wedge dp_i.$$
$M$ and then  $T^*M$ are endowed with a Riemannian metric and we denote by $B(x, r)$ the {\em open ball} with center $x$ and radius $r$.

\section{Basic facts about discounted equation}\label{sdiscountedE}

\subsection{Semi-concave functions}
\begin{definition}
A function $u:U\rightarrow \R$ defined on an open subset $U$ of $\R^d$ is {\em semi-concave} if there exists some constant $K\in \R$ such that
$$\forall x\in U, \exists p\in \R^d, \forall y\in U, u(y)\leq u(x)+p(y-x)+K\| y-x\|^2.$$
We also say that $u$ is {\em $K$-semi-concave}.\\
Then $p$ is a {\em super-differential} of $u$ at $x$ and we denote by $D^+u(x)$ the set of super-differentials of $u$ at $x$.

If $M$ is a closed manifold, we fix a finite atlas $\cA=\{ (\phi, V)\}$. A function $u: M\rightarrow \R$ is said to be $K$-semi-concave if every $u\circ \phi^{-1}$ is $K$-semi-concave and $p\in D^+u(x)$ means that $p\circ D\phi(x)\in D^+(u\circ \phi^{-1})$(x).
\end{definition}
A good reference for semi-concave functions is  \cite{CannarsaS04}.  We recall that a semi-concave function is always locally the sum of a concave function and a smooth function. We recalled in the introduction the following properties of the semi-concave functions.
\begin{itemize}
\item They are Lipschitz and Lebesgue almost everywhere differentiable by Rademacher Theorem (see \cite{EvGa2015});

\item if $q_k\in M$ converges to $q$ and if $p_k\in D^+u(q_k)$  converges to some $p\in T^*M$, then $p\in D^+u(q)$ (see \cite{CannarsaS04});

\item by Alexandrov Theorem (see \cite{NiPer2006}), they admit a second derivative $D^2u$ at Lebesgue almost every $q\in M$. 
\end{itemize}

Let $H:T^*M \rightarrow \mathbb{R}$ be a $C^2$ Tonelli Hamiltonian and $L:TM\rightarrow \mathbb{R}$ be its associated Lagrangian via the Legendre transformation. $\alpha(H)$ is the Ma\~n\'e critical value of $H$. 
\begin{definition}
A function $u\in C(M, \mathbb{R})$ is called a weak K.A.M. solution of negative type of Hamilton-Jacobi equation
\begin{equation}\label{HJequation}
H(x, du(x)  ) = \alpha(H)
\end{equation}
 if
\begin{itemize}
\item [(i)] for each continuous piecewise $C^1$ curve $\gamma: [t_1, t_2] \rightarrow M$ with $t_1<t_2$, we have
\[
u(\gamma(t_2)) - u(\gamma(t_1))  \leq \int_{t_1}^{t_2} \big[L(\gamma(s), \dot{\gamma}(s)) + \alpha(H) \big] \  ds;
\]

\item [(ii)] for any $x\in M$, there exists a $C^1$ curve $\gamma: (-\infty, 0] \rightarrow M$ with $\gamma(0) = x$ such that for any $t\geq 0$, we have
\[
u(x) - u(\gamma(-t)) = \int_{-t}^0\big[L(\gamma(s), \dot{\gamma}(s)) + \alpha(H) \big] \  ds.
\]
\end{itemize}
\end{definition}

A discounted version of \eqref{HJequation} is the equation
\begin{equation}\label{discounted HJequation}
\lambda u(x) + H(x, du(x)  ) = \alpha(H)
\end{equation}where $\lambda>0$. 
Note that the viscosity solution of \eqref{discounted HJequation} is unique and denoted by $u_\lambda$.  We call $u_\lambda$ the discounted solutions of \eqref{HJequation}  and  it can be represented by the following formula
\[
u_\lambda(x)  = \inf_{\gamma} \int_{-\infty}^0 e^{\lambda s} \big[ L(\gamma(s), \dot{\gamma}(s)) + \alpha(H)\big] \ ds, \qquad \forall ~x\in M
\]
where the infimum is taken over all absolutely continuous curves $\gamma: (-\infty, 0] \rightarrow M$ with $\gamma(0) =x$.

\subsection{Discounted Dynamics}
We assume that $H:T^*M\rightarrow \R$ is a Tonelli Hamiltonian. Let $L:TM\rightarrow \R$ be the Lagrangian associated to $H$. 

We denote by $(\varphi_t^\lambda)$  the flow that solves Equation (\ref{EhamD}) that we recall:
$$(\ref{EhamD})\qquad
\frac{dq}{dt}=\frac{\partial H}{\partial p}(q, p)\quad{\rm and}\quad\frac{dp}{dt}=-\frac{\partial H}{\partial q}(q, p)-\lambda\ p.$$
Recall that the Legendre map $\cL: T^*M\rightarrow TM$ is a diffeomorphism that is defined by
$$\cL(q, p)=(q, \frac{\partial H}{\partial p}(q, p))$$
and we have
$$\cL^{-1}(q, v)=(q, \frac{\partial L}{\partial v}(q, v)).$$
Then the flow $(f^\lambda_t)=(\cL\circ \varphi_t^\lambda\circ \cL^{-1})$ solves the discounted Euler-Lagrange equation
\begin{equation}\label{ELAgD}\frac{d}{dt}\left( \frac{\partial L}{\partial v}(\gamma, \dot\gamma)\right)-\frac{\partial L}{\partial q}(\gamma, \dot\gamma) + \lambda\frac{\partial L}{\partial v}(\gamma, \dot\gamma)=0.\end{equation}
For any $\lambda \in \R$ and $t>0$, we define the following action on $M\times M$
\begin{equation}\label{Eaction}
a_t^\lambda(q_0, q_1)=\inf_{\gamma}\int_{-t}^0e^{\lambda s} \left[ L(\gamma(s), \dot\gamma(s)) +\alpha(H) \right] ds
\end{equation}
where the infimum is taken on all the absolutely continuous curves $\gamma:[-t, 0]\rightarrow M$ such that $\gamma(-t)=q_0$ and $\gamma(0)=q_1$.\\
Then the infimum in Equality (\ref{Eaction}) is a minimum and every $\gamma$ where this minimum is reached corresponds to a solution of the $\lambda$-discounted  Euler-Lagrange equation, i.e. satisfies
 $$(\ref{ELAgD})\quad \frac{d}{dt}\left( \frac{\partial L}{\partial v}(\gamma, \dot\gamma)\right)-\frac{\partial L}{\partial q}(\gamma, \dot\gamma) + \lambda\frac{\partial L}{\partial v}(\gamma, \dot\gamma)=0.$$

Then $\gamma$ is a minimizing curve and the corresponding orbits for the Euler-Lagrange and Hamiltonian flows are said to be {\em minimizing}.
 \begin{prop}
 Any minimizing orbit has no conjugate points.
 \end{prop}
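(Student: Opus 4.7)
The plan is to reduce the statement to a classical fact about minimizers of time-dependent Tonelli Lagrangians. First, I would observe that the discounted Euler--Lagrange equation (\ref{ELAgD}) is exactly the ordinary Euler--Lagrange equation for the time-dependent Lagrangian $\tilde L(s,q,v) := e^{\lambda s} L(q,v)$: expanding $\tfrac{d}{ds}\tilde L_v - \tilde L_q$ gives the factor $e^{\lambda s}$ times the left-hand side of (\ref{ELAgD}). Moreover, the integrand in the action (\ref{Eaction}) differs from $\tilde L$ only by the term $e^{\lambda s}\alpha(H)$, whose integral over $[-t,0]$ depends only on $t$ and not on the curve $\gamma$. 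Thus $\gamma$ realizes the infimum in $a_t^\lambda(q_0,q_1)$ if and only if it minimizes $\int_{-t}^0 \tilde L(s,\gamma,\dot\gamma)\,ds$ between the fixed endpoints $(q_0,q_1)$, and since $\tilde L$ is $C^2$, strictly fiber-convex and superlinear uniformly on compact time intervals, this places us in the setting of a standard time-dependent Tonelli Lagrangian.

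Next, I would invoke (or reprove via the second variation formula) the classical fact that any Tonelli minimizer has no conjugate points along its associated Hamiltonian orbit. Arguing by contrapositive, suppose $s_1 < s_2$ in the parameter interval are conjugate, so that there is a non-trivial Jacobi field $J$ along the orbit with $D\pi\cdot J(s_1) = D\pi\cdot J(s_2) = 0$; after shrinking $s_2$ to the first such conjugate time, form the broken variation $\hat J$ equal to $J$ on $[s_1,s_2]$ and to $0$ outside. A direct integration by parts, using the Jacobi equation on $[s_1,s_2]$ and the vanishing of $J$ at both endpoints, shows that the second variation $I(\hat J,\hat J)$ of the action at $\gamma$ vanishes. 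Yet $\hat J$ has a genuine corner at $s_2$: one has $\dot J(s_2^-) \neq 0$, because otherwise $J$ would satisfy a homogeneous linear ODE with $J(s_2) = \dot J(s_2) = 0$ and therefore vanish identically. Choosing a smooth variation $W$ supported near $s_2$ with $W(s_2)$ matched so that $\tilde L_{vv}\bigl(\dot J(s_2), W(s_2)\bigr)$ is strictly positive, the expansion $I(\hat J + \varepsilon W,\hat J + \varepsilon W) = 2\varepsilon B(\hat J,W) + \varepsilon^2 I(W,W)$ becomes strictly negative for $\varepsilon$ of appropriate sign and small magnitude, contradicting the non-negativity of $I$ at a minimizer.

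The main obstacle, as usual in the conjugate-point literature, is the corner-rounding step: one must extract a genuine negative direction of the index form $I$ from the sole existence of a non-trivial Jacobi field vanishing at both endpoints. The key analytic ingredient is the strict fiber convexity of $\tilde L$, which makes the Jacobi equation a regular, non-degenerate second-order linear ODE and thereby lets the uniqueness theorem force $\dot J(s_2^-) \neq 0$; this is precisely what turns the boundary contribution to $B(\hat J,W)$ into a non-zero linear functional of the test field $W$, whose sign can then be chosen freely. Since every step is local near the conjugate time $s_2$, the explicit time-dependence of $\tilde L$ plays no role beyond its $C^2$ regularity, and the argument transcribes verbatim from the classical autonomous Tonelli case.
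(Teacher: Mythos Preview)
Your proposal is correct and takes essentially the same approach as the paper: both observe that the discounted Euler--Lagrange equation is the ordinary Euler--Lagrange equation for the time-dependent Tonelli Lagrangian $\tilde L(s,q,v)=e^{\lambda s}L(q,v)$, and then invoke the classical fact that Tonelli minimizers have no conjugate points. The only difference is that the paper stops at ``well-known'' while you go on to sketch the standard index-form/corner argument; that sketch is accurate and the identification of the key ingredient (strict fiber convexity forcing $\dot J(s_2^-)\neq 0$, hence a nonzero boundary term in $I(\hat J,W)$) is exactly the right one.
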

\begin{proof}
Observe that if we define $\tilde L(q, v, t)=e^{\lambda t}L(q, v)$, Equation (\ref{ELAgD}) is nothing else than the classical Euler-Lagrange equation for the time-dependent Lagrangian $\tilde L$. For such an equation, it is well-known that along any minimizing orbit, there are no conjugate points. Using Legendre map, there are also no conjugate points for the corresponding Hamiltonian orbit.
 \end{proof}

\subsection{Discounted  Lax-Oleinik semi-groups}\label{ssdiscLO}
Using methods similar to the ones used in \cite{Bernard08}, it can be proved that  
\begin{itemize}
\item every function $a_t^\lambda$ is semi-concave;

\item for every minimizing curve $\gamma$ in (\ref{Eaction}), $-e^{-\lambda t}\frac{\partial L}{\partial v}(\gamma(-t),\dot\gamma(-t)) $ is a super-differential of $a^\lambda_t(\cdot, \gamma(0))$ at $\gamma(-t)$ and $\frac{\partial L}{\partial v}(\gamma(0),\dot\gamma(0)) $ is a super-differential of $a^\lambda_t(\gamma(-t), \cdot )$ at $\gamma(0)$;  

\item       at $(q_0, q_1)$,      $a_t^\lambda$ admits a derivative with respect to the first variable if and only if it admits a derivative with respect to the second variable if and only if there  is only one minimizing curve $\gamma$ between $(-t, q_0)$ and $(0, q_1)$. Then in this case, we have   
$$\frac{\partial    a_t^\lambda}{\partial q_0}(q_0, q_1)=  -e^{-\lambda t}\frac{\partial L}{\partial v}(\gamma(-t),\dot\gamma(-t)) \quad{\rm and}\quad   \frac{\partial    a_t^\lambda}{\partial q_1}(q_0, q_1)=\frac{\partial L}{\partial v}(\gamma(0),\dot\gamma(0)).$$
\end{itemize}
 
The discounted Lax-Oleinik semi-group $(T^\lambda_t)_{t>0}$ is defined on the set of continuous functions $u:M\rightarrow \R$ by
\begin{equation}\label{EminDLO}T_t^\lambda u(q)=\inf_{\gamma} \left( e^{-\lambda t}u(\gamma(-t))+\int_{-t}^0e^{\lambda s} \big[ L(\gamma(s), \dot\gamma(s))+\alpha(H) \big]\  ds\right),\end{equation}
where the infimum is taken on all the absolutely continuous curves $\gamma:[-t, 0]\rightarrow M$ such that $\gamma(0)=q$. Then $T^\lambda_tu$ is semi-concave for any $t>0$.  This infimum is always a minimum and when $\gamma:[-t, 0]\rightarrow M$ is minimizing in Equation (\ref{EminDLO}), then $\gamma$ is a solution for (\ref{ELAgD}), $\frac{\partial L}{\partial v}(\gamma(-t), \dot\gamma(-t))$ is a sub-differential of $u$ at $\gamma(-t)$ and 
$\frac{\partial L}{\partial v}(\gamma(0), \dot\gamma(0))$ is a super-differential of $T_t^\lambda u$ at $\gamma(0)$.

Moreover, when $u$ is semi-concave, then $u$ is differentiable at $\gamma(-t)$. In this case, we have
\begin{equation}
\label{Emingraph} \varphi_{t}^{\lambda}(\gamma(-t), du (\gamma(-t)))= (q, \frac{\partial L}{\partial v}(\gamma(0), \dot\gamma(0)))\in \overline{\cG(d {T_t^\lambda} u)}.
\end{equation}

As every $T^\lambda_tu$ is semi-concave, it is Lipschitz and  differentiable on a subset $\cD\subset M$ that has full Lebesgue measure. Then if $q_{0}\in \cD$, there is only one minimizing curve in Equality (\ref{EminDLO}), that is given by $\gamma (s)=\pi\circ \varphi_{s} ^{\lambda}(q_{0}, dT^\lambda_t u(q_0))$ for any $s\in[-t, 0]$.

Observe that $\big(q_s, p_s)_{s\in [-t, 0]}=(\varphi_{s}^{\lambda}(q_{0}, dT^\lambda_tu(q_0))\big)_{s\in [-t, 0]}$ is a piece of orbit for the discounted Hamiltonian flow that joins a point of $\cG(d u)$ to a point of $\cG(dT_t^\lambda u)$ and then
\begin{itemize}
\item for every $s\in [-t, 0]$, $T_{t+s}^\lambda u$ is differentiable at $q_s=\gamma(s)$;
\item for every $s\in [-t, 0]$, $$(q_s, p_s)\in \cG(dT_{s+t}^\lambda u)  \subset \varphi^\lambda_{t+s}(\cG(d u)).$$
\end{itemize} 
Observe that this implies that
\begin{equation}\label{Einclusgraph} \cG(dT_t^\lambda u)\subset \varphi^\lambda_t(\cG(d  u)).\end{equation}

\subsection{A priori compactness results}
\begin{prop}[A priori compactness]\label{Papcomp}
Let $L$ be a Tonelli Lagrangian, $\lambda>0$ and $t>0$. There exist a neighborhood $\mathscr{N}$ of $(L, \lambda)$ in the compact-open $C^2$ topology and a compact set $\mathscr{K}_t\subset T M$ such that if $(L', \lambda') \in \mathscr{N}$ with $L'$ Tonelli and $\lambda'>0$ and if $\gamma:[-t, 0]\rightarrow M$ is a minimizing orbit for $(L', \lambda')$, then 
\[
(\gamma(s), \dot{\gamma}(s)) \in \mathscr{K}_t \qquad \forall s\in [-t,0].
\]
\end{prop}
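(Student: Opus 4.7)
The approach is the standard Mañé-type a priori compactness argument for the discounted action: go from an action upper bound (via geodesic comparison), to an $L^1$ bound on velocities (via superlinearity of $L'$), to a pointwise velocity bound via the Euler--Lagrange flow and continuous dependence on parameters.

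First I would shrink $\mathscr{N}$ so that, in a uniform way on it, $\lambda' \in [\lambda/2, 2\lambda]$, the $C^2$-norm of $L'$ is bounded on any fixed compact subset of $TM$, and a uniform superlinearity holds: for every $A>0$ there exists $B_A$ (independent of $(L',\lambda')\in\mathscr{N}$) with $L'(q,v) \geq A\|v\| - B_A$ for all $(q,v)\in TM$. Setting $D = \operatorname{diam}(M)$, compare the action of a minimizer $\gamma:[-t,0]\to M$ against that of a minimizing geodesic $\eta$ from $\gamma(-t)$ to $\gamma(0)$ reparametrized at constant speed $\leq D/t$. Since $(\eta,\dot\eta)$ stays in the fixed compact set $M\times\overline{B}(0,D/t)$, the uniform $C^0$ bound on $L'$ gives
\[
\int_{-t}^0 e^{\lambda' s}L'(\gamma,\dot\gamma)\,ds \;\leq\; \int_{-t}^0 e^{\lambda' s}L'(\eta,\dot\eta)\,ds \;\leq\; C_1,
\]
with $C_1$ depending only on $\mathscr{N}$ and $t$. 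Combined with uniform superlinearity (taking $A=1$) and the bound $e^{\lambda' s}\geq e^{-2\lambda t}$ on $[-t,0]$, this yields $\int_{-t}^0 \|\dot\gamma(s)\|\,ds \leq e^{2\lambda t}(C_1+B_1 t) =: C_2$. By the mean value theorem some $s_0\in[-t,0]$ satisfies $\|\dot\gamma(s_0)\|\leq C_2/t$, so $(\gamma(s_0),\dot\gamma(s_0))\in K_0 := M\times \overline{B}(0,C_2/t)$, a compact subset of $TM$ fixed in advance.

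Finally, since $(\gamma,\dot\gamma)$ is an orbit of the discounted Euler--Lagrange flow, $(\gamma(s),\dot\gamma(s)) = f^{\lambda'}_{s-s_0}(\gamma(s_0),\dot\gamma(s_0))$ for every $s\in[-t,0]$, with $|s-s_0|\leq t$. Completeness of $f^{\lambda'}$ on $TM$ (standard for Tonelli systems on a closed manifold; the key computation being $\tfrac{d}{ds}\bigl(e^{\lambda' s}H'(q,p)\bigr)=-\lambda' e^{\lambda' s}L'$, whose uniform lower bound on $L'$ prevents finite-time energy blow-up), together with continuous dependence of ODE solutions on initial data and on the parameter $(L',\lambda')$ in the compact-open $C^2$ topology, then implies that
\[
\mathscr{K}_t := \overline{\bigcup_{(L',\lambda')\in\mathscr{N}}\;\bigcup_{|\tau|\leq t} f^{\lambda'}_\tau(K_0)}
\]
is compact in $TM$ and contains $(\gamma(s),\dot\gamma(s))$ for every $s\in[-t,0]$ and every minimizer $\gamma$ as in the statement.

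The main obstacle is that the compact-open $C^2$ topology only controls $L'$ on fixed compact subsets of $TM$, whereas a priori the minimizers explore all of $TM$. Every step of the chain (action bound, $L^1$ bound, flow image) must therefore be arranged so that the successively produced compact sets are fixed \emph{a priori}, and $\mathscr{N}$ is shrunk at the end so that the uniform $C^2$-closeness of $L'$ to $L$ on a pre-fixed compact set of $TM$ (containing the forward/backward flow images of $K_0$ up to time $t$) suffices. This uniform non-explosion of $f^{\lambda'}$ from $K_0$ over time $[-t,t]$ is the delicate point that ties the whole argument together.
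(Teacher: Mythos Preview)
Your overall strategy---action bound via geodesic comparison, then a point $s_0$ of controlled velocity, then propagate by the Euler--Lagrange flow with continuous dependence on parameters---is exactly the paper's. The gap is your claim that shrinking $\mathscr{N}$ in the compact-open $C^2$ topology yields a \emph{uniform} superlinearity bound $L'(q,v)\geq A\|v\|-B_A$ on all of $TM$. It does not: compact-open closeness controls $L'$ only on fixed compact subsets and says nothing at large $\|v\|$. You recognize this obstacle in your last paragraph, but the proposed fix is circular: you need the superlinearity to produce $K_0$ in the first place, so you cannot postpone it until after $K_0$ is in hand.

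The missing ingredient is the \emph{convexity} of $L'$ (part of the Tonelli hypothesis), and this is exactly where the paper's argument does the work you skipped. Fix $R>0$ using superlinearity of $L$ alone, set $\mathscr{K}_1=\{\|v\|\leq R\}$, and shrink $\mathscr{N}$ so that $\|L'-L\|<\varepsilon$ on the fixed compact set $\mathscr{K}_1$. For $\|v\|>R$, writing $w=(R/\|v\|)v$, convexity of $L'$ gives
\[
L'(q,v)\;\geq\;\frac{\|v\|}{R}\,L'(q,w)-\Bigl(\frac{\|v\|}{R}-1\Bigr)L'(q,0),
\]
and since $(q,w),(q,0)\in\mathscr{K}_1$ both values on the right are controlled uniformly over $\mathscr{N}$. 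This yields a uniform lower bound on $L'$ outside $\mathscr{K}_1$ for every Tonelli $L'\in\mathscr{N}$---precisely what you assumed without proof. With this in hand the paper also bypasses your $L^1$ detour: from the action bound it extracts $s_0$ with $L'(\gamma(s_0),\dot\gamma(s_0))\leq (M_L+\varepsilon)e^{(\lambda+\varepsilon)t}$, and the convexity bound forces $(\gamma(s_0),\dot\gamma(s_0))\in\mathscr{K}_1$ directly.
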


\begin{proof} We fix $\varepsilon>0$. \\
Step 1. Fix a Riemannian metric $g$ on $M$ and $t>0$. Let $\gamma_{q_0, q_1}: [-t, 0] \rightarrow M$ be a geodesic for the metric $g$ joining $q_0$ and $q_1$. We have  
\[
\|\dot{\gamma}_{q_0,q_1}(s) \|_{\gamma_{q_0,q_1}(s)} = \frac{d(q_0, q_1)}{t} \qquad s\in [-t, 0].
\]
Consequently, the compact set 
\[
\mathscr{K} =\left \{ (q, v ) \in TM : \|v \| \leq \frac{\text{diam}(M)}{t} \right\}
\]
contains all the points $(\gamma_{q_0, q_1} (s), \dot{\gamma}_{q_0, q_1} (s))$ for $s\in [-t, 0]$.

Let $M_L := \max \{ \max\limits_{\mathscr{K} } L(q, v), 0\}$.
\begin{equation}
\begin{split}\label{Ecomp}
a_t^{L,\lambda} (q_0, q_1) &= \inf_\eta \int_{-t}^0 e^{\lambda s } L(\eta(s), \dot{\eta}(s) )  ds \\
&\leq \int_{-t}^0 e^{\lambda s } L(\gamma_{q_0,q_1}(s), \dot{\gamma}_{q_0,q_1}(s) )  ds\\
&\leq \int_{-t}^0 e^{\lambda s } M_L ds \leq M_L t
\end{split}
\end{equation}{where the infimum is taken over all the absolutely continuous curves $\eta: [-t, 0]\rightarrow M$ such that $\eta(-t) = q_0$ and $\eta(0) = q_1$.}

By the superlinearity of $L$, there exists $R > 2 \frac{\text{diam}(M)} {t} $ such that if $\|v\| \geq R$ we have 
\[L(q, v) \geq (M_L + \varepsilon) (1 + e^{(\lambda + \varepsilon) t})+\varepsilon.\]

We introduce the notation
\[
\mathscr{K}_1 = \{ (q,v)\in TM: \|v\| \leq R \}.
\]

Step 2. We consider any Tonelli Lagrangian $L'$ satisfying  $\| L' - L\|_{C^2, \mathscr{K}_1} \leq \varepsilon$ and any $\lambda'>0$ satisfying $|\lambda' - \lambda| \leq \varepsilon$.

We deduce from the definition of  $R$ and $\mathscr{K}_1$ and the inequality that $\| L' - L\|_{C^2, \mathscr{K}_1} \leq \varepsilon$ that if $\| v\|=R$, we have
$$ L'(q, v) \geq (M_L + \varepsilon) (1 + e^{(\lambda + \varepsilon) t}).$$

For every $(q, v)\in TM$ with $\|v\| {>} R$, let $w = \frac{R}{\|v\|} v$. By the convexity of $L'$, we have
\[
L'(q, w) \leq  ( 1- \frac{R}{\|v\|} ) L'(q,0) + \frac{R}{\|v\|} L'(q, v).
\]
So
\begin{equation*}
\begin{split}
L'(q, v) &\geq \frac{\|v\|}{R} L'(q, w) - (\frac{\|v\|}{R} -1) L'(q, 0)\\
&\geq   \frac{\|v\|}{R} (M_L + \varepsilon)(1+e^{(\lambda+ \varepsilon) t}) -  (\frac{\|v\|}{R} -1) (M_L + \varepsilon)\\
&=  \frac{\|v\|}{R} (M_L + \varepsilon) e^{(\lambda + \varepsilon) t} + M_L + \varepsilon >   (M_L + \varepsilon) e^{(\lambda + \varepsilon) t}.
\end{split}
\end{equation*}

We have then proven that
\begin{equation}\label{EK1} \forall (q, v)\notin \mathscr{K}_1, L'(q, v) > (M_L + \varepsilon) e^{(\lambda + \varepsilon) t}.\end{equation} 
Because  $\| L' - L\|_{C^2, \mathscr{K}_1} \leq \varepsilon$, we have $M_{L'}\leq M_L+\varepsilon$ and we deduce from (\ref{Ecomp}) that
$a_t^{L', \lambda'} (q_0, q_1)  \leq (M_L + \varepsilon) t$. That is,
if $\gamma$ is minimizing for $(L',\lambda')$ between $-t$ and $0$, we have 
\[
\int_{-t}^0 e^{\lambda' s } L'(\gamma(s),\dot{\gamma}(s)) ds \leq (M_L +\varepsilon) t .
\]
Hence, there exists $s_0\in[-t, 0]$ such that 
\[
L'(\gamma(s_0), \dot{\gamma}(s_0)) \leq (M_L + \varepsilon) e^{-\lambda' s_0} \leq (M_L + \varepsilon) e^{(\lambda + \varepsilon) t}.
\]
We deduce from Equation (\ref{EK1}) that $(\gamma(s_0), \dot{\gamma}(s_0)) \in \mathscr{K}_1$.

Hence, if $\gamma$ is minimizing for $(L',\lambda')$ between $-t$ and $0$, we have
\[
\forall s\in [-t, 0], \qquad(\gamma(s), \dot{\gamma}(s)) \in \phi_{L', \lambda'}^{[-t, t]} (\mathscr{K}_1).
\]
To conclude, note that  the set
\[
\bigcup_{\substack{\|L'-L\|_{C^2, \mathscr{K}_{1}} \leq \varepsilon\\ |\lambda'-\lambda| \leq \varepsilon}} \phi_{L', \lambda'}^{[-t, t]} (\mathscr{K}_1)
\]
is relatively compact in $TM$ because of the continuous dependence of the solutions of a differential equation from the parameters (see e.g. \cite{HubWe1995}).
\end{proof}

Using Legendre duality, we deduce a similar statement for Tonelli Hamiltonians.
\begin{cor}\label{Capcomp}
Let $H$ be a Tonelli Hamiltonian, $\lambda>0$ and $t>0$. There exist a neighborhood $\mathscr{N}$ of $(H, \lambda)$ in the compact-open $C^2$ topology and a compact set $\mathscr{K}_t\subset T^* M$ such that if $(H', \lambda') \in \mathscr{N}$ with $H'$ Tonelli and $\lambda'>0$ and if $(\varphi_s^H(x))_{s\in[-t, 0]}$ is a minimizing piece of orbit for $(H', \lambda')$, then 
\[
\varphi_s^H(x) \in \mathscr{K}_t \qquad \forall s\in [-t,0].
\]
\end{cor}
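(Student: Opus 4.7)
The plan is to transfer the Lagrangian statement of Proposition~\ref{Papcomp} through the Legendre duality, exploiting its continuity in the $C^2$ compact-open topology and the fact that minimizing pieces of Hamiltonian orbit correspond exactly to minimizing Lagrangian orbits under the Legendre map.

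First, let $L:TM\to\R$ denote the Tonelli Lagrangian associated to $H$. Apply Proposition~\ref{Papcomp} to $(L,\lambda)$ and the given $t>0$ to obtain a $C^2$ compact-open neighborhood $\mathscr{N}_L$ of $(L,\lambda)$ and a compact set $\mathscr{K}^L_t\subset TM$ such that every minimizing curve on $[-t,0]$ for any $(L',\lambda')\in\mathscr{N}_L$ (with $L'$ Tonelli, $\lambda'>0$) takes values in $\mathscr{K}^L_t$.

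Second, the Legendre dual operation $H\mapsto L$ is continuous in the $C^2$ compact-open topology on Tonelli Hamiltonians/Lagrangians. Indeed, locally $L'(q,v)=\langle p,v\rangle-H'(q,p)$ where $p$ is the unique solution of $v=\partial H'/\partial p(q,p)$, and the Tonelli assumption $H'_{p,p}>0$ together with the implicit function theorem shows that $p$ (and hence $L'$) depends in a $C^2$ way on $H'$, uniformly on compacta. Thus, after possibly shrinking, there exists a $C^2$ compact-open neighborhood $\mathscr{N}$ of $(H,\lambda)$ such that $(L',\lambda')\in\mathscr{N}_L$ for every $(H',\lambda')\in\mathscr{N}$, where $L'$ is the Legendre dual of $H'$.

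Third, the corresponding Legendre diffeomorphisms $\mathcal{L}^{H'}:T^*M\to TM$, $(q,p)\mapsto(q,\partial H'/\partial p(q,p))$, also depend continuously on $H'$ in the compact-open $C^1$ topology, so their inverses converge uniformly on compact sets to $(\mathcal{L}^H)^{-1}$. Hence, after shrinking $\mathscr{N}$ once more, the set
\[
\mathscr{K}_t \;=\; \overline{\bigcup_{(H',\lambda')\in\mathscr{N}}(\mathcal{L}^{H'})^{-1}(\mathscr{K}^L_t)}
\]
is relatively compact in $T^*M$, since the union lies in a fixed bounded neighborhood of $(\mathcal{L}^H)^{-1}(\mathscr{K}^L_t)$.

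Finally, for $(H',\lambda')\in\mathscr{N}$, let $(\varphi_s^{H'}(x))_{s\in[-t,0]}$ be a minimizing piece of orbit. Since the discounted Hamiltonian flow and the discounted Euler--Lagrange flow are conjugate via $\mathcal{L}^{H'}$ and the discounted actions correspond to one another through this conjugation, the curve $\gamma(s)=\pi\circ\mathcal{L}^{H'}(\varphi_s^{H'}(x))$ is a minimizing orbit on $[-t,0]$ for $(L',\lambda')$. Proposition~\ref{Papcomp} then yields $(\gamma(s),\dot\gamma(s))\in\mathscr{K}^L_t$, so $\varphi_s^{H'}(x)\in(\mathcal{L}^{H'})^{-1}(\mathscr{K}^L_t)\subset\mathscr{K}_t$ for all $s\in[-t,0]$. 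The main (mild) obstacle is the uniformity in $H'$ of the continuity of the Legendre transform and its inverse, which is what forces the shrinkings of $\mathscr{N}$ in steps two and three.
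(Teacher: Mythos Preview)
Your proof is correct and follows exactly the approach indicated in the paper, which simply says ``Using Legendre duality, we deduce a similar statement for Tonelli Hamiltonians.'' You have carefully filled in the details of that deduction: the continuity of the Legendre transform in the $C^2$ compact-open topology, the correspondence between minimizing Hamiltonian and Lagrangian orbits, and the construction of the compact set $\mathscr{K}_t$ via the inverse Legendre maps.
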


\section{Green bundles}\label{sGreen}
Green bundles will be the main ingredient to prove the results of $C^2$ convergence. Here we state some of their properties.
\subsection{Proof of Proposition \ref{PGreen}}
The first goal of this section is to prove Proposition \ref{PGreen}. The proof is very similar to the one given in \cite{Arnaud08} for Tonelli Hamiltonian flows. With the notations of Proposition \ref{PGreen}, we use $I_-=(-\infty, t)\cap I$ and $I_+= (t, +\infty)\cap I$.

Because there are no conjugate points on $I$, we have for every $s\not=s'$ in $I$
$$D\varphi_{s'-s}V(\varphi_s(x))\cap V(\varphi_{s'}(x))\not=\{ 0\}$$
and then  by taking their images by $D\varphi_{t-s'}(x)$, $$\cH(G_{t-s}(\varphi_t(x)))-\cH(G_{t-s'}(\varphi_t(x)))$$ is always a non-degenerate symmetric matrix. As this continuously depends on $s$, $s'$, we deduce that its signature is constant on each connected set
$$D_-=\{ (s, s')\in I_-^2, s<s'\}; D_+=\{ (s, s')\in I_+^2, s<s'\}; D_0=I_-\times I_+.$$

To determine these three signatures, we only consider the case where $|s|$ and $|s'|$ are small. We use the notation in usual coordinates for $D\varphi_s(y)$
$$M_s(y)=\begin{pmatrix}a_s(y)& b_s(y)\\ c_s(y)&d_s(y)
\end{pmatrix}.$$
Then we have $d_s(y)={\bf 1}+O(s)$ and $b_s(y)= O(s)$ and we deduce from linearized discounted equations that $\dot b_s(y) = H_{q, p}(y)b_s(y)+H_{p, p}(y)d_s(y) = H_{p, p}(y)+O(s)$ and then $b_s(y) = sH_{p, p}(y)+O(s)$, the $O(s)$ being uniform in $y$. This implies that 
$$\cH(G_s(\varphi_t(x)))=d_s(\varphi_{t-s}(x)).\left( b_s(\varphi_{t-s}(x)) \right)^{-1}=\frac{1}{s}(H_{p, p}(\varphi_t(x))^{-1}+O(s)).$$
We deduce for $s>0$ small enough that
$$\cH(G_{2s}(\varphi_t(x)))-\cH(G_s(\varphi_t(x)))= - \frac{1}{2s}(H_{p, p}(\varphi_t(x))^{-1}+O(s))  < 0;$$
$$\cH(G_{s}(\varphi_t(x)))-\cH(G_{-s}(\varphi_t(x)))=\frac{2}{s}(H_{p, p}(\varphi_t(x))^{-1}+O(s))>0;$$
$$\cH(G_{-s}(\varphi_t(x)))-\cH(G_{-2s}(\varphi_t(x)))=-\frac{1}{2s}(H_{p, p}(\varphi_t(x))^{-1}+O(s))<0.$$
This finishes the proof of Proposition \ref{PGreen}.
\subsection{Continuity of $ G_t^{c,\lambda}$ and semi-continuity of the two  Green bundles}
\begin{nota}\begin{itemize}
\item We consider a map $c\in C \mapsto H_c$ defined on some metric space $C$ that is continuous for the $C^2$ open-compact topology and such that every $H_c: T^*M\rightarrow \R$ is a $C^2$-convex in the fiber Hamiltonian. We will denote by $(\varphi_t^{c, \lambda})$ the flow associated to the $\lambda$-discounted equation for $H_c$.
\item then we use the notation $G_t^{c, \lambda}(x)=D\varphi_t^{c, \lambda}V(\varphi_{-t}^{c, \lambda}(x))$.
\end{itemize}
\end{nota}
Observe that the map 
$$g:(t, x, c, \lambda)\mapsto G_{-t}^{c, \lambda}(x)$$
is continuous. 

\begin{nota}
We then define $\cU$ as being the set of the
$(t, x, c, \lambda)\in \R\times T^*M\times C\times \R$ such that there is no conjugate points for $H^{c, \lambda}$ on the piece of orbit of $x$ between $x$ and $\varphi_t^{c, \lambda}(x)$. \\
Moreover $\cU_-=\cU\cap (\R_-\times T^*M\times C\times \R)$ and $\cU_+=\cU\cap (\R_+\times T^*M\times C\times \R)$. 
\end{nota}
Because $g$ is continuous, $\cU$ is open and the map $h=\cH\circ g:\cU_-\cup \cU_+\rightarrow \cS_d$ is continuous.
%Using the continuity of $g$, we know that $\cU$ is open, and we can define the continuous map $h=\cH\circ g$ on $\cU_-\cup \cU_+$. 
We deduce from Proposition~\ref{PGreen} that $h$ is increasing  in the first variable on $\cU_+$ (resp. $\cU_-$) and that if $(t, x, c, \lambda)\in \cU_-$ and $(s, x, c, \lambda)\in \cU_+$,  we have
\begin{equation}\label{EfiniGB} h({s}, x, c, \lambda)<h({t}, x, c, \lambda).\end{equation}
\begin{nota}
We are interested in infinite time interval, so we introduce
$$\cU_-^\infty=\{ (x,c, \lambda); \forall t\in \R_-, (t, x, c, \lambda)\in\cU_-\}$$
and 
$$\cU_+^\infty=\{ (x,c, \lambda); \forall t\in \R_+, (t, x, c, \lambda)\in\cU_+\}.$$
\end{nota}
We deduce from the continuity of $h$ that $\cU_-^\infty$ and $\cU_+^\infty$ are closed. Moreover, we can define
\begin{itemize}
\item for $(x,c, \lambda)\in \cU_-^\infty$ the Green bundle $\displaystyle{G_{+}^{c, \lambda}(x)=\lim_{t\rightarrow+\infty}G_{t}^{c, \lambda}(x)}$;
\item for $(x,c, \lambda)\in \cU_+^\infty$ the Green bundle $\displaystyle{G_{ -}^{c, \lambda}(x)=\lim_{t\rightarrow+\infty}G_{-t}^{c, \lambda}(x)}$.
\end{itemize}
Then we have
\begin{itemize}
\item $\displaystyle{ h_-(x, c, \lambda):=\cH(G_-^{c, \lambda}(x))=\lim_{t\rightarrow+\infty}h({t}, x, c, \lambda)}$;
\item $\displaystyle{h_+(x, c, \lambda):=\cH(G_+^{c, \lambda}(x))=\lim_{t\rightarrow+\infty}h({-t}, x, c, \lambda)}$.

\end{itemize}
Observe that because of Equation (\ref{EfiniGB}), we have
$$\forall (x, c, \lambda)\in \cU_-^\infty\cap \cU_+^\infty, h_-(x, c, \lambda)\leq h_+(x, c, \lambda).$$
We deduce from the fact that the considered functions are continuous and $t$-increasing the following proposition about semi-continuity.
\begin{prop}
Let us fix $(x, c_0, \lambda_0)\in \cU^\infty_{ +}$ and $\varepsilon>0$. Then there exist a neighborhood $\cN_-$ of $(x, c_0, \lambda_0)$ in $T^*M\times C\times \R$ and $T>0$ such that
\begin{itemize}
\item for every  $({t}, y, c, \lambda)\in \cU_{+}\cap (\R_{+}\times \cN_-)$ with $t\geq T$, we have $$h({ t}, y, c, \lambda)\geq h_-(x, c_0, \lambda_0)-\varepsilon{\bf 1};$$
\item   for every $(y, c, \lambda)\in \cN_-\cap\cU_{+}^\infty$, we have $$h_-(y, c, \lambda)\geq h_-(x, c_0, \lambda_0)-\varepsilon{\bf 1}.$$
\end{itemize}
\end{prop}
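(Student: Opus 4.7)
The plan is to convert the two-sided convergence $h(t, x, c, \lambda) \to h_-(x, c, \lambda)$ into a one-sided matrix inequality by using the monotonicity of $h$ in its first variable on $\cU_+$ deduced from Proposition~\ref{PGreen}. Since the function $t \mapsto h(t, x, c_0, \lambda_0)$ is increasing on $(0, +\infty)$ and tends to $h_-(x, c_0, \lambda_0)$ as $t \to +\infty$, the limit is in fact a supremum. Given $\varepsilon > 0$, I can therefore pick $T > 0$ so large that
$$h(T, x, c_0, \lambda_0) \geq h_-(x, c_0, \lambda_0) - \tfrac{\varepsilon}{2}{\bf 1}.$$

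Next I would exploit the fact that $\cU$ is open and $h$ is continuous on $\cU_+$ to select a neighborhood $\cN_-$ of $(x, c_0, \lambda_0)$ in $T^*M \times C \times \R$ small enough that for every $(y, c, \lambda) \in \cN_-$ one has $(T, y, c, \lambda) \in \cU_+$ together with
$$\| h(T, y, c, \lambda) - h(T, x, c_0, \lambda_0) \| < \tfrac{\varepsilon}{2}.$$
From the very definition $\|S\| = \sup_{\|v\| = 1} |S(v, v)|$ on $\cS_d$, this norm bound is equivalent to the Loewner sandwich $-\tfrac{\varepsilon}{2} {\bf 1} \leq h(T, y, c, \lambda) - h(T, x, c_0, \lambda_0) \leq \tfrac{\varepsilon}{2} {\bf 1}$, and chaining with the previous display produces
$$h(T, y, c, \lambda) \geq h_-(x, c_0, \lambda_0) - \varepsilon {\bf 1}.$$

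The two bullets then follow almost for free. For the first, any $(t, y, c, \lambda) \in \cU_+ \cap (\R_+ \times \cN_-)$ with $t \geq T$ satisfies $h(t, y, c, \lambda) \geq h(T, y, c, \lambda)$ by monotonicity of $h$ in its first variable on $\cU_+$, which combined with the previous inequality yields $h(t, y, c, \lambda) \geq h_-(x, c_0, \lambda_0) - \varepsilon {\bf 1}$. For the second, if moreover $(y, c, \lambda) \in \cU_+^\infty$, then $(t, y, c, \lambda)$ belongs to $\cU_+$ for every $t > 0$, so the inequality just obtained holds for all $t \geq T$; letting $t \to +\infty$ and using the definition of $h_-(y, c, \lambda)$ gives $h_-(y, c, \lambda) \geq h_-(x, c_0, \lambda_0) - \varepsilon {\bf 1}$. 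I do not expect a substantive obstacle: all the real content is already packaged into Proposition~\ref{PGreen} (monotonicity of $h$ and existence of the Green limit), continuity of $h$, and openness of $\cU$; the only bookkeeping is the standard passage between $\|S\| < \eta$ and the two-sided quadratic-form inequality $-\eta{\bf 1} \leq S \leq \eta{\bf 1}$ for symmetric matrices.
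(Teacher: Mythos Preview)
Your proof is correct and follows essentially the same approach as the paper: pick $T$ so that $h(T,x,c_0,\lambda_0)$ is within $\varepsilon$ of the limit, use continuity of $h$ and openness of $\cU_+$ to propagate this to a neighborhood, then invoke monotonicity in $t$ for the first bullet and pass to the limit for the second. The only cosmetic differences are that you split the $\varepsilon$ into two halves and explicitly arrange $(T,y,c,\lambda)\in\cU_+$ via openness, whereas the paper phrases the continuity step directly as an inequality in the Loewner order and leaves the membership $(T,y,c,\lambda)\in\cU_+$ implicit (it follows automatically from $(t,y,c,\lambda)\in\cU_+$ with $t\geq T$).
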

\begin{proof} The second point comes from the first point by taking the limit for $t\rightarrow+\infty$. Now we prove the first point.\\
Because $\displaystyle{\lim_{t\rightarrow +\infty}h({t}, x, c_0, \lambda_0)=h_-(x, c_0, \lambda_0)}$, there exists some $T>0$ such that
$$h({T}, x, c_0, \lambda_0)>h_-(x, c_0, \lambda_0)-\varepsilon{\bf 1}.$$
By continuity of $h$, there exists a neighborhood $\cN_-$ of $(x, c_0, \lambda_0)$ in $T^*M\times C\times \R$ such that
$$\forall (y, c, \lambda)\in \cN_-, ({T}, y, c, \lambda)\in \cU_{+}\Rightarrow  h({T}, y, c, \lambda)>h_-(x, c_0, \lambda_0)-\varepsilon{\bf 1}.$$
Because $h$ is increasing in $t$, we have
$$\forall t\geq T, \forall (y, c, \lambda)\in \cN_-, ({t},y, c, \lambda)\in \cU_{+}\Rightarrow h({t}, y, c, \lambda)>h_-(x, c_0, \lambda_0)-\varepsilon{\bf 1}.$$
\end{proof}
We have of course in a similar way a statement for the positive times.
\begin{prop}\label{PupsemicGreen}
Let us fix $(x, c_0, \lambda_0)\in \cU^\infty_{\color{blue} -}$ and $\varepsilon>0$. Then there exists a neighborhood $\cN_+$ of $(x, c_0, \lambda_0)$ in $T^*M\times C\times \R$ and $T>0$ such that
\begin{itemize}
\item for every $({-t}, y, c, \lambda)\in \cU_{-}\cap (\R_{-}\times \cN_+)$  with $t\geq T$, we have $$h({-t}, y, c, \lambda)\leq h_+(x, c_0, \lambda_0)+\varepsilon{\bf 1};$$
\item   for every $(y, c, \lambda)\in \cN_+\cap\cU_{-}^\infty$, we have $$h_+(y, c, \lambda)\leq h_{+}(x, c_0, \lambda_0)+\varepsilon{\bf 1}.$$
\end{itemize}
\end{prop}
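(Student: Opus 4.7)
Proposition \ref{PupsemicGreen} is the order-dual of the preceding statement: in place of a lower semicontinuity bound on $h_-$ built from an increasing sequence $h(T,\cdot)$, we want an upper semicontinuity bound on $h_+$ built from a non-increasing sequence $h(-T,\cdot)$. The plan is to mirror the previous argument line-by-line, using the same three ingredients: continuity of $h$ on the open set $\cU_-\cup\cU_+$, the monotonicity in the first variable from Proposition~\ref{PGreen}, and the description of $h_+(x,c_0,\lambda_0)$ as a one-sided limit.

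Concretely, since $(x,c_0,\lambda_0)\in\cU^\infty_-$, the map $t\in(0,+\infty)\mapsto h(-t,x,c_0,\lambda_0)$ is well-defined. Because $h$ is increasing in its first variable on $\cU_-$, this map is non-increasing in $t$, and its limit as $t\to+\infty$ is by definition $h_+(x,c_0,\lambda_0)$. Given $\varepsilon>0$, I would first pick $T>0$ large enough that
\[
h(-T,x,c_0,\lambda_0) \le h_+(x,c_0,\lambda_0)+\tfrac{\varepsilon}{2}\mathbf{1}
\]
in the natural order on $\cS_d$.

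Next, because $\cU_-$ is open and $h$ is continuous on it, I can choose a neighborhood $\cN_+$ of $(x,c_0,\lambda_0)$ in $T^*M\times C\times\R$ small enough that $(-T,y,c,\lambda)\in\cU_-$ for every $(y,c,\lambda)\in\cN_+$, and such that
\[
h(-T,y,c,\lambda) \le h_+(x,c_0,\lambda_0)+\varepsilon\mathbf{1}.
\]
The monotonicity of $h$ in its first variable on $\cU_-$ then propagates the bound to larger $t$: whenever $(-t,y,c,\lambda)\in\cU_-\cap(\R_-\times\cN_+)$ with $t\ge T$, we have $h(-t,y,c,\lambda)\le h(-T,y,c,\lambda)$, which chains with the continuity bound to give the first bullet. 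Letting $t\to+\infty$ along $(y,c,\lambda)\in\cN_+\cap\cU^\infty_-$ and using that $h_+(y,c,\lambda)$ is this limit yields the second bullet.

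I do not expect any substantive obstacle: the proof is a mechanical transcription of the preceding argument with the directions reversed. The only care needed is to keep the monotone direction straight, namely that the relevant sequence here decreases to $h_+$ rather than increasing to $h_-$, and that Proposition~\ref{PGreen} precisely justifies this monotonicity on $\cU_-$.
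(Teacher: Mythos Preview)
Your proposal is correct and mirrors exactly the paper's approach: the paper does not write out a separate proof of Proposition~\ref{PupsemicGreen} but simply notes that it follows ``in a similar way'' from the preceding proposition, and your argument is precisely that order-dual transcription. The only cosmetic differences are that you split $\varepsilon$ into $\varepsilon/2+\varepsilon/2$ and explicitly shrink $\cN_+$ so that $(-T,y,c,\lambda)\in\cU_-$ for all $(y,c,\lambda)\in\cN_+$, whereas the paper's preceding proof phrases the continuity step as an implication; neither difference is substantive.
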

\subsection{Comparison between Green bundles and second derivatives}
\begin{prop}\label{tLOGreen}
Let $u\in C^0(M, \R)$ and $t>0$.  Then for every point $q_0$ where $T^\lambda_tu$ is twice differentiable
\begin{itemize}
\item $(\varphi_s^\lambda(q_0, dT^\lambda_tu(q_0)))_{s\in [-t, 0]}$ has no conjugate points;
\item $D^2 T_t^{\lambda} u(q_0)\leq \cH(G^\lambda_t(q_0, dT^\lambda_tu(q_0)))$.
\end{itemize}
\end{prop}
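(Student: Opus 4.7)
The plan has four steps. Fix $q_0$ a point of twice-differentiability of $v:=T_t^\lambda u$, and let $\gamma:[-t,0]\to M$ be a minimizer in the Lax--Oleinik infimum (\ref{EminDLO}) with $\gamma(0)=q_0$. If some other curve from $\gamma(-t)$ to $q_0$ had strictly smaller $\lambda$-discounted action than $\gamma$, substituting it into (\ref{EminDLO}) would beat $\gamma$; hence $\gamma$ is a minimizer for $a_t^\lambda(\gamma(-t),q_0)$, and the previously proved fact that minimizing orbits have no conjugate points gives the first bullet.

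For the second bullet I construct a smooth local majorant for $v$. Set $y_{-t}=(\gamma(-t),p_{-t})$ with $p_{-t}=\frac{\partial L}{\partial v}(\gamma(-t),\dot\gamma(-t))$. Absence of conjugate points means $D\varphi_t^\lambda V(y_{-t})\cap V(\varphi_t^\lambda(y_{-t}))=\{0\}$, so $D\pi\circ D\varphi_t^\lambda$ is an isomorphism from $V(y_{-t})$ onto $T_{q_0}M$; equivalently, the ``exponential-like'' map $F:p\mapsto \pi\varphi_t^\lambda(\gamma(-t),p)$ is a local diffeomorphism near $p_{-t}$. For $q_0'$ near $q_0$, let $\eta_{q_0'}:[-t,0]\to M$ be the unique orbit of (\ref{ELAgD}) with $\eta_{q_0'}(-t)=\gamma(-t)$ and $\eta_{q_0'}(0)=q_0'$ obtained by inverting $F$, and define
\[
S(q_0')=e^{-\lambda t}u(\gamma(-t))+\int_{-t}^0 e^{\lambda s}\bigl[L(\eta_{q_0'}(s),\dot\eta_{q_0'}(s))+\alpha(H)\bigr]\,ds.
\]
Then $S$ is $C^2$ near $q_0$ by smooth dependence on the endpoint, and from the infimum definition of $T_t^\lambda u$ one has $v\leq S$ near $q_0$ with $v(q_0)=S(q_0)$.

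A standard touching argument now yields $dv(q_0)=dS(q_0)$ and $D^2 v(q_0)\leq D^2 S(q_0)$ as symmetric forms: the nonnegative function $S-v$ vanishes at $q_0$ and admits a second-order Taylor expansion there, so its linear part must vanish and its quadratic part must be nonnegative. It remains to identify $D^2 S(q_0)$ with the height $\cH(G_t^\lambda(q_0,p_0))$. A first-variation computation on $S$ (the boundary terms at $-t$ are absorbed because $\eta_{q_0'}(-t)$ is fixed, and the volume term cancels against the $e^{\lambda s}$-weighted Euler--Lagrange equation (\ref{ELAgD})) gives $dS(q_0')=\frac{\partial L}{\partial v}(\eta_{q_0'}(0),\dot\eta_{q_0'}(0))$, i.e.\ the final momentum of $\eta_{q_0'}$. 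Hence $q_0'\mapsto(q_0',dS(q_0'))$ is a local $C^1$ parameterization of the Lagrangian submanifold $\varphi_t^\lambda(\{\gamma(-t)\}\times T^*_{\gamma(-t)}M)$, whose tangent space at $(q_0,p_0)$ is, on the one hand, $D\varphi_t^\lambda V(y_{-t})=G_t^\lambda(q_0,p_0)$, and on the other hand the graph of $D^2 S(q_0)$. Therefore $D^2 S(q_0)=\cH(G_t^\lambda(q_0,p_0))$, and the proposition follows.

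The chief technical point is the construction of $S$: turning the no-conjugate-points property into the implicit-function statement that $F$ is a local diffeomorphism, and then the first-variation calculation yielding $dS=$ final momentum. Once those are in place, the touching-function Hessian inequality (which requires only second-order differentiability of $v$, which is our hypothesis on $q_0$) and the geometric identification of $D^2S$ as the height of the image of a vertical fiber under $\varphi_t^\lambda$ are routine.
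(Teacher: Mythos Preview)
Your proof is correct and follows essentially the same approach as the paper: build a smooth local majorant of $T_t^\lambda u$ touching at $q_0$, and identify its Hessian with $\cH(G_t^\lambda)$ via the fact that its gradient graph locally parametrizes the Lagrangian submanifold $\varphi_t^\lambda(T^*_{\gamma(-t)}M)$. The only notable difference is that the paper takes as majorant the action $a_t^\lambda(\gamma(-t),\cdot)$ itself and then invokes the a~priori compactness lemma to show it is $C^2$ near $q_0$, whereas you construct $S$ directly from the inverse-function theorem (so your $S$ need not equal $a_t^\lambda(\gamma(-t),\cdot)$ globally, only dominate $T_t^\lambda u$); this sidesteps the uniqueness-of-minimizer discussion and is a slight streamlining, but the geometric content is identical.
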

\begin{proof}
Let us now consider a point $q_0$ where $T^\lambda_tu$ is twice differentiable. Then the infimum in Equation (\ref{EminDLO}) is attained at a unique solution $\gamma:[-t, 0]\rightarrow M$ for (\ref{ELAgD}) and we have
$$\frac{\partial L}{\partial v}(\gamma(0), \dot\gamma(0))=dT_t^\lambda u(q_0).$$
As $\gamma$ is minimizing, $(\varphi_{s}^\lambda(q_0, dT^\lambda_tu(q_0)))_{s\in [-t, 0]}$ has no conjugate points. \\
Because of the definition of the semi-group in (\ref{EminDLO}), we have
$$T_t^\lambda u(q_0)- {e^{-\lambda t}} u(\gamma(-t))=a_t^\lambda(\gamma(-t), \gamma(0))+\alpha(H)t$$
and
$$ \forall~ q\in M, \qquad T_t^\lambda u(q)-{e^{-\lambda t}} u(\gamma(-t))\leq a_t^\lambda(\gamma(-t), q)+\alpha(H)t.$$
Subtracting these two equations, we deduce
$$T_t^\lambda u(q)-T_t^\lambda u(q_0)\leq a_t^\lambda(\gamma(-t), q)-a_t^\lambda(\gamma(-t), q_0).$$
These two functions vanish for $q=q_0$ and have the same derivative $\frac{\partial L}{\partial v} (q_0, \dot\gamma(0))$ at $q_0$.  If we succeed in proving that 
\begin{equation}\label{EGreenD2}\frac{\partial {^2}a_t^{\lambda}}{\partial q_1{^2}}(\gamma(-t), q_0)=\cH(G^\lambda_t(q_0, dT^\lambda_tu(q_0))),\end{equation} 
we will deduce that
$$D^2T_t^{\lambda} u(q_0)\leq \cH(G^\lambda_t(q_0, dT^\lambda_tu(q_0))).$$

The arguments that we use to prove Equality (\ref{EGreenD2}) are similar to the ones given in \cite{Arnaud12}.
\begin{lemma}\label{Limvert}
For every $t>0$ and every $q\in M$, the function $v_q^t=a^\lambda_t(q,\cdot)$ is semi-concave, and satisfies
$$\cG(dv_q^t) \subset \varphi_t^\lambda(T_q^*M).$$

\end{lemma}
\begin{proof}
Because  $a_t^\lambda$ is semi-concave, the    function $v^t_q $ is semi-concave and then Lipschitz.  By Rademacher's theorem $v^t_q $ is differentiable almost everywhere.\\
  Moreover, if $q_0$ is a point where $v^t_q$ is differentiable, then $v^t_q$ has exactly one super-differential at this point, there is  only one minimizing arc $\eta$ joining $(-t, q)$ to $(0, q_0)$, and we have: 
\begin{enumerate}
\item[$\bullet$] $dv_q^t(q_0)=\frac{\partial L}{\partial v}(\eta (0), \dot\eta (0))$;
\item[$\bullet$] $(\eta(-t),     \frac{\partial L}{\partial v}(\eta (-t), \dot\eta (-t)))=(q,   \frac{\partial L}{\partial v}(\eta (-t), \dot\eta (-t)))\in T^*_q M$;
\item[$\bullet$] $\varphi^\lambda_t \left( q,       \frac{\partial L}{\partial v}(\eta (-t), \dot\eta (-t))\right)=(\eta (0), \frac{\partial L}{\partial v}(\eta (0), \dot\eta (0))) =(q_0, dv_q^t(q_0))$.
\end{enumerate}
Then we have proved that: $\varphi^\lambda_t({T_q^*M})\supset \cG (dv_q^t)$. Hence, we have selected a pseudograph in the image $\varphi_t^\lambda(T_q^*M)$ of the vertical.
\end{proof}
We come back to the point    $q_0$ where $T^\lambda_tu$ is twice differentiable and recall that $(\varphi_{s}^\lambda(q_0, dT^\lambda_tu(q_0)))_{s\in [-t, 0]}=(q_s, p_s)_{s\in  [-t, 0]}$ has no conjugate point because it is minimizing and that $\gamma =(\pi\circ\varphi_{s}^\lambda(q_0, dT^\lambda_tu(q_0)))_{s\in [-t, 0]}$ is the unique minimizing arc joining $\gamma(-t)$ to $\gamma(0)=q_0$. 
\begin{lemma}
There exists a neighborhood $V_0$ of $q_0=\gamma (0)$ in $M$ such that $v_{\gamma(-t)}^t$  is as regular as $H$ is (then at least $C^2$) and then
$$D^2v_{\gamma(-t)}^t(q_0)=\cH(G^\lambda_t(q_0, dT^\lambda_tu(q_0))).$$
\end{lemma}
\begin{proof}
Lemma \ref{Limvert} proves that $\cG (dv_{\gamma(-t)}^t)\subset \varphi^\lambda_t(T^*_{\gamma(-t)}M)$. Let us now prove that   $v_{\gamma(-t)}^t$ is smooth near $q_0$.

We use now   the so-called ``a priori compactness lemma''   (see Corollary \ref{Capcomp}) that says to us that there exists a constant $K_t=K>0$ such that  the velocities $(\dot\gamma (s))_{s\in[0, t]}$ of any minimizing arc $\gamma$ between any points $q\in M$ and $q'\in M$ are bounded by $K$; hence if we denote  by $\cK$ the set of the minimizing arcs that are parametrized by $[-t,0]$, $\cK$ is  a compact set for the $C^1$ topology because it is the image by the projection $\pi$ of a closed set of bounded orbits. Let us denote  by $\cK_0$ the set of $\eta\in \cK$ such that $\eta (-t)=\gamma(-t)$; then $\cK_0$ is compact. Let us introduce another notation: 
$\cK (q)=\{ \eta\in \cK_0 : \eta (0)=q\}$. Then $\cK (q_0)=\{\gamma\}$ and hence, because $\cK_0$ is closed,  for $q$ close enough to $q_0$, all the elements of $\cK (q)$ are $C^1$ close to $\gamma$.

Moreover, $\varphi^\lambda_t(T^*_{\gamma(-t)}M)$ is a submanifold of $T^*M$ that contains $$(q_0, \frac{\partial L}{\partial v}(q_0, \dot\gamma(0)))=(q_0, p_0).$$ Its tangent space at $(q_0, p_0)$ is $G^\lambda_t(q_0, p_0)$, which is transverse to the vertical  because $(q_s,p_s)_{s\in [-t, 0]}$ has no conjugate vectors. Hence, the manifold $\varphi^\lambda_t(T^*_{\gamma(-t)}M)$ is, in a neighborhood $U_0$ of $(q_0, p_0)$, the graph of a $C^1$ section of $T^*M$ defined  on a neighborhood $V_0$ of $q_0$ in $M$.  Moreover, because this submanifold is Lagrangian (indeed, $T^*_{\gamma(-t)}M$ is Lagrangian and $\varphi^\lambda_t$ is conformally symplectic), it is the graph of $du_0$  where $u_0 : V_0\rightarrow \R$ is a $C^2$ function. 

Now, if $q$ is close enough to $q_0$, we know that all the elements $\eta$ of $\cK (q)$ are $C^1$ close to $\gamma$, and then that $(q, \frac{\partial L}{\partial v}(\eta (0), \dot\eta (0)))$ belongs to the neighborhood $U_0$ of  $(q_0, p_0)= (q_0, \frac{\partial L}{\partial v}(\gamma (0), \dot\gamma (0)))$ and to $\varphi^\lambda_t(T^*_{\gamma(-t)}M)$. Because $\varphi^\lambda_t(T^*_{\gamma(-t)}M)\cap U_0$ is a graph,   this element is unique: $\cK (q)$ has only one element and $v_{\gamma(-t)}^t$ is differentiable at $q$, with $dv_{\gamma(-t)}^t(q)= \frac{\partial L}{\partial v}(\eta (0), \dot\eta (0)) = du_0(q)$. We deduce that near $q_0$, on the set of differentiability of $v_{\gamma(-t)}^t$, $dv_{\gamma(-t)}^t$ is equal to $du_0$; because $v_{\gamma(-t)}^t$ and $u_0$ are Lipschitz on $V_0$ and their differentials are equal almost everywhere, we deduce that on $V_0$, $v_{\gamma(-t)}^t-u_0$ is constant. Hence, on a neighborhood $V_0$ of $q_0$, $v_{\gamma(-t)}^t$ is $C^2$ and 
$$D^2v_{\gamma(-t)}^t(q_0)=\cH(G_t^\lambda(q_0, dT^\lambda_tu(q_0))).\qedhere$$
\end{proof}

\end{proof}

\subsection{On the dynamical criterion in the Hamiltonian case} \label{ssGreencrit}

We recall here two dynamical criteria concerning the Green bundles that are proven in \cite{Arnaud08}.
\begin{prop}\label{PGreencrit} {\rm (Proposition 3.12 in \cite{Arnaud08})} Let $x\in T^*M$ be a point whose negative orbit under the Tonelli Hamiltonian flow $(\varphi_t^H)$ has no conjugate points and let $v\in T_x(T^*M)$ be a tangent vector. Then, if $v\notin G_+(x)$, we have
$$\lim_{t\rightarrow +\infty}\| D(\pi\circ \varphi_{-t}^H)v\|=+\infty.$$
\end{prop}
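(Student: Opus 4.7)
The plan is to combine a decomposition of the tangent space adapted to the Green bundle with the strict monotonicity of the heights $\cH(G_t)$, reducing the claim to the divergence of an off-diagonal block of the flow differential.

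\emph{Step 1 (Double decomposition).} I first use that $G_+(x)$ is transverse to the vertical $V(x)$ (Proposition~\ref{PGreen}), splitting $T_xT^*M = G_+(x)\oplus V(x)$ and writing $v = v_+ + v_V$ with $v_V\in V(x)$; the hypothesis $v\notin G_+(x)$ is exactly $v_V\neq 0$. For each $t>0$, I also use the splitting $T_xT^*M = G_t(x)\oplus V(x)$, valid because the no-conjugate-points assumption makes $G_t(x)=D\varphi^H_t V(\varphi^H_{-t}(x))$ transverse to $V(x)$, and write $v = h_t + \eta_t$. Since $G_t(x)\to G_+(x)$ and both decompositions depend continuously on the Lagrangian subspace, one has $h_t\to v_+$ and $\eta_t\to v_V$; in particular $\|\eta_t\|\geq \|v_V\|/2 > 0$ for $t$ large enough.

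\emph{Step 2 (Killing the $G_t$-component).} By the very definition of $G_t(x)$, one has $D\varphi^H_{-t}(G_t(x)) = V(\varphi^H_{-t}(x))$, so $D\varphi^H_{-t}(h_t)$ is vertical at $\varphi^H_{-t}(x)$ and vanishes under $D\pi$. Hence
\[
D(\pi\circ\varphi^H_{-t})(v) = D\pi\bigl(D\varphi^H_{-t}(\eta_t)\bigr),
\]
and since $\|\eta_t\|$ is bounded both above and away from zero, the problem reduces to proving that for any nonzero $\zeta\in V(x)\simeq\R^d$,
\[
\bigl\|D\pi\bigl(D\varphi^H_{-t}(0,\zeta)\bigr)\bigr\|\to +\infty.
\]

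\emph{Step 3 (Matrix reformulation).} Working in local symplectic coordinates along the orbit, I write $D\varphi^H_t|_y$ in block form with upper-right block $B_t(y)$, so that $B_t(y)\zeta$ is the horizontal part of $D\varphi^H_t(0,\zeta)|_y$. The symplectic block-inverse formula then yields that the upper-right block of $D\varphi^H_{-t}|_x$ equals $-B_t(\varphi^H_{-t}(x))^T$, whence
\[
\bigl\|D\pi\bigl(D\varphi^H_{-t}(0,\zeta)\bigr)\bigr\| = \bigl\|B_t(\varphi^H_{-t}(x))^T\zeta\bigr\|.
\]
The claim becomes: the smallest singular value of $B_t(\varphi^H_{-t}(x))$ diverges as $t\to+\infty$.

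\emph{Step 4 (Main obstacle: the Riccati identity).} This is the technical heart of the proof and is exactly the content of Proposition~3.12 in \cite{Arnaud08}. Using the conservation along the linearized Hamiltonian flow of the bilinear pairing $(J_1,J_2)\mapsto Q_1^TP_2 - P_1^TQ_2$ applied to the vertical Jacobi fields issued at times $-t_1$ and $-t_2$, one obtains the identity
\[
S_{t_1}(x) - S_{t_2}(x) = B_{t_1}(\varphi^H_{-t_1}(x))^{-T}\,B_{t_2-t_1}(\varphi^H_{-t_2}(x))\,B_{t_2}(\varphi^H_{-t_2}(x))^{-1},
\]
where $S_t(x):=\cH(G_t(x))$. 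Passing to the limit $t_2\to +\infty$ with $t_1=t$ fixed, and using the flow cocycle to factor $B_{t_2}$, this rearranges into
\[
S_t(x) - S_+(x) = B_t(\varphi^H_{-t}(x))^{-T}\bigl(A_t(\varphi^H_{-t}(x)) + B_t(\varphi^H_{-t}(x))\,S_+(\varphi^H_{-t}(x))\bigr)^{-1}.
\]
The strict monotone convergence $S_t(x) \searrow S_+(x)$ from Proposition~\ref{PGreen}, combined with the a priori compactness of minimizing orbits (Corollary~\ref{Capcomp}) and the uniform positive-definiteness of $H_{pp}$ on the resulting bounded orbit segment, then force the smallest singular value of $B_t(\varphi^H_{-t}(x))$ to diverge. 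The main difficulty is this quantitative step: one must exploit the Riccati structure and the uniform convexity of $H$ in the fiber to rule out a compensating blow-up of the ``$G_+$-cocycle'' $A_t + B_t S_+$ that would keep $\|B_t^{-1}\|$ bounded while the heights still converge.
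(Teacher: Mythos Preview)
The paper does not give its own proof of this proposition; it is simply quoted from \cite{Arnaud08}. So there is no in-paper argument to compare against, and I will assess your sketch on its own merits.

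Steps~1--3 are correct. Two small remarks. First, the reduction stated at the end of Step~2 (``for any nonzero $\zeta$'') is pointwise in $\zeta$ and would not suffice, since $\eta_t$ varies with $t$; fortunately you strengthen this in Step~3 to the divergence of the smallest singular value of $B_t$, which is what is actually needed given that $\|\zeta_t\|$ stays bounded away from zero. Second, the appeal to Corollary~\ref{Capcomp} is misplaced: that result concerns \emph{minimizing} orbits, whereas here the negative orbit is only assumed to have no conjugate points, a strictly weaker condition. The orbit is bounded for a simpler reason---conservation of $H$ confines it to the level set $\{H=H(x)\}$, which is compact since $M$ is closed and $H$ is superlinear.

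Step~4 is where the genuine gap lies. Your identity
\[
S_t(x)-S_+(x)=B_t^{-T}\bigl(A_t+B_t\,S_+(\varphi^H_{-t}(x))\bigr)^{-1}
\]
(with $A_t,B_t$ evaluated at $\varphi^H_{-t}(x)$) is correct, and from it one gets $B_t^{-1}=P_t^{T}(S_t-S_+)$ with $P_t:=A_t+B_tS_+$, hence $\|B_t^{-1}\|\le\|P_t\|\cdot\|S_t(x)-S_+(x)\|$. Since $\|S_t-S_+\|\to0$, this would finish the argument \emph{if} $\|P_t\|$ stayed bounded. But $P_t$ is exactly the horizontal representation of $D\varphi^H_t$ restricted to the invariant bundle $G_+$, and nothing you have written rules out its exponential growth. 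You flag this yourself (``the main difficulty is this quantitative step\ldots one must\ldots rule out a compensating blow-up'') without actually doing it. This is not a technicality that can be waved away: controlling the interaction between the decay of $S_t-S_+$ and the possible growth of the $G_+$-cocycle is precisely the substance of the proof in \cite{Arnaud08}, and without it your argument is a correct setup followed by an unproven assertion.
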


When moreover we pay attention to points that are far from the critical points of $H$, we can use a symplectic reduction on the level  $\cE$ of $H$ in a neighborhood of  such points by using the canonical projection $p_x: T_x\cE\rightarrow T_x\cE/\R.X_H(x)$. The following statement can be deduced from Proposition 3.17 in \cite{Arnaud08}.

\begin{prop} \label{PcritGreenrelat}
Let $x\in T^*M$ be a point whose negative orbit under the Tonelli Hamiltonian flow $(\varphi_t^H)$ has no conjugate points and let $v\in T_x(T^*M)$ be a tangent vector. We assume that $(t_n)_{n\in \N}$ is a sequence of positive real numbers tending to $+\infty$ such that the angle of $X_H(\varphi^H_{-t_n}(x))$ with\\ $\ker D\pi(\varphi^H_{-t_n}(x)){ =V(\varphi_{-t_n}^H(x))}$ is uniformly bounded from below by some positive constant. Then, if $v\notin G_+(x)$, we have
$$\lim_{t\rightarrow +\infty}\| p_{\varphi_{-t}(x)}\circ D \varphi_{-t_n}^Hv\|=+\infty.$$
\end{prop}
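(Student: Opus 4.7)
My plan is to deduce the statement from Proposition \ref{PGreencrit} via a symplectic reduction argument on the compact energy level $\cE := H^{-1}(H(x))$. Set $y_n := \varphi^H_{-t_n}(x)$ and $w_n := D\varphi^H_{-t_n}(v)$. Since $H$ is a first integral and $\cE$ is compact by superlinearity of $H$, the orbit $(y_n)$ stays in a fixed compact set, so $\|X_H\|$ and $\|DH\|$ are uniformly bounded there. I first reduce to $v \in T_x\cE$: the transverse component to $T_x\cE$ is preserved in magnitude by $D\varphi^H_{-t_n}$ (as $DH$ is invariant) and does not enter the reduced projection $p_{y_n}$. Moreover, $X_H(x) \in G_+(x)$ since the Green bundles are $D\varphi^H$-invariant Lagrangian subspaces containing the flow direction, so the hypothesis $v \notin G_+(x)$ is invariant under $v \mapsto v + a X_H(x)$, and the relevant object really is the class of $v$ in $T_x\cE / \R X_H(x)$.

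By Proposition \ref{PGreencrit}, $\|D\pi(w_n)\|\to+\infty$. I would transfer this to $\|p_{y_n}(w_n)\|\to+\infty$ by contradiction. Suppose $\|p_{y_n}(w_n)\|\le M$ along a subsequence; pick $c_n$ realizing the quotient norm, so that $\|w_n - c_n X_H(y_n)\|\le M$. Applying $D\pi$ and using $D\pi(X_H(y)) = \partial_p H(y)$ and $\|D\pi\|_{\mathrm{op}}\le 1$ yields $\|D\pi(w_n) - c_n\partial_p H(y_n)\|\le M$. The angle hypothesis gives $\|\partial_p H(y_n)\| \geq \sin(\alpha_0)\|X_H(y_n)\|$, and combined with the upper bound on $\|X_H(y_n)\|$ on $\cE$, the quantity $\|\partial_p H(y_n)\|$ is bounded. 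Together with $\|D\pi(w_n)\|\to\infty$, this forces $|c_n|\to\infty$. Applying $D\varphi^H_{t_n}$ to the decomposition $w_n = c_n X_H(y_n) + r_n$ with $\|r_n\|\le M$ and using $D\varphi^H_{t_n}(X_H(y_n)) = X_H(x)$, I obtain $v = c_n X_H(x) + D\varphi^H_{t_n}(r_n)$, hence $\|D\varphi^H_{t_n}(r_n)\|\ge |c_n|\|X_H(x)\| - \|v\| \to \infty$. A bounded-norm sequence $(r_n)$ whose forward iterate blows up along $\R X_H(x)$ is incompatible with the Green-bundle structure at $y_n$; making this rigorous uses the forward-time analogue of Proposition \ref{PGreencrit} together with the fact that $X_H \in G_-$.

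The main obstacle is precisely this last step — producing a clean contradiction from the asymptotic alignment $D\varphi^H_{t_n}(r_n)/c_n \to X_H(x)$ — and it is where the uniform angle hypothesis is indispensable: without it, $X_H(y_n)$ could degenerate toward the vertical $V(y_n)$, and the two projections $D\pi$ and $p_{y_n}$ could decouple, invalidating the quantitative comparison above. Alternatively, as the excerpt indicates, one may simply invoke Proposition 3.17 of \cite{Arnaud08} directly, which is the symplectic-reduction analogue of Proposition \ref{PGreencrit} formulated intrinsically in the reduced spaces $T_{y}\cE/\R X_H(y)$; under the angle hypothesis the reduction is quantitatively well-behaved along $(y_n)$ and the cited result transcribes verbatim to give the desired conclusion.
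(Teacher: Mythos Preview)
The paper does not give its own proof of this proposition: it simply states that the result ``can be deduced from Proposition 3.17 in \cite{Arnaud08}''. Your final paragraph lands on exactly this citation, so in the end your proposal agrees with the paper.

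Your attempted direct reduction to Proposition~\ref{PGreencrit}, however, has a genuine gap that you yourself flag. After reaching $v = c_n X_H(x) + D\varphi^H_{t_n}(r_n)$ with $\|r_n\|\le M$ and $|c_n|\to\infty$, you claim this is ``incompatible with the Green-bundle structure at $y_n$'' and appeal to a forward-time analogue of Proposition~\ref{PGreencrit} together with $X_H\in G_-$. But $G_-(y_n)$ requires the \emph{positive} orbit of $y_n$ to have no conjugate points, and the hypothesis only controls the \emph{negative} orbit of $x$, i.e.\ the segment from $y_n$ forward to $x$ and no further. So neither the forward criterion nor the bundle $G_-$ is available at $y_n$, and there is no obvious way to close the argument from this point. (A minor side remark: your sentence ``combined with the upper bound on $\|X_H(y_n)\|$ on $\cE$, the quantity $\|\partial_p H(y_n)\|$ is bounded'' is logically garbled --- the angle hypothesis gives a \emph{lower} bound on $\|\partial_p H\|$; the upper bound you need is simply $\|\partial_p H\|\le \|X_H\|$, which holds trivially.)

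So the operative content of your proposal is the citation of \cite{Arnaud08}, which is precisely what the paper does.
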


\section{Examples and counter-examples}\label{Sexamples}
\subsection{Examples of upper and lower Green regular weak K.A.M. solutions} 
{The following proposition is proven in \cite{Arnaud2014}. It can also be deduced from the dynamical criterion and Proposition 4.12 of \cite{Arnaud08}.
\begin{prop}
Assume that   $H:T^*\T^d\rightarrow \R$ is a Tonelli Hamiltonian that has a $C^{1,1}$ weak K.A.M. solution\footnote{Observe that it is proved in \cite{Fathi2008} (Theorem 4.11.5) that every $C^1$ weak K.A.M. solution is in fact $C^{1, 1}$.} $u:\T^d\rightarrow \R$ such that there exists $t>0$ for which $\varphi_t^H$ is bi-Lipschitz conjugate to some rotation of $\T^d$. Then $u$ is upper and lower Green regular.
\end{prop}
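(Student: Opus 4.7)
The plan is to combine the $C^{1,1}$ regularity of $u$ with the rotation conjugacy on $\cG(du)$, and then apply the dynamical criterion (Proposition~\ref{PGreencrit}) to identify $D^2u$ with the heights of both Green bundles. Set $\Lambda(\theta) := (\theta, du(\theta))$, so $\Lambda : \T^d \to T^*\T^d$ is Lipschitz and $\cG(du)$ is an invariant Lagrangian graph. Rademacher applied to $du$ produces a full-measure set $\Omega_0 \subset \T^d$ on which $D^2u(\theta)$ exists, and at each such $\theta$ the tangent space $T_{\Lambda(\theta)}\cG(du)$ is the graph of the symmetric matrix $D^2u(\theta)$. Since $u$ is a $C^1$ weak K.A.M. solution, the negative orbit of every $x = \Lambda(\theta)$ has no conjugate points, so $G_+(x)$ is well defined; minimality of the rotation-conjugate map $F_t := \pi \circ \varphi^H_t \circ \Lambda$ propagates the calibration to positive time as well (this is essentially the content of Proposition~4.12 of \cite{Arnaud08}), so $G_-(x)$ also exists along the graph.

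Writing the conjugacy as $F_t = h^{-1} \circ R \circ h$ with $h$ bi-Lipschitz and $R$ a rotation of $\T^d$, one gets $F_{nt} = h^{-1} \circ R^n \circ h$ for every $n \in \Z$, which is bi-Lipschitz with constant $K := \mathrm{Lip}(h)\,\mathrm{Lip}(h^{-1})$ independent of $n$. Let $\Omega \subset \Omega_0$ be the full-measure subset where $D^2u$ and every $DF_{nt}$, $n \in \Z$, exist; then
\[
\| DF_{nt}(\theta) \| \le K \qquad \text{for all } n \in \Z, \ \theta \in \Omega.
\]
Now fix $\theta \in \Omega$ and $x = \Lambda(\theta)$. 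Both $T_x\cG(du)$ and $G_+(x)$ are Lagrangian graphs transverse to the vertical, of heights $D^2u(\theta)$ and $\cH(G_+(x))$ respectively. If they differed, one could choose $\xi \in \R^d \setminus \{0\}$ with $v := D\Lambda(\theta)\,\xi = (\xi, D^2u(\theta)\xi) \notin G_+(x)$; Proposition~\ref{PGreencrit} would then force $\| D(\pi \circ \varphi^H_{-nt})(x)\, v \| \to +\infty$. But from $\pi \circ \varphi^H_{-nt} \circ \Lambda = F_{-nt}$ and the chain rule, $D(\pi \circ \varphi^H_{-nt})(x)\, v = DF_{-nt}(\theta)\,\xi$, whose norm is bounded by $K\|\xi\|$: a contradiction. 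Hence $D^2u(\theta) = \cH(G_+(x))$ at every $\theta \in \Omega$, proving upper Green regularity. The symmetric version of the criterion (blowup of $\| D(\pi \circ \varphi^H_{+nt}) v\|$ for $v \notin G_-$, using forward iteration of $F_t$) is proven identically and yields $D^2u(\theta) = \cH(G_-(x))$ from the same uniform-Lipschitz bound on $\{F_{nt}\}_{n\ge 0}$.

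The main obstacle I anticipate is precisely the justification that $G_-$ exists everywhere on $\cG(du)$ and that a forward analogue of Proposition~\ref{PGreencrit} is available: this is the step where the minimality of $F_t$ (obtained for free from the rotation conjugacy) must be used to upgrade the $C^1$ weak K.A.M. solution $u$ to a two-sided calibrated object on the invariant graph. Once that is in place, the rest is purely a linear-algebra consequence of the uniform bi-Lipschitz bound on the iterates $F_{nt}$.
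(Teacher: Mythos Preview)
Your argument is correct and is precisely the route the paper indicates (the dynamical criterion of Proposition~\ref{PGreencrit} combined with Proposition~4.12 of \cite{Arnaud08}), using the uniform bi-Lipschitz bound on the iterates $F_{nt}=h^{-1}R^nh$ to contradict the blow-up of $\|D(\pi\circ\varphi^H_{\mp nt})v\|$ for $v\notin G_\pm$. The obstacle you anticipate for $G_-$ is lighter than you fear and does not require minimality of the rotation: since $u$ is $C^1$, the Lipschitz Lagrangian graph $\cG(du)$ sits in a single energy level and is fully $(\varphi^H_t)$-invariant (this is already implicit in the hypothesis that $\varphi_t^H|_{\cG(du)}$ is conjugate to a self-map of $\T^d$), so every orbit on it is calibrated for all real time, hence globally minimizing with no conjugate points in either direction, and $G_-$ exists along the whole graph.
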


The ideas of the proof of the following proposition are contained in \cite{Arnaud2014}. Let us recall that a vector field is Kupka-Smale if all its periodic and fixed points are hyperbolic.
\begin{prop}
Assume that   $H:T^*\T^d\rightarrow \R$ is a Tonelli Hamiltonian that has a $C^2$ weak K.A.M. solution $u: \T^d\rightarrow \R$ such that $X_{H|\cG(du)}$ is  Kupka-Smale. Then $u$ is upper and lower Green regular.
\end{prop}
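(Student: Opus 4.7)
The plan is to strip off the periodic skeleton of $X_{H|\cG(du)}$ (which is negligible thanks to Kupka--Smale) and then, at every remaining non-periodic point, combine the dynamical criterion of Proposition~\ref{PcritGreenrelat} with an Anosov-type closing argument to identify $T_x\cG(du)$ with both $G_+(x)$ and $G_-(x)$. Since $u$ is $C^2$, the graph $\cG(du)$ is a compact $C^2$ Lagrangian submanifold of $T^*\T^d$, invariant under $\varphi^H_t$, with $\pi$ a $C^2$ diffeomorphism of $\cG(du)$ onto $\T^d$. The bundle $T\cG(du)$ is then a continuous invariant Lagrangian subbundle transverse to the vertical, so every orbit contained in $\cG(du)$ is free of conjugate points; both Green bundles $G_\pm(x)$ are thus defined everywhere on $\cG(du)$, and the monotone family in Proposition~\ref{PGreen} squeezes $T\cG(du)$ between them:
\[
\cH(G_-(x))\leq \cH(T_x\cG(du))=D^2u(\pi(x))\leq \cH(G_+(x)),\qquad x\in\cG(du).
\]
Upper (resp.\ lower) Green regularity is thus equivalent to $T_x\cG(du)=G_+(x)$ (resp.\ $=G_-(x)$) at Lebesgue-a.e.\ $x\in\cG(du)$. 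By Kupka--Smale, every fixed point and every periodic orbit of $X_{H|\cG(du)}$ is hyperbolic in the restricted flow, hence isolated, so the set $P\subset\cG(du)$ of fixed and periodic points is a countable union of orbits and $\pi(P)$ has Lebesgue measure zero.

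Fix $x\in\cG(du)\setminus P$ and assume, for contradiction, $G_+(x)\neq T_x\cG(du)$. Since both are $d$-dimensional Lagrangian subspaces of $T_x(T^*\T^d)$, one can select $v\in T_x\cG(du)\setminus G_+(x)$. Because $x\notin P$ and the critical set of $H$ is isolated, some subsequence $t_n\to+\infty$ has the angle of $X_H(\varphi^H_{-t_n}x)$ with the vertical uniformly bounded below, and Proposition~\ref{PcritGreenrelat} then gives
\[
\lim_{n\to\infty}\bigl\|\,p_{\varphi^H_{-t_n}x}\circ D\varphi^H_{-t_n}v\,\bigr\|=+\infty.
\]
Since $v\in T_x\cG(du)$, the pushed vectors $D\varphi^H_{-t_n}v$ live in the bundle $T\cG(du)$ over the compact manifold $\cG(du)$, so the divergence above registers an unbounded Lyapunov growth of the reduced tangent cocycle along the backward orbit of $x$. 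Arranging additionally that the $t_n$ be Poincar\'e return times of $x$ (available for Lebesgue-a.e.\ $x$, after averaging to an invariant probability measure in the Lebesgue class via Krylov--Bogolyubov applied to the restricted flow on $\T^d$), an Anosov-type closing argument on the Kupka--Smale flow $X_{H|\cG(du)}$ then produces hyperbolic periodic orbits accumulating on the orbit of $x$; by Kupka--Smale isolation this forces the orbit of $x$ to itself be periodic, contradicting $x\notin P$. Hence $T_x\cG(du)\subseteq G_+(x)$, whence $T_x\cG(du)=G_+(x)$ by dimension, and the symmetric positive-time version of the dynamical criterion yields $T_x\cG(du)=G_-(x)$.

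The main obstacle in this plan is the closing step: converting the subspace divergence delivered by Proposition~\ref{PcritGreenrelat} (which controls the abstract Lagrangian subspace $D\varphi^H_{-t_n}V$) into a bona fide hyperbolic closed orbit of $X_{H|\cG(du)}$ passing near $x$, and then using Kupka--Smale isolation to force that orbit to coincide with the orbit of $x$. Making this precise is exactly where the construction developed in \cite{Arnaud2014} must be invoked, together with the measure-theoretic care needed to pass from an invariant probability measure on $\cG(du)$ to Lebesgue measure on $\T^d$. All other ingredients---the sandwich inequality derived from Proposition~\ref{PGreen}, the isolation of hyperbolic periodic orbits, the dynamical criterion of Proposition~\ref{PcritGreenrelat}, and Poincar\'e recurrence---are standard.
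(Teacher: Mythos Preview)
Your argument has a genuine gap at the closing step, and the sketched route cannot be completed as stated.

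First, the logical core of your contradiction is: ``hyperbolic periodic orbits accumulate on the orbit of $x$; by Kupka--Smale isolation this forces the orbit of $x$ to be periodic.'' This inference is false. Kupka--Smale isolation means each periodic orbit is isolated \emph{within the set of periodic orbits}; it does not prevent a sequence of (distinct, isolated) periodic orbits from accumulating on a non-periodic point. Any transitive Anosov flow furnishes a counterexample: periodic orbits are dense and all hyperbolic (hence isolated), yet almost every point is non-periodic. Second, the closing step itself is unsupported: an Anosov-type closing lemma requires a uniform hyperbolic structure, whereas all you have extracted from Proposition~\ref{PcritGreenrelat} is that \emph{one} vector in $T_x\cG(du)$ blows up along the backward orbit. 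That is far too weak to close an orbit. Third, the measure-theoretic bridge you invoke (Krylov--Bogolyubov producing an invariant probability ``in the Lebesgue class'' so that Poincar\'e recurrence applies to Lebesgue-a.e.\ $x$) does not exist in general: there is no reason for Lebesgue measure on $\T^d$, transported to $\cG(du)$, to be absolutely continuous with respect to any $\varphi^H_t$-invariant measure.

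The paper's proof takes a completely different route and uses Kupka--Smale much more directly. Since the periodic orbits are hyperbolic, they are finitely many, and the non-wandering set of $X_{H|\cG(du)}$ reduces to their union. Hence $\cG(du)$ decomposes as $\bigcup_{i,j}W^s(\mathcal O_j)\cap W^u(\mathcal O_i)$; the stable manifolds of non-attracting orbits have dimension~$<d$ and thus zero Lebesgue measure. Therefore, for Lebesgue-a.e.\ $x$ the forward orbit converges to an \emph{attracting} periodic orbit (and the backward orbit to a repelling one). Near an attractor one has $\|D\varphi^H_{1|\cG(du)}\|\leq k<1$, so $\bigl(D\varphi^H_{t|\cG(du)}(x)\bigr)_{t>0}$ is bounded; for a genuine periodic attractor one splits along $X_H$ and the strong stable direction. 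The dynamical criterion (Proposition~\ref{PGreencrit} and its time-reversed analogue) then gives $T_x\cG(du)=G_\pm(x)$. No closing argument is needed: the relevant input is \emph{boundedness} of the restricted derivative, obtained from the attractor/repeller structure, not divergence of a single vector.
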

\begin{proof}
 We denote by $\Oc_1, \dots , \Oc_m$ the periodic {(eventually critical)  orbits} that are contained in $\cG(du)$ and by  $W^s(\Oc_i, (\varphi_{t|\cG(du)}^H))$ and $W^u(\Oc_i, (\varphi_{t|\cG(du)}^H))$ their stable and unstable manifolds.

Because the non-wandering  set of $(\varphi^H_{t|\cG(du)})$ is $\Oc_1\cup\dots \cup\Oc_m$, then $$\displaystyle{\cG(du)=\bigcup_{1\leq i, j\leq n} \left( W^s (\Oc_j,(\varphi^H_{t|\cG(du)}))\cap W^u(\Oc_i,(\varphi^H_{t|\cG(du)}))\right)}.$$
  If $\Oc_i$ is not an attractive orbit for $(\varphi^H_{t|\cG(du)})$ then $W^s(x_i,(\varphi^H_{t|\cG(du)})$ is an immersed manifold whose dimension is less that $d$ and then has zero Lebesgue measure. We deduce that there is a dense set $D$ in $\cG(du)$ such that for all $x\in D$, $\varphi^H_t(x)$ tends to a repulsive periodic orbit when $t$ tends to $-\infty$ and tends to an attractive periodic orbit when $t$ tends to $+\infty$.

 Let us consider $x\in D$.% \\

We assume that $(\varphi^H_t(x))$ tends to a critical attractive fixed point $x_0$ when $t$ tends to $+\infty$. We can choose $k\in ]0, 1[$ and a Riemannian metric such that in a neighborhood $\cV$ of $x_0$: $\left\|D\varphi^H_{1|\cG(du)}(y)\right\|\leq k{, ~ \forall y\in \cV}$. If $t\geq T$ is great enough, $\varphi^H_t(x)$ belongs to $\cV$ and $\left\| D\varphi^H_{1|\cG(du)}(\varphi^H_t (x))\right\|\leq k$. We deduce:
$$\forall n\in \N, \| D\varphi^H_{T+n}(x)\|\leq \| D\varphi^H_T(x)\|\prod_{i=0}^{n-1}\| D\varphi^H_{1|\cG(du)}(\varphi^H_{T+i}(x))\|\leq \| D\varphi^H_{T|\cG(du)}(\varphi^H_T(x))\|k^n;$$
hence the sequence $(D\varphi^H_{T+n}(x))_{n\in\N}$ is bounded. %\\

If  $(\varphi^H_t (x))$ tends to a true attractive periodic orbit $\Oc$, then $\Oc$ is a normally hyperbolic (attractive) submanifold for $(\varphi^H_{t|\cG(du)})$. Then there exists $x_0\in \Oc$ such that $x\in W^s(x_0, \varphi^H_{t|\cG(du)})$ (see for example \cite{HPS1977}). Any vector $w$ of $T_x\cG(du)$ can be written as the sum of $\lambda X_H(x)$ where $X_H$ is the Hamiltonian vector field and a vector { $v$ } tangent  to $W^s(x_0, \varphi^H_{t|\cG(du)})$. Then $D\varphi^H_t(x)X_H(x)=X_H(\varphi^H_t (x))$ is bounded and $D\varphi^H_t(x)v$ tends to $0$ when $t$ tends to $+\infty$. Finally, the family  {$(D\varphi^H_{ t|\cG(du)}(x))_{t>0}$} is bounded. By the dynamical criterion, this implies that
$$\forall x\in \cG(du), \quad T_x\cG(du)=G_+(x)$$
and then $u$ is upper Green regular.
\end{proof}

}

The following result is more or less proven in \cite{Arnaud08} (see Proposition~4.18, the statement is different but the proof is similar).

 \begin{prop}
Assume that   $H:T^*\T^2\rightarrow \R$ is a Tonelli Hamiltonian that has a $C^1$ weak K.A.M. solution $u:\T^2\rightarrow \R$ such that all the critical points of $H$ that are contained in the graph of $du$ are hyperbolic for the Hamiltonian flow. Then $u$ is upper Green regular.
\end{prop}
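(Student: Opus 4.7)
The plan is to argue by contradiction, combining Fathi's regularity upgrade, the one-sided inequality from Proposition \ref{tLOGreen}, and the dynamical criterion of Proposition \ref{PcritGreenrelat} with the low-dimensional ($d=2$) structure of the dynamics on the invariant torus $\cG(du)$. First, since $u$ is $C^1$, Fathi's Theorem 4.11.5 (cited in the excerpt) upgrades it to $C^{1,1}$, so $\cG(du)$ is a Lipschitz Lagrangian $2$-torus in $T^*\T^2$ globally invariant under $(\varphi_t^H)$. Alexandrov's theorem yields $D^2u(\theta)$ at Lebesgue a.e.\ $\theta\in\T^2$, and at such $\theta$ the tangent plane $T_{(\theta,du(\theta))}\cG(du)$ is Lagrangian, transverse to the vertical $V$, and has height $D^2u(\theta)$. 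Since $u$ is a fixed point of the Lax-Oleinik semi-group (modulo the additive $\alpha(H)t$), Proposition \ref{tLOGreen} applied with $\lambda=0$ gives, for every such $\theta$ and every $t>0$, $D^2u(\theta)\leq\cH(G_t(\theta,du(\theta)))$; passing to $t\to+\infty$ via Proposition \ref{PGreen} yields the half-sided bound $D^2u(\theta)\leq\cH(G_+(\theta,du(\theta)))$ a.e. Both $T_x\cG(du)$ and $G_+(x)$ being Lagrangian planes transverse to the vertical, they coincide with the graphs of their respective heights, and upper Green regularity (the reverse inequality a.e.) is equivalent to $T_x\cG(du)=G_+(x)$ at a.e.\ $x\in\cG(du)$.

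Suppose, for contradiction, that $E=\{x\in\cG(du):T_x\cG(du)\neq G_+(x)\}$ has Lebesgue-positive projection in $\T^2$, and pick $x\in E$ and $v\in T_x\cG(du)\setminus G_+(x)$. By hypothesis, the critical points of $H$ contained in $\cG(du)$ are finitely many hyperbolic zeros of the restricted Lipschitz vector field; their local stable and unstable manifolds inside the $2$-torus $\cG(du)$ are $1$-dimensional Lipschitz immersed submanifolds whose projections to $\T^2$ have Lebesgue measure zero. Discarding this null set from $E$, I may choose $x$ so that its backward orbit cannot converge to any critical point; by Poincar\'e--Bendixson-type reasoning on the $2$-torus there is then a sequence $t_n\to+\infty$ along which $(\varphi_{-t_n}^H(x))$ stays uniformly bounded away from the critical set, and on this sequence $X_H$ makes a uniformly positive angle with the vertical. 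Proposition \ref{PcritGreenrelat} now applies and forces $\|p_{\varphi_{-t_n}^H(x)}\circ D\varphi_{-t_n}^Hv\|\to+\infty$.

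The main obstacle, and the heart of the argument, is to contradict this blow-up using the two-dimensional setting. Because $\cG(du)$ is invariant, $D\varphi_{-t}^Hv$ lies in $T_{\varphi_{-t}^H(x)}\cG(du)$ for every $t$; fixing a $2$-dimensional Poincar\'e section $\Pi$ transverse to $X_H$ on the energy level, the first-return map $P\colon\Pi\to\Pi$ is area-preserving, and $\Pi\cap\cG(du)$ is a $1$-dimensional Lipschitz $P$-invariant curve whose tangent line field carries the reduced iterates of $v$. The plan is to rule out unbounded growth of $\|DP^{-n}v\|$ along a recurrence subsequence by combining (i) a Krylov--Bogolyubov argument to produce an absolutely continuous $P$-invariant probability measure on a regular component of $\Pi\cap\cG(du)$, using the $C^{1,1}$ regularity of the graph, (ii) Poincar\'e recurrence with respect to that measure to extract $t_{n_k}\to+\infty$ with $\varphi_{-t_{n_k}}^H(x)\to x$, and (iii) area preservation of $P$ together with continuity of the Lipschitz invariant line field, which in dimension two forbid unbounded exponential expansion along a recurrent tangent direction. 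Contradiction with the conclusion of Proposition \ref{PcritGreenrelat} forces $\mathrm{Leb}(\pi(E))=0$, so $u$ is upper Green regular.
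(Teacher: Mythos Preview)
There are two genuine gaps in your argument, and both are handled quite differently in the paper.

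\textbf{The set $W^u$ need not be null.} You assert that the local stable and unstable manifolds of the hyperbolic critical points, taken inside $\cG(du)$, are $1$-dimensional and hence project to a Lebesgue-null set, and you then discard this set. This is false in general. The hypothesis is hyperbolicity for the \emph{full} Hamiltonian flow, so $W^u(s_i)\subset T^*\T^2$ is a $2$-dimensional Lagrangian submanifold; its intersection with the invariant Lipschitz torus $\cG(du)$ can be open in $\cG(du)$ (for instance when $T_{s_i}\cG(du)=T_{s_i}W^u(s_i)$, in which case $s_i$ is a source for the restricted dynamics). Thus $W^u=\bigcup_i W^u(s_i)\cap\cG(du)$ can have positive measure, and the set of points whose backward orbit converges to a critical point cannot simply be thrown away. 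The paper treats this case separately: at non-simple points of $W^u$ the torus is locally tangent to $W^u(s_i)$, and since $T_xW^u(s_i)=G_+(x)$ one gets $T_x\cG(du)=G_+(x)$ there directly.

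\textbf{The recurrence/area-preservation step is not a proof.} For the complementary set $R$, your step (iii) asserts that Krylov--Bogolyubov yields an \emph{absolutely continuous} invariant measure for $P$ on the invariant curve, and that area preservation of the $2$-dimensional return map $P$ together with a Lipschitz invariant line field forbids unbounded expansion along that line. Neither claim is justified: Krylov--Bogolyubov produces \emph{some} invariant measure, with no absolute-continuity control, and area preservation of a $2$-dimensional map places no bound on the derivative along a $1$-dimensional invariant curve (one eigenvalue can grow while the other shrinks). The paper's argument on $R$ is not a single-orbit contradiction at all: it is measure-theoretic. One introduces induced return maps $F_k^m$ to a set $R_k$ bounded away from the critical points, uses the area formula to bound $\int_{E'_k}|\det D(\pi\circ F_k^m(\cdot,du(\cdot)))|\,d{\rm Leb}$ by ${\rm Leb}(\pi(R_k))$, and then Fatou's lemma gives $\liminf_m|\det|<\infty$ at a.e.\ point. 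In dimension $d=2$, controlling this determinant together with the flow direction (which is uniformly transverse to the vertical on $R_k$) controls the reduced norm, and Proposition~\ref{PcritGreenrelat} then forces $T_x\cG(du)\subset G_+(x)$ a.e.\ on $R$. This Jacobian/Fatou mechanism is the missing idea in your proposal.
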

\begin{proof}
As the critical points of $H$ contained in $\cG(du)$ are hyperbolic for the Hamiltonian flow, their set $\cS=\{ s_1, \dots , s_n\}$ is finite.\\
We denote by $W^u$ the union of the unstable sets of the critical points of $H$ in $\cG(du)$
$$W^u=\bigcup_{i=1}^n W^u(s_i)\cap \cG(du).$$
Observe that $W^u$ and $R=\cG(du)\backslash W^u$ are measurable sets. The strategy is then to show that at Lebesgue almost every $q$ in $\pi(W^u)$ and $\pi (R)$, we have $T_{(q, du(q))}\cG(du)=G_+(q, du(q))$. We will conclude that $u$ is upper Green regular.

{\bf Case of $\pi(R)$.} For every $i\in [1, n]{\cap\mathbb{Z}}$, we construct a decreasing sequence $(D_k(i))_{k\in\N}$ of open discs that are centered at $\pi(s_i)$ and denote by $(\tilde D_k(i))_{k\in\N}$ its lift to $\cG(du)$. We denote by $f=\varphi_{-1}^H$ the time -1 flow. We also use the notations $\tilde D_k=\bigcup_{i=1}^n\tilde D_k(i)$ and $R_k=R\backslash  \tilde D_k$. For every $x\in R$ and $m\geq 1$, we introduce the notation
$$n_m(x)=\min\{ n\geq 1; f^n(x)\in R_{k}\}$$ and $F_m^k(x)=f^{n_m(x)}(x)$ when $n_m(x)\not=+\infty$. 
Then, for every $x\in R$, there exists $k_0\geq 0$ such that  the $F_k^m(x)$ are defined for every $m\geq 1$ and every $k\geq k_0$. Hence, if $E_k$ is the set of elements of $R_k$ for which $F_k^m$ is defined for every $m$, we have
$$\bigcup_{k=0}^ \infty E_k=R.$$
We know by \cite{Fathi2003} that $du$ is Lipschitz and then differentiable Lebesgue almost everywhere by Rademacher Theorem. Then if we use the notations 
$E'_k=\{ q\in \pi(E_k); D^2u(q)\quad{\rm exists}\}$ and $\tilde E'_k=\{ (q, du(q)); q\in E'_k\}$, we know that $\displaystyle{ \bigcup_{k\in\N}E'_k}$ has full Lebesgue measure into $\pi(R)$. \\
We have then
$${\rm Leb}(\pi(R_k))\geq {\rm Leb}(\pi(F_k^m(\tilde E'_k)))=\int_{E'_k}d\left(\pi\circ F_k^m( \cdot ,du(\cdot))\right)^*{\rm Leb},$$
and so
$$\begin{matrix}{\rm Leb}(\pi(R_k))&\geq \int_{E'_k}\left| {\rm det}\left(D(\pi\circ F_k^m( \cdot ,du(\cdot))\right)\right| d{\rm Leb}\\
&= \int_{E'_k}\left| {\rm det}\left(D(\pi\circ\varphi^H_{-n_m})(\cdot ,D^2u)\right)\right| d{\rm Leb}.\end{matrix}
$$
We deduce from Fatou lemma that at Lebesgue almost everywhere point $q$ in $E'_k$, we have
\begin{equation}\label{Inegdet}\liminf_{m\rightarrow+\infty }\left| {\rm det}\left(D(\pi\circ\varphi^H_{-n_m})(q, du(q))\right)_{T_{(q, du(q))}\cG(du)}\right| <+\infty.\end{equation}Using the definition of $R_k$, let us note that there exists a constant $C_k$ such that 
\begin{equation}\label{Inegchamp}\forall x, y\in R_k, \frac{1}{C_k}\leq \frac{\| X_H(x)\|}{\| X_H(y)\|}\leq C_k.\end{equation}
We then use a symplectic reduction on the energy level of $x$ by $X_H$ as explained in Subsection \ref{ssGreencrit}. 

Let us denote by $\ell$ a Lipschitz constant for $du$. Observe that
\begin{equation}\label{Edet}
\left| {\rm det}\left(D(\pi\circ\varphi^H_{-n_m})(\cdot ,D^2u)\right)\right|\geq
 \frac{1}{1+\ell}\left| {\rm det}\left(D\varphi^H_{-n_m T\cG(du)}\right)\right|
\end{equation}
We deduce from equations (\ref{Inegdet}), (\ref{Inegchamp}) and (\ref{Edet}) that
\begin{equation}\liminf_{m\rightarrow+\infty }\left\| \left(p_{\varphi_{-n_m(q, du(q))}^H}\circ D \varphi^H_{-n_m})(q, du(q))\right)_{T_{(q, du(q))}\cG(du)}\right\|<+\infty.\end{equation}
Using Proposition \ref{PcritGreenrelat}, we deduce that $T_{(q, du(q))}\cG(du)\subset G_+(q, du(q))$ and then  $T_{(q, du(q))}\cG(du)=G_+(q, du(q))$.

{\bf Case of $\pi(W^u)$.} We denote by $W^u_{\rm loc}$ the intersection of $\cG(du)$ with the union of the local unstable submanifolds of the $s_i$. If $x\in W^u$,  there exists a positive $T>0$ such that $x\in \varphi_T^H(W^u_{\rm loc})$. Then we have two cases.\\
1) We say that $x$ is {\em simple} if there exists a neighborhood $U_x$ of $x$ in $\cG(du)$ such that the only points of $U_x\cap\varphi_T^H(W^u_{\rm loc})$ are on the orbit of $x$. Observe that the set of simple orbits is countable and thus the projection of the set of simple points has zero Lebesgue measure.\\
2) Let $W'$ be the set of non simple points of $W^u$ at which $\cG(du)$ has a tangent subspace. The projection of this set has full Lebesgue measure in $\pi(W^u)$. If $x\in W'$, then for some $i$ we have $x\in \varphi_T^H(W^u(s_i))$ and because $x$ is not simple, we deduce that $T_x\cG(du)=T_x W^u(s_i)$. As $T_x W^u(s_i)=G_+(x)$, we obtain the wanted result.
\end{proof}

\subsection{An example where the convergence is not $C^2$-uniform}\label{ssnotunif}
We will show that for the pendulum, the dependance on the cohomology class is not continuous for the $C^2$ uniform topology. The Hamiltonian is given by
$$H(q, p)=\frac{1}{2}p^2 +\cos(2\pi q).$$
We use the notation $I_+=\int_0^1\sqrt{2(1-\cos(2\pi q))}dq$.  Then the map\\
 $c:[1, +\infty)\rightarrow [I_+, \infty)$ defined by 
$$c(e)=\int_0^1\sqrt{2(e-\cos(2\pi q))}dq.$$ is a homeomorphism and even a diffeomorphism when restricted to $(1, +\infty)$. \\
For every $I\in [I_+, +\infty)$, the function
$$u_I(q)=\int_0^q\left( \sqrt{2(c^{-1}(I)-\cos(2\pi s))}-I\right)ds$$
is the unique (up to the addition of a constant) weak K.A.M. solution for $T^{I}$. Observe that every $u_I$ is $C^1$.\\
Moreover, $u'_{I_+}$ is smooth on $(0, 1)$ and  because of the dynamical criterion in Proposition \ref{PGreencrit}, we have for every $q\in (0, 1)$
$$\R(1, u_{I_+}^{\prime\prime} (q))=T_{(q, I_++u'_{I_+}(q))}W^u(0, 0)=G_+(q, I_++u'_{I_+}(q)).$$
Hence $u_{I_+}$ is upper Green regular (and also Green lower regular).\\
There exists $\alpha\in (0, 1)$ such that for every $q\in (0, \alpha)$, we have 
$$u_{I_+}^{\prime\prime}(q)={2\pi}\frac{\sin(2\pi q)}{\sqrt{2(1-\cos(2\pi q))}}>\frac{1}{2}.$$
For $I>I_+$, $u_I$ is smooth and $u_I'(q)= \sqrt{2(c^{-1}(I)-\cos(2\pi q))}-I$ attains its minimum at $q=0$ where $u_I^{\prime\prime}(0)=0$. Hence there exists $\alpha(I)\in (0, \alpha)$ such that
$$\forall q\in [0, \alpha(I)], u_I^{\prime\prime}(q)<\frac{1}{4}.$$
We then deduce 
$$\forall I>I_+, \quad \| u_I^{\prime\prime}-u_{I_+}^{\prime\prime}\|_{L^\infty}\geq  \left\| (u_I^{\prime\prime}-u_{I_+}^{\prime\prime})\big|_{(0, \alpha(I))}\right\| \geq  \frac{1}{4}.$$
We don't have continuous dependence of $u_I$ on $I$ for the uniform $C^2$ distance.

\subsection{Examples of  weak K.A.M. solutions that are not upper Green regular nor lower Green regular and to which the Lax-Oleinik semi-group doesn't $d_{2,1}$-converge.}\label{ssnotup}
{Let $\cS$ be a closed surface with negative curvature. Let us denote by $M=T^1\cS$ its unitary tangent bundle and by $X$ the geodesic vector field. We then consider the Ma\~ n\'e Lagrangian (see \cite{Mane1992}) $L:TM\rightarrow \R$ that is defined by
$$L(q, v)=\frac{1}{2}\| v-X(q )\|^2.$$
The corresponding Hamiltonian is given by
$$H(q, p)=\frac{1}{2}\| p\|^2+p.X(q).$$
Observe that the critical level is $\cH=\{ H=0\}$ because this level contains an exact Lagrangian graph (see \cite{Fathi2008}).\\
Then $0$ is a weak K.A.M. solution. We denote by $Z$ the zero section in $T^*M$. The set  $Z$ is hyperbolic for the restriction of $(\varphi_t^H)$ to the energy level $\cH=\{ H=0\}$. We denote by $E^s$, $E^u$ the 3-dimensional stable and unstable bundles along $Z$: they contain the vector field direction and also the strong stable (unstable) bundle. By \cite{Arnaud12}, we have $E^u(x)=G_+(x)$ and $E^s(x)=G_-(x)$. As {$(\varphi^H_{t|Z})$} is Anosov, the intersection of $E^u(x)$ (resp. $E^s(x)$) with $T_xZ$ is 2-dimensional and then we have
\begin{equation}\label{Enupper} \forall x\in Z, G_+(x)\not=T_xZ.\end{equation}
So $u$ is nowhere upper Green regular.\\

 Let us now prove that $u$ is the only weak K.A.M. solution (up to the addition of a constant).\\
As the flow $(\psi_t)$ of $X$ is transitive, the projected Aubry set for $H$ is the whole {$M$}. To prove that, we use the characterization of the projected Aubry set that is given in \cite{Fathi2008}. Let $q_0\in{ M}$ be any point. As $(\psi_t)$ is transitive, for every neighborhood $V$ of $q_0$ and any $T>0$, there exist $q\in V$ and $t\geq T$ such that $q, \psi_t(q)\in V$. Let $\gamma:[0, t+\varepsilon]\rightarrow {M}$ be the closed arc that is made with the three following pieces.
\begin{enumerate}
\item the straight segment that joins $q_0$ to $q$ with unitary derivative;
\item the arc of orbit $(\psi_sq)_{s\in [0, t]}$;
\item the straight segment that joins $\psi_t(q)$ to $q_0$ with unitary derivative.
\end{enumerate}
The Lagrangian action of the first and third parts of this arc are very small, and the second one is zero because we have a piece of orbit. Hence the action of $\gamma$ can be very small. Hence $q_0$ belongs to the Aubry set.\\
This implies that, up to the addition of a constant, there is only one weak K.A.M. solution, and so the only weak K.A.M. solutions are the constant functions.\\

We will now build an example of an initial condition $u$ for the Lax-Oleinik semi-group such that the conclusion of Theorem \ref{TC2convLO} is not satisfied, i.e. such that the family $(d_{2,1}( T_t u,0))_{t\in [0, +\infty[}$ doesn't tend to 0 when $t$ tends to $+\infty$.\\
%Because $(\psi_t)$ is Anosov and transitive, the set of periodic points is dense in $M$. 
We choose a large set of   points $(q_1, 0)$, \dots, $(q_n, 0)$ in $Z$, we % and we denote by $\tau_i$ the period of $x_i=(q_i, 0)$. We 
 fix some $T>0$ and we introduce the following functions.

\begin{nota}
The Lagrangian action is denoted by $a_T(q_1, q_2)= \inf\int_0^TL(\gamma(t), \dot\gamma(t))dt$  where the infimum is taken over all the absolutely continuous curves $\gamma:[0, T]\rightarrow M$ such that $\gamma(0)=q_1$ and $\gamma (T)=q_2$.
\end{nota} 
The $a_T$ is semi-concave and then Lipschitz (see \cite{Bernard08}).  {Define}
\begin{itemize}
\item $u_i^T(q)=a_T(q_i, q)$;
\item $u^T(q)=\min\{ u_i^T(q); 1\leq i\leq n\}$.
\end{itemize}
All these functions are non-negative and $K_T$-semi-concave. By Lemma \ref{Limvert}, we have 
$$\forall i\in \{ 1, \dots, n\}, \quad  \cG(du_i^T) \subset \varphi_T^H(T_{q_i}^*M);$$
and so because of semi-concavity
\begin{equation}\label{Egraphvert} \cG(du^T) \subset \bigcup_{i=1}^n\varphi_T^H(T_{q_i}^*M).\end{equation}
Note that $u_i^T(\psi_{T}(q_i))=0$ and so for every $i\in \{ 1, \dots, n\}$, we have $u^T(\psi_{T}(q_i))=0$. Because $u^T$ is $K_T$-semi-concave, non-negative and vanishes at the points $\psi_{T}(q_i)$, if we choose the $q_i$'s in such a way that the $\psi_{T}(q_i)$ are $\epsilon$-dense in $M$ for a small $\varepsilon$,  then {$u^T$} is $C^0$ close to $0$.\\

Let us now prove that $u^T$ can be chosen such that the graph of $du^T$ is in a small  neighborhood of the zero section.  We denote by $\cM_T$ the set of $T$-minimizing orbits for the Euler-Lagrange flow $(f_t^L)_{t\in \R}$.
$$\cM_T=\left\{ (f_T^L(x))_{t\in [0, T]}; ~\left( \pi\circ f_T^L(x)\right)_{t\in [0, T]}\quad{\rm is}\quad{\rm minimizing}\right\}.$$
Observe that $\cM_T$ is compact. We can endow it as well with the $C^0$ or $C^1$ topology that are equal. We have
\begin{itemize}
\item $\forall \Gamma\in\cM_T,  \quad A_L(\Gamma)=\int_0^TL\circ \Gamma(t)dt\geq 0$;
\item $\forall \Gamma\in\cM_T,  \quad A_L(\Gamma)= 0\Leftrightarrow \Gamma([0, T])\subset \cG(X)$.
\end{itemize}

We introduce the notation 
$$\cZ_T=\{ \Gamma\in \cM_T; \Gamma([0, T])\subset \cG(X)\}.$$
Then $\cZ_T=\{ \Gamma\in \cM_T; A_L(\Gamma)=0\}$ is compact. We now fix a small neighborhood $\cN_T$ of $\cZ_T$ in $\cM_T$. We introduce
$$\varepsilon=\frac{1}{2}\inf\{ A_L(\Gamma); \Gamma\in \cM_T\backslash \cN_T\}.$$
Then $\varepsilon>0$. We choose $\alpha>0$ such that
$$\forall q, q_1, q_2\in M, d(q_1, q_2)<\alpha {\Rightarrow} |a_T(q, q_1)-a_T(q, q_2)|<\varepsilon.$$
We choose a finite number of points $x_1, \dots, x_n\in \cG(X)$ { on the graph of the vector-field $X$} such that
$$M\subset \bigcup_{i=1}^nB(\pi\circ f_T^L(x_i), \alpha).$$
and use the notation $x_i=(q_i, X(q_i))$. Then we define the $u_i^T$'s and $u^T$  as before. Let us consider $q\in M$. Then $q$ belongs to some ball $B(\pi\circ f_T^L(x_i), \alpha)$ and so we have
$$u_i^T(q)=a_T(q_i, q)-a_T(q_i, \pi\circ f^L_T(x_i))<\varepsilon.$$
We deduce that $u^T(q)\leq u^T_i(q)<\varepsilon$. Let $j\in\{ 1, \dots, n\}$ be such that $u_j^T(q)=u^T(q)$. We have $u_j^T(q)=a_T(q_j, q)<\varepsilon$. Hence for every $\Gamma\in \cM_T$ such that $\pi\circ \Gamma(0)=q_j$, $\pi\circ \Gamma(T)=q$, we have $\Gamma\in\cN_T$.\\
 If now $du^T(q)$ exists, we have $du^T(q)=du_j^T(q)$, the minimizing $\Gamma$ is unique and we denote $\gamma=\pi\circ \Gamma$, then $\dot\gamma$ is $C^0$-close to $X\circ \gamma$ because $\Gamma\in\cN_T$. By using Legendre map, this implies that $du^T_j\circ \gamma$ is $C^0$-close to the zero section and then $du^T(q)=du^T_j(\gamma(T))$ is close to zero.\\

 So we have proved that we can assume that the graph of $du^T$ is contained in a neighborhood $\cN$ of the zero section that is as small as we want. By \cite{Bernard08}, observe that 
\begin{equation}\label{EBernard1}\forall t\geq 0, T_tu^T=u^{t+T} \quad{\rm and}\quad \cG(du^{T+t})\subset \varphi_t^H(\cG(du^T)).\end{equation}
 By continuity of the flow and compacity of the closure of $\cG(du^T)$, there exists a small $\tau>0$ such that
 $$\forall t\in [0, \tau], \quad \cG(du^{T+t})\subset \varphi_t^H(\cG(du^T))\subset \cN.$$
 We now use Lemma 7 of \cite{Arnaud2005} and find some $\beta>0$ such that 
 $$\forall u\in C^0(M, \R), \quad \| u\|_\infty<\beta\Rightarrow \cG(dT_\tau u)\subset \cN.$$
We can assume that $u^T$ satifies $\| u^T\|_\infty <\beta$. Then for every $t\geq \tau$, we have 
$$\| T_{t-\tau}u^T -0\|_\infty=\|T_{t-\tau}u^T-T_{t-\tau}0\|_\infty\leq \| u^T\|_\infty<\beta$$
because of the non-expansiveness of the Lax-Oleinik semi-group (see \cite{Fathi2008}). We deduce
$$\forall t\geq \tau,\quad \cG(du^{t+T})=\cG(dT_\tau(T_{t-\tau}u^T ))\subset \cN.$$
We have then proved that 
\begin{equation} \label{EgraphN}\forall t\geq 0, \cG(dT_tu^T)\subset \cN. \end{equation}\\

Let us recall that the flow $(\psi_t)$ is Anosov. This implies that the cocycle that we will now introduce is hyperbolic on $Z$.

 The cocycle is defined in a fiber bundle over a neighborhood $\cN$ of the zero section $Z$ in $T^*M$. At a point $x\in \cN$, we consider the tangent space $T_x\cH_x$ of the energy level $\cH_x=\{ H=H(x)\}$. Then {$E_x$} is defined as being the reduced linear space $T_x\cH_x/\R. X_H(x)$ endowed with the quotient norm $\|\cdot\|$ and the corresponding projection is denoted by $p_x: T_x\cH_x\rightarrow E_x$.

 As we take the quotient of an Anosov flow by the vector field, the corresponding reduced cocycle $(M_t)$ of $(D\varphi^H_t)$ restricted to $Z$ is hyperbolic, and has an invariant splitting $E=E^s\oplus E^u$ where the stable and unstable bundles are 2-dimensional. By \cite{Yoccoz1995}, we can translate the hyperbolicity condition by using some cones. This is an open condition and we can extend these cones to a neighborhood $\cN$ of $Z$ such that
\begin{itemize}
\item there exists a continuous splitting $E=E^1\oplus E^2$ on $\cN$ that coincides with $E=E^s\oplus E^u$ on $Z$ and two norms $|\cdot|_i$ on $E^i$ such that $$C_x=\{ v=v_1+v_2, v_1\in E_x^1, v_2\in E_x^2, |v_1|_{1, x}\leq |v_2|_{2, x}\};$$
the family $(C_x)_{x\in \cN}$ is the associated cone field; the dual cone field is the family $(C^*_x)_{x\in \cN}$ defined by $C_x^*=E_x\backslash {\rm int}C_x$.
\item for some constant $c>0$, we have for every $x\in \cN$, $v_1\in E^1_x$ and $v_2\in E^2_x$
$$c^{-1}\| v_1+v_2\|\leq \max\{ |v_1|_{1, x}, |v_2|_{2,x}\}\leq c\| v_1+v_2\|_x.$$
\item there exists an integer $m\geq 1$ and two constants $\lambda, \mu>1$ so that
\begin{enumerate}
\item for $x\in \cN$, $M_1(C_x)\subset \widetilde C_{\lambda,\varphi^H_{1}(x)}$ where
$$\widetilde C_{\lambda,x}=\{ v=v_1+v_2\in E_x; \lambda|v_1|_{1, x}\leq |v_2|_{2, x}\};$$
\item for $x\in \cN$, for $v\in C_x$, $\| M_m(v)\|_{\varphi^H_m(x)}\geq \mu.\| v\|_x$;
\item for $x\in \cN$, for $v\in C^*_x$, $\| M_{-m}(v)\|_{\varphi^H_{-m}(x)}\geq \mu.\| v\|_x$.
\end{enumerate}
\end{itemize}
Following \cite{Arnaud12}, we now introduce some notations.

\begin{notas}
\begin{itemize}
\item for $x\in\cN$, we denote by {$v(x)$} the $p$-projection of the intersection of the vertical with the tangent space to the energy level $v(x)=p_x\left( T_x\cH_x\cap V(x)\right)$;
\item when $\varphi_s^H(x)\in\cN$ for every $s$ between $0$ and $-t$, we denote by $g_t(x)$ the subspace $M_t(\varphi_{-t}^Hx){v(\varphi_{-t}^Hx) }$. Moreover
\begin{itemize}
\item if $\varphi_s^H(x)\in\cN$ for every $s\in (-\infty, 0)$ and $g_u(x)$ is transverse to $v(x)$ for every $u>0$, then $g_+(x)=\lim_{t\rightarrow +\infty}g_t(x)$ exists and is a reduced Green bundle; for $x\in Z$, we have $g_+(x)=E^u(x)=E^2(x)$;
\item if $\varphi_s^H(x)\in\cN$ for every $s\in (0, +\infty)$ and $g_u(x)$ is transverse to $v(x)$ for every $u<0$, then $g_-(x)=\lim_{t\rightarrow -\infty}g_t(x)$ exists and is a reduced Green bundle; for $x\in Z$, we have $g_-(x)=E^s(x)=E^1(x)$.
\item for every $x\in Z$, we also use the notation $\cH_x=p_x(T_xZ)$ and denote by $\cH$ the corresponding bundle over $Z$.
\end{itemize}
\end{itemize}

\end{notas}
On $Z$, $g_-=E^1$ is well defined and transverse to $v(x)$. The hyperbolicity of $(M_t)$ on $Z$ implies that for every $m\geq 1$, there exists some $n>0$ such that \begin{equation}\label{EGreencone} \forall x\in Z, \quad g_n(x)=M_n(v(\varphi_{-n}^Hx))\in \widetilde C_{\lambda^{m+1}, x},\end{equation}
and we can also assume that 
\begin{equation}\label{Econecone}\forall x\in Z, \quad M_n( \widetilde C_{\lambda^m, x})\subset \widetilde  C_{\lambda^{m+1}, \varphi_n^Hx}.\end{equation}
Observe that $E^2$ is different from $\cH_x$ (because of Equation (\ref{Enupper})). Hence we can choose $m\in\N$ large enough such that 
\begin{equation}\label{nupper2}\forall x\in Z, \quad \cH_x\nsubset \widetilde C_{\lambda^m, x}.\end{equation}
We now choose an eventually smaller neighborhood $\cN$ of $Z$ that satisfies the following conditions, where we assume that we choose a metric  on $E_\cN$ that allows us to compare tangent vectors of different fibers.
\begin{equation}\label{Efarcone} \forall x\in\cN,{\exists w\in \cH_x, \quad \| w\|=1 , d(w, \widetilde C_{\lambda^m, x} ) >\varepsilon_0 } %\forall w\in \widetilde C_{\lambda^m, x}, \quad \| w\|= 1\Rightarrow d(w, \cH)>\varepsilon_0;
\end{equation}
for some $\varepsilon_0>0$ because of Equation (\ref{nupper2});
\begin{equation}\label{Evertcone}\forall x\in\cN, \quad  g_n(x)=M_n(v(\varphi_{-n}^Hx))\in \widetilde C_{\lambda^{m}, x}\end{equation}
because of Equation (\ref{EGreencone});
\begin{equation}\label{Econecone2}\forall x\in\cN, \quad M_n( \widetilde C_{\lambda^m, x})\subset \widetilde  C_{\lambda^{m}, \varphi_n^Hx} \end{equation}
because of Equation (\ref{Econecone}).

We  now choose $u^T$ depending on $\cN$ as before. We have proved that for every $t\geq 0$, $\cG(dT_tu^T)\subset \cN$ (see Equation (\ref{EgraphN})). We also have by Equation (\ref{EBernard1}) that
$$\forall t\geq 0, \forall s\in [0, t],  \quad \varphi_{s}^H\left( \cG(dT_tu^T)\right)\subset \cG(dT_{t-s}u^T)\subset\cN$$
and by Equation (\ref{Egraphvert})
\begin{equation}\label{EGreenpseudo}\forall  t>0, \forall x\in \cG(dT_tu^T), ~T_{\varphi_tx}\cG(dT_tu^T)=D\varphi_t\left( (T_x\cG(du^T)\right)=G_t(\varphi_t^Hx).\end{equation}
We deduce from Equations (\ref{EGreenpseudo}),  (\ref{Evertcone}) and (\ref{Econecone2}) that 
$$\forall k\in\N, \forall x\in \cG(dT_{nk}u^T), \exists D^2T_{nk}u^T(x)\Rightarrow p_x\left( T_x\cG(dT_{nk}u^T)\right)\subset \widetilde C_{\lambda^m, \varphi_{nk}^Hx}$$
and then by Equation (\ref{Efarcone})
$$\forall k\in\N, \forall x\in \cG(dT_{nk}u^T), \exists D^2T_{nk}u^T(x)\Rightarrow \| D^2T_{nk}u^T(\pi(x))\|\geq \varepsilon_0.
$$
and then the quantity $d_{2,1}( T_tu^T,0)$ doesn't tend to $0$ when $t$ tends to $+\infty$, hence doesn't satisfy the conclusion of Theorem \ref{TC2convLO}.
 }

\section{Proof of the $C^1$ convergence}

\subsection{Proof of Theorems  \ref{TC1discounted} and \ref{TC1cohom}} We extend the ideas that were introduced in \cite{Arnaud2005} to a more general setting.\\
We consider a map $c\in C \mapsto H_c$ defined on some compact metric space $C$ that is continuous for the $C^2$ open-compact topology and such that every $H_c: T^*M\rightarrow \R$ is a Tonelli Hamiltonian. The associated Lagrangian is denoted by $L_c$. We will denote by $(\varphi_t^{c, \lambda})$ the flow associated to the $\lambda$-discounted equation for $H_c$.
\begin{prop}\label{Ppseudog}
{ Let us fix $t>0$ and $(u_0, c_0, \lambda_0)\in C^0(M, \R)\times C\times \R_+$ with $u_0$ fixed point of the semi-group $(T^{c_0, \lambda_0}_\tau)$. For every $\varepsilon>0$, there exists a neighborhood $\cN$ of $(u_0, c_0, \lambda_0)$ such that for every $(u, c, \lambda)\in\cN$, we have
$$d_H(\overline{\cG(dT^{c,\lambda}_tu)},  \overline{\cG(du_0)})\leq \varepsilon.$$}
\end{prop}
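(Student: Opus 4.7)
The plan is to prove the two Hausdorff inequalities separately: the forward one $\rho(\overline{\cG(dT^{c,\lambda}_t u)},\overline{\cG(du_0)})\leq\varepsilon$ and the reverse one $\rho(\overline{\cG(du_0)},\overline{\cG(dT^{c,\lambda}_t u)})\leq\varepsilon$. The common backbone is a priori compactness. Since $u$ is $C^0$-close to $u_0$ and hence uniformly bounded, every minimizer $\gamma:[-t,0]\to M$ in (\ref{EminDLO}) has uniformly bounded Lagrangian action, so the argument of Proposition \ref{Papcomp} and Corollary \ref{Capcomp} goes through and yields a compact set $\mathscr{K}_t\subset T^*M$ containing the whole Hamiltonian trajectory $(\varphi^{c,\lambda}_s(q^-,p^-))_{s\in[-t,0]}$ uniformly in $(u,c,\lambda)$ close to $(u_0,c_0,\lambda_0)$.

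For the forward direction I would argue by contradiction. Suppose $(c_n,\lambda_n,u_n)\to(c_0,\lambda_0,u_0)$ and $(q_n,p_n)\in\cG(dT^{c_n,\lambda_n}_t u_n)$ satisfies $d\bigl((q_n,p_n),\overline{\cG(du_0)}\bigr)\geq\varepsilon$. Writing $(q_n,p_n)=\varphi^{c_n,\lambda_n}_t(q_n^-,p_n^-)$ with $p_n^-=\frac{\partial L_{c_n}}{\partial v}(\gamma_n(-t),\dot\gamma_n(-t))$, the compactness step lets me extract subsequences so that $(q_n^-,p_n^-)\to(q^-,p^-)$, $(q_n,p_n)\to(q^*,p^*)=\varphi^{c_0,\lambda_0}_t(q^-,p^-)$, and $\gamma_n\to\gamma$ in $C^1([-t,0],M)$. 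Passing to the limit in the identity
\[
T^{c_n,\lambda_n}_t u_n(q_n)=e^{-\lambda_n t}u_n(q_n^-)+\int_{-t}^0 e^{\lambda_n s}\bigl[L_{c_n}(\gamma_n,\dot\gamma_n)+\alpha(H_{c_n})\bigr]\,ds,
\]
and using joint $C^0$-continuity of $(u,c,\lambda)\mapsto T^{c,\lambda}_t u$ together with $u_0=T^{c_0,\lambda_0}_t u_0$, the curve $\gamma$ becomes a $T^{c_0,\lambda_0}_t$-calibrated curve for $u_0$. The endpoint analysis recalled after (\ref{EminDLO}) then says that $p^*$ is a super-differential of $u_0$ at $q^*$. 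Since $u_0$ is semi-concave, the closure $\overline{\cG(du_0)}$ coincides with the graph of super-differentials $\{(q,p):p\in D^+u_0(q)\}$, hence $(q^*,p^*)\in\overline{\cG(du_0)}$, a contradiction.

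For the reverse direction it suffices by density to approach any $(q,p)\in\cG(du_0)$. Given $(c_n,\lambda_n,u_n)\to(c_0,\lambda_0,u_0)$, Rademacher's theorem yields a differentiability point $q_n$ of $T^{c_n,\lambda_n}_t u_n$ arbitrarily close to $q$, and the corresponding $p_n=dT^{c_n,\lambda_n}_t u_n(q_n)$ is a super-differential bounded by the uniform Lipschitz constant of these semi-concave functions. Upper semi-continuity of super-differentials under the $C^0$-convergence $T^{c_n,\lambda_n}_t u_n\to u_0$ forces any cluster point of $(p_n)$ to lie in $D^+u_0(q)=\{p\}$, whence $(q_n,p_n)\to(q,p)$.

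The main obstacle is the passage to the limit leading to $u_0$-calibration of $\gamma$: this requires the joint continuity of $(u,c,\lambda,q)\mapsto T^{c,\lambda}_t u(q)$ together with the continuity of $c\mapsto\alpha(H_c)$ and of $c\mapsto L_c$ in $C^2_{\mathrm{loc}}$. The a priori compactness above reduces the infimum in (\ref{EminDLO}) to curves in a fixed compact subset of $TM$, after which continuous dependence of ODE solutions on parameters delivers the conclusion; identifying $\overline{\cG(du_0)}$ with the super-differential graph of the semi-concave fixed point $u_0$ is what ties the limit argument to the conclusion of the proposition.
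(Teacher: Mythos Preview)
Your sequential compactness approach is sound in outline, but the forward direction contains a genuine gap: the assertion that for a semi-concave function $u_0$ the closure $\overline{\cG(du_0)}$ coincides with the super-differential graph $\{(q,p):p\in D^+u_0(q)\}$ is false. For semi-concave functions one has $D^+u_0(q)=\mathrm{conv}\,D^*u_0(q)$, where $D^*u_0(q)$ is the set of reachable gradients, and it is only the latter that describes the fiber of $\overline{\cG(du_0)}$ over $q$. Already for a one-dimensional weak K.A.M. solution with a corner the super-differential is a full interval while the reachable gradients are just the two one-sided derivatives. So deducing merely $p^*\in D^+u_0(q^*)$ from the endpoint analysis after (\ref{EminDLO}) does not yield $(q^*,p^*)\in\overline{\cG(du_0)}$.

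The repair is to exploit the fixed-point property of $u_0$ more fully. Once your limit curve $\gamma$ is $(c_0,\lambda_0)$-calibrated for $u_0$, then for every $\tau\in(0,t]$ the restriction $\gamma|_{[-\tau,0]}$ is a minimizer in the definition of $T^{c_0,\lambda_0}_\tau u_0=u_0$, so by the paragraph after (\ref{EminDLO}) the function $u_0$ is differentiable at $\gamma(-\tau)$ with $du_0(\gamma(-\tau))=\frac{\partial L_{c_0}}{\partial v}(\gamma(-\tau),\dot\gamma(-\tau))$. Letting $\tau\to 0^+$ gives $(\gamma(-\tau),du_0(\gamma(-\tau)))\to(q^*,p^*)$, hence $(q^*,p^*)\in\overline{\cG(du_0)}$ directly. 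This is exactly the $\tau\to 0$ step the paper performs via the chain $\cF_t(u_0,c_0,\lambda_0)\subset\cF_\tau(u_0,c_0,\lambda_0)\subset\varphi_\tau^{c_0,\lambda_0}(\overline{\cG(du_0)})$ to obtain $\cF_t(u_0,c_0,\lambda_0)=\overline{\cG(du_0)}$; your calibration argument is the same idea applied to a single limiting curve rather than to the whole argmin set. Your reverse direction is correct, though it implicitly relies on uniform semi-concavity of the family $T^{c,\lambda}_t u$ (needed to pass super-differentials through $C^0$ limits) and on a finite-cover argument over the compact $\overline{\cG(du_0)}$ to upgrade pointwise approximation to the uniform one required by the Hausdorff distance.
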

To finish the proofs of Theorems \ref{TC1discounted} and \ref{TC1cohom}, we have to apply Proposition \ref{Ppseudog} when
\begin{itemize}
\item either the space $C$ is only one point, $u=u_\lambda$ is the discounted solution and $u_0$ the limit weak K.A.M. solution;
\item or $\lambda=0$ is fixed and $c\in H^1(M, \R)$, $u=u_c$ is a weak K.A.M. solution for the cohomology class $c$.
\end{itemize}

\begin{proof}
We recall that because of the a priori compactness Lemma (Corollary \ref{Capcomp}), there exists for every $t>0$ a compact subset $K_t\subset T^*M$ such that, for every $c\in C$ and $\lambda\in \Lambda$ where $\Lambda$ is any compact subset of $\R$, any minimizing orbit $(\varphi_s^{c, \lambda}(q, p))_{s\in [0, t]}$ takes all its values in $K_t$. Observe that if $T>t$, we can choose $K_T=K_t$.

Let us fix $t>0$. We define the map
 $\cM_t:T^*M\times C\times  \R_+\rightarrow \R$ by
 $$\cM_t(q, p, c, \lambda)=\int_{-t}^0e^{\lambda s} L_c (\pi\circ\varphi_{s}^{c, \lambda}(q, p), \frac{\partial}{\partial s}(\pi\circ\varphi_{s}^{c, \lambda}(q, p)))ds.$$
 Observe that this map is continuous with respect to all the variables.\\

 We have for every $t\geq T>0$ and every continuous map $u:M\rightarrow \R$
\begin{equation}\label{EC1discounted} T_t^{c, \lambda}u(q_0)=\min_{(q_0, p)\in K_T} \left(e^{-\lambda t}u(\pi\circ \varphi_{-t}^{c, \lambda}(q_0, p))+\cM_t(q_0, p, c, \lambda)\right).\end{equation}
  We now define for every $u\in C^0(M, \R_+)$, $(c, \lambda)\in C\times \R_+$ and $t\geq T>0$ the map
 $\cU_t(u, c, \lambda):K_T\rightarrow \R$ by
 $$\cU_t(u, c, \lambda)(q, p)=e^{-\lambda t}u(\pi\circ \varphi_{-t}^{c, \lambda}(q, p))+\cM_t(q, p, c, \lambda).$$
  Then every map $\cU_t(u, c, \lambda)$ is continuous and the map $$\cU_t:C^0(M, \R)\times C\times\R_+\rightarrow C^0(K_T, \R)$$ is itself continuous if $C^0(M, \R)$ and $C^0(K_T, \R)$ are endowed with the uniform $C^0$ distances.
 
 This implies that the map 
 $(u, c, \lambda)\in C^0(M, \R)\times C\times \Lambda\mapsto T_t^{c, \lambda}u\in C^0(M, \R)$ that is defined by(see Equation (\ref{EC1discounted}))   $$T_t^{c, \lambda}u(q_0)=\min_{(q_0, p)\in K_T}\cU_t(u, c, \lambda)(q_0, p)$$
 is also continuous.

Moreover, the  corresponding $\argmin$ function, that is the function $$\cE_{\color{blue} t}: M\times C^0(M, \R)\times  C\times \Lambda\rightarrow \cK(K_t)$$ that takes its values is the set $\cK(K_t)$ of non-empty compact subsets of $K_t$ and is defined by
 $$\cE_t(q_0,u,  c, \lambda)=\{ (q_0, p)\in T^*_{q_0}M;  T_t^{c, \lambda}u(q_0)=\cU_t(u, c, \lambda)(q_0, p)  \}$$
 is an upper-continuous function when $\cK(K_t)$  is endowed with the Hausdorff distance. 
Hence $$\cF_t(u, c, \lambda)=\bigcup_{q\in M}\cE_t(q,u,  c, \lambda)$$ is also compact. Observe that 
 $\cG(dT^{c, \lambda}_tu)\subset   \cF_t(u, c, \lambda)$  and then 
 \begin{equation}\label{Einclusgraph} \overline{\cG(dT^{c, \lambda}_tu)}\subset  \cF_t(u, c, \lambda).\end{equation}
  Let us prove that Equation (\ref{Einclusgraph}) is an equality for $u=u_0$. We recall that $u_0$ is a fixed point of the semi-group $(T^{c_0, \lambda_0}_\tau )$. But we know from Equation~{(\ref{Emingraph})} that for every $\tau\in (0, t)$, we have
 $$ \cF_t(u_0, c_0, \lambda_0)\subset  \cF_\tau(u_0, c_0, \lambda_0)\subset \varphi^{c_0, \lambda_0}_{\tau}\left(\cG(du_0)\right)\subset \varphi^{c_0, \lambda_0}_{\tau}\left(\overline{\cG(du_0)}\right)$$
 and then by taking the limit for $\tau$ tending to $0$, we deduce that 
\begin{equation}\label{Eotherinclus}  \cF_t(u_0, c_0, \lambda_0)\subset \overline{\cG(du_0)}.\end{equation}
 Equations (\ref{Einclusgraph} ) and (\ref{Eotherinclus} ) give finally 
\begin{equation} \label{Eequal} \overline{\cG(du_0)}= \cF_t(u_0, c_0, \lambda_0).\end{equation}
 
 Let us now fix $\varepsilon>0$. If $(u_0, c_0, \lambda_0, q_0)\in C^0(M, \R)\times C\times [0, 1]\times M$, there exists a neighborhood $\cV$ of $(u_0, c_0, \lambda_0)$ and a neighborhood $V$ of $q_0$ such that for every $(q,u,  c, \lambda)\in V\times \cV$, we have
 $$\cE(q,u,  c, \lambda)\subset  \cE(q_0,u,  c, \lambda)_\varepsilon,$$
 where we use the following notation for $K\subset T^*M$.
 
 \begin{nota} 
 $$K_\varepsilon =\{ x\in T^*M; d(x, K)\leq \varepsilon\}.$$
 \end{nota}
  Then we can extract a finite covering of $M$ by $(V_i)_{1\leq i\leq n}$ that  are built as before, with neighborhoods $\cV_i\times V_i$ of $(u_0, c_0, \lambda_0, q_i)$. Then $( \cE(q_i,u_0,  c_0, \lambda_0)_\varepsilon)_{1\leq i\leq n}$ is a covering of  $\cF_t(u_0, c_0, \lambda_0)=\overline{\cG(du_0)}$ by equation (\ref{Eequal} ) and we have for $\displaystyle{(u, c, \lambda)\in \cV=\bigcap_{1\leq i\leq n}\cV_i}$
  $$ \overline{\cG(dT^{c, \lambda}_tu)}\subset  \cF_t(u, c, \lambda)=\bigcup_{q\in M}\cE_t(q,u,  c, \lambda)\subset \bigcup_{i=1}^n  \cE_t(q_i,u_0,  c_0, \lambda_0)_\varepsilon\subset (\overline{\cG(du_0)})_\varepsilon.
 $$
 To obtain the wanted conclusion, we only need to prove that $\overline{\cG(du_0)}\subset  (\overline{\cG(dT^{c, \lambda}_tu)})_\varepsilon$ for $(u, c, \lambda)$ close to $(u_0,c_0, \lambda_0)$.

We denote by $\cD_0$ the set of derivability of $u_0$ and we consider a finite covering of $\overline{\cG(du_0)}$ by open balls with radius $\eta=\frac{\varepsilon}{2}$ and centers at $(q_i, du_0(q_i))_{1\leq i\leq n}$ where $q_i\in \cD_0$. As the map $\cE_t$ is upper semi-continuous and has for value at every $(q_i, u_0, c_0, \lambda_0)$ the  set $\{ (q_i, du_0(q_i))\}$, there exists $\alpha>0$ such that
$$\begin{matrix}\| u-u_0\|_{c_0}<\alpha, d(c,c_0)<\alpha, &|\lambda-\lambda_0|<\alpha, d(Q_i, q_i)<\alpha\hfill\\
&\Rightarrow d((q_i, du_0(q_i)), \cE_t(Q_i, u ,c, \lambda))<\varepsilon_0.\end{matrix}$$
Then we choose for every $i$ a $Q_i$ where $u$ is differentiable and obtain
$$d((q_i, du_0(q_i)), (Q_i, dT_t^{c, \lambda}u(Q_i))<\varepsilon_0.$$
Then we have
$$\begin{matrix}\forall q\in \cD_0, \exists i\in [1, n],  d((q, du_0(q)), (Q_i, dT_t^{c, \lambda}u(Q_i)) \hfill\\
\leq d((q, du_0(q)), (q_i, du_0(q_i))+d((q_i, du_0(q_i)), (Q_i, dT_t^{c, \lambda}u(Q_i))\leq 2\varepsilon_0=\varepsilon.\end{matrix}$$

\end{proof}

\subsection{Hausdorff distance in $T^*M$ and $C^1$ convergence}
We will prove a proposition that implies that if a family of pseudographs $(\cG(\eta_{c_\lambda}+du'_\lambda))_{\lambda\in\Lambda}$ converges  to the graph of $\cG(\eta_c+du)$ for the Hausdorff distance and if the map $u:M\rightarrow \R$ is $C^1$, then the derivatives $(du_\lambda)$ $C^0$- uniformly converge to $du$. Hence Corollaries \ref{CC1discounted} and \ref{CC1cohom} comes easily from Theorems \ref{TC1discounted} and \ref{TC1cohom}.

\begin{nota}
If $K\subset T^*M$ and $q\in M$, we will denote by $K_q=K\cap T_q^*M$.
\end{nota}
\begin{prop}\label{PC1graph}
Let us consider a family $(K^\lambda)_{\lambda\in\Lambda}$ of compact subsets of $T^*M$ and let $\cG=\cG(\eta)\subset T^*M$ be the graph of a continuous map $\eta$ defined on the whole $M$. Assume that $\displaystyle{\lim_{\lambda\rightarrow \lambda_0}d_H(K^\lambda, \cG)=0}$. Then
$$\lim_{\lambda\rightarrow \lambda_0}\sup_{q\in M} \sup_{x\in K^\lambda_q} d(\eta(q), x)=0.$$
\end{prop}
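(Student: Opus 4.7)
The plan is to argue by contradiction using the compactness of $M$ together with the continuity of $\eta$ and of the canonical projection $\pi\colon T^*M\to M$. Suppose the claim fails. Then there exist $\varepsilon_0>0$, a sequence $\lambda_n\to\lambda_0$ in $\Lambda$, a sequence of points $q_n\in M$, and a sequence $x_n\in K^{\lambda_n}_{q_n}$ such that
\[
d(\eta(q_n),x_n)\geq \varepsilon_0\quad\text{for every }n.
\]
Since $M$ is a closed manifold, up to extraction we may assume $q_n\to q_*$ for some $q_*\in M$, and by continuity of $\eta$ we have $\eta(q_n)\to\eta(q_*)$.

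Next I would use the Hausdorff convergence $d_H(K^{\lambda_n},\cG)\to 0$ to locate $x_n$. Since $x_n\in K^{\lambda_n}$, there exists $y_n\in \cG$ with $d(x_n,y_n)\leq \rho(K^{\lambda_n},\cG)+\tfrac1n\to 0$. Write $y_n=(q_n',\eta(q_n'))$ with $q_n'\in M$. Because $\pi$ is continuous (indeed locally Lipschitz for the chosen Riemannian metric on $T^*M$), $d(x_n,y_n)\to 0$ implies $d_M(q_n,q_n')\to 0$, so $q_n'\to q_*$ as well. By continuity of $\eta$, $\eta(q_n')\to\eta(q_*)$, hence $y_n\to (q_*,\eta(q_*))$ in $T^*M$, and therefore $x_n\to (q_*,\eta(q_*))$.

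Combining the two convergences, both $\eta(q_n)$ and $x_n$ tend to $\eta(q_*)$ in $T^*M$, so the triangle inequality yields
\[
d(\eta(q_n),x_n)\leq d(\eta(q_n),\eta(q_*))+d(\eta(q_*),x_n)\longrightarrow 0,
\]
contradicting $d(\eta(q_n),x_n)\geq \varepsilon_0$. Since the sup in the statement is taken only over fibers $K^\lambda_q$ that are non-empty (an empty sup can be set to $0$ by convention), this contradiction establishes the uniform convergence.

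The argument is essentially a compactness and continuity exercise; the only subtle point is to use Hausdorff convergence to pin down the base point $\pi(x_n)$ near $q_*$ rather than letting $x_n$ escape inside its own fiber, which is guaranteed by the continuity of $\pi$ and the fact that the limit set $\cG$ is the graph of a single-valued continuous section. No quantitative estimate is needed, so I do not anticipate any hard step beyond this bookkeeping.
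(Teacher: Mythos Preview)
Your proof is correct and follows essentially the same contradiction strategy as the paper: extract sequences $\lambda_n\to\lambda_0$, $q_n\to q_*$, $x_n\in K^{\lambda_n}_{q_n}$ with $d(\eta(q_n),x_n)\geq\varepsilon_0$, and use Hausdorff convergence together with the continuity of $\eta$ to derive a contradiction. The only cosmetic difference is in the final step: the paper first extracts a convergent subsequence $x_n\to x$, observes that $x\notin\cG(\eta)$ because $d(\eta(q_0),x)\geq\varepsilon$, and then uses that $\cG(\eta)$ is closed to find a ball around $x$ disjoint from $\cG$, contradicting $d_H\to 0$; you instead pick $y_n\in\cG$ with $d(x_n,y_n)\to 0$ and use continuity of $\pi$ and $\eta$ to force $x_n\to\eta(q_*)$ directly, which is slightly more streamlined but equivalent in spirit.
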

\begin{proof}
Assume that the result is not true. Then there exists a sequence $(\lambda_n)$ that converges to $\lambda_0$  and an $\varepsilon>0$ such that
\begin{equation}\label{Eblabla}\forall~ n\in\N, \exists~ q_n\in M, \exists~ x_n\in K^{\lambda_n}_{q_n}, \quad d(\eta(q_n),x_n)\geq \varepsilon.\end{equation}
Extracting a subsequence, we can assume that $(q_n)$ converges to some $q_0$ in $M$. For $n\geq N$ large enough we have 
$${\sup_{x\in K^{\lambda_n}} d( x, \cG)}\leq 1.$$
Hence for $n\geq N$, we have $d(x_n, \cG(\eta))\leq 1$, which means that $x_n$ takes its values in a fixed compact set. Extracting a subsequence, we can then assume that $(x_n)$ converges to some $x\in T^*M$. We deduce from equation  \eqref{Eblabla}   and continuity of $du$  that $d(\eta(q), x)\geq \varepsilon$. \\
This implies that $x\notin \cG(\eta)$. Let us recall that the graph of a continuous map is closed. Hence there exists some $\beta>0$ such that $B(x, 2\beta)\cap \cG(\eta)=\emptyset$. \\
As $(x_n)$ converges to $x$, for $n\geq N'$ large enough, we have $x_n\in B(x, { \beta})$ and then $B(x_n, { \beta})\cap \cG(\eta)=\emptyset$. Hence we obtain finally
$$\forall n\geq N', x_n\in K^{\lambda_n}\quad{\rm and}\quad d(x_n, \cG(\eta))\geq \beta;
$$
and then 
$$\forall n\geq N', d_H( K^{\lambda_n}, \cG(\eta))\geq \beta;$$
which contradicts the hypothesis.
\end{proof}

\section{Proof of the $C^2$ convergence}

We give a proof that is valid for Theorems \ref{TC2discounted},  \ref{TC2convLO} and \ref{TC2cohomclass}.\\
We fix $u_\infty$ the $C^1$ and upper Green regular weak K.A.M. solution. It is proved in \cite{Fathi2008} that any $C^1$ weak K.A.M. solution is $C^{1,1}$, so $u_\infty$ is $C^{1,1}$ and then semi-concave and semi-convex.\\
We recall that we consider a family of semi-concave functions $u$ that
\begin{itemize}
\item converges to $u_\infty$ in $C^1$ uniform topology; this comes from Corollaries  \ref{CC1discounted} and \ref{CC1cohom} and also  \cite{Arnaud2005} joint with  Proposition \ref{PC1graph};
 \item satisfies the following lemma.
\end{itemize}

\begin{lemma}\label{D2Lebestimate}
For $\varepsilon>0$ small enough,  we have
\[
\text{Leb}\{ \theta\in \mathbb{T}^d ~:~ D^2 u - D^2 u_\infty \nless  \varepsilon {\bf 1} \} < \varepsilon
\]
where ${\bf 1}$ is the identity matrix of the standard scalar product.
\end{lemma}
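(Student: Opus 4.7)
The plan is to combine three ingredients: the pointwise bound $D^2 u(\theta) \leq \cH(G_\tau^{c,\lambda}(\theta, du(\theta)))$ of Proposition \ref{tLOGreen}, valid at Lebesgue-a.e.\ $\theta$ where $u$ is twice differentiable; the identity $D^2 u_\infty(\theta) = \cH(G_+(\theta, du_\infty(\theta)))$ from upper Green regularity; and the upper semi-continuity of Green bundles from Proposition \ref{PupsemicGreen}. The rough picture: upper semi-continuity controls $\cH(G_\tau^{c,\lambda})$ from above by $\cH(G_+(\theta, du_\infty(\theta))) = D^2 u_\infty(\theta)$ once $\tau$ is large and the parameters of $u$ are close to those of $u_\infty$, which combined with Proposition \ref{tLOGreen} yields the desired inequality $D^2 u < D^2 u_\infty + \varepsilon \mathbf{1}$ on a set of large measure.

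First, I would apply Lusin's theorem to extract a compact set $E \subset \T^d$ with $\text{Leb}(\T^d \setminus E) < \varepsilon/2$ on which $D^2 u_\infty$ exists, equals $\cH(G_+(\cdot, du_\infty(\cdot)))$ (by upper Green regularity), and is continuous as a map into the space of symmetric matrices. Since $u_\infty$ is a $C^1$ weak K.A.M.\ solution, the negative orbit of each $(\theta_0, du_\infty(\theta_0))$ has no conjugate points, so the triple $((\theta_0, du_\infty(\theta_0)), c_0, \lambda_0)$ lies in $\cU_-^\infty$ at the baseline parameter $(c_0, \lambda_0)$ of the theorem under consideration.

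Second, for each $\theta_0 \in E$, Proposition \ref{PupsemicGreen} provides a neighborhood $\cN(\theta_0)$ of $((\theta_0, du_\infty(\theta_0)), c_0, \lambda_0)$ and a time $T(\theta_0) > 0$ such that $\cH(G_\tau^{c,\lambda}(y)) \leq D^2 u_\infty(\theta_0) + (\varepsilon/4) \mathbf{1}$ whenever $(y, c, \lambda) \in \cN(\theta_0)$, $\tau \geq T(\theta_0)$, and the past orbit of $y$ up to time $-\tau$ has no conjugate points. By compactness of $E$, continuity of $D^2 u_\infty$ on $E$, and continuity of $\theta \mapsto (\theta, du_\infty(\theta))$, a finite subcover argument yields a uniform time $T^\star$ and a uniform neighborhood of the baseline parameter $(c_0, \lambda_0)$ such that for every $\theta \in E$, $\tau \geq T^\star$, $(c, \lambda)$ in that neighborhood, and $u$ with $du$ close enough to $du_\infty$ in $C^0$,
\[
\cH(G_\tau^{c,\lambda}(\theta, du(\theta))) \leq D^2 u_\infty(\theta) + (\varepsilon/2)\, \mathbf{1}.
\]

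Third, I invoke Proposition \ref{tLOGreen} with some $\tau \geq T^\star$: in Theorems \ref{TC2discounted} and \ref{TC2cohomclass}, $u$ is a fixed point of $T_\tau^{c,\lambda}$, so the inequality holds immediately; in Theorem \ref{TC2convLO} we simply take the Lax-Oleinik time variable $\geq T^\star$. Hence at Lebesgue-a.e.\ $\theta \in E$,
\[
D^2 u(\theta) \leq \cH(G_\tau^{c,\lambda}(\theta, du(\theta))) \leq D^2 u_\infty(\theta) + (\varepsilon/2)\, \mathbf{1} < D^2 u_\infty(\theta) + \varepsilon \mathbf{1}.
\]
Thus $\{\theta : D^2 u - D^2 u_\infty \nless \varepsilon \mathbf{1}\}$ is contained in $\T^d \setminus E$ up to a Lebesgue-null set, so has measure at most $\varepsilon/2 < \varepsilon$.

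The main obstacle is the uniformization step: Proposition \ref{PupsemicGreen} is pointwise in the base point, and a genuine compactness argument is needed to convert the local bound by $D^2 u_\infty(\theta_0)$ at each reference point into a bound by $D^2 u_\infty(\theta)$ at the nearby actual point, uniformly for $\theta \in E$. The continuity of $D^2 u_\infty$ on $E$ obtained from Lusin's theorem is precisely what allows this interpolation and lets the change of base point be absorbed into the $\varepsilon/2$ slack.
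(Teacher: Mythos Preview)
Your proposal is correct and follows essentially the same approach as the paper: Lusin's theorem to get a compact set on which $D^2u_\infty$ is continuous, then Proposition~\ref{PupsemicGreen} together with Proposition~\ref{tLOGreen} applied pointwise, and finally a compactness/finite-subcover argument combined with the $C^1$-closeness of $du$ to $du_\infty$ and the continuity of $D^2u_\infty$ on the Lusin set to pass from the bound at the reference point $\theta_0$ to the bound at the actual point $\theta$. The paper's version is organized slightly differently (it chooses the neighborhood radii $\eta_x$ smaller than the modulus of continuity $\alpha$ up front, rather than leaving the interpolation as a final step), but the substance is identical.
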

\begin{proof}
Since $u_{\infty}$ is semi-concave, $D^2 u_\infty: \mathbb{T}^d \rightarrow\mathcal{S}_d$ is defined (and measurable) Lebesgue almost everywhere. Due to Lusin's theorem, there exists a compact $\mathcal{K}\subset \mathbb{T}^d$ with $\text{Leb}(\mathbb{T}^d\setminus \mathcal{K}) < \frac{\varepsilon}{10}$ such that $D^2u_\infty\big|_{\mathcal{K}}$ is continuous. Hence, there exists $\alpha >0$, such that for any $\theta, \theta' \in \mathcal{K}$ with $d(\theta, \theta')< \alpha$ we have 
\begin{equation}\label{D2estimate}
\| D^2u_\infty(\theta) - D^2 u_\infty(\theta')\| < \frac{\varepsilon}{2}.
\end{equation}

Let us recall that the approximated solutions $u$ that we consider is semi-concave, $C^0$-close to  $u_\infty$ and is of one of the three possible kinds that we now describe.\begin{itemize}
\item $u=u^\lambda$ that is a discounted solution for a small $\lambda$; then by Corollary\ref{CC1discounted}, $u$ is $C^1$ close to $u_\infty$;
\item $u=T_tU$ for some $U\in C^0(\T^d, \R)$ and some $t>0$ large enough; then by Theorem 1 of \cite{Arnaud2005} and Proposition \ref{PC1graph}, $u$ is $C^1$ close to $u_\infty$;
\item $u=u_c$ that is a weak K.A.M. solution for a cohomology class $c$ close to $0$; then by Corollary~\ref{CC1cohom}, $u$ is $C^1$ close to $u_\infty$.
\end{itemize}
We deduce that for
every $x =(\theta, du_\infty(\theta) ) \in {\cG}(du_{\infty|\mathcal{K}})$, due to Propositions \ref{PupsemicGreen} and \ref{tLOGreen}, there exists $\eta_x$ with $\eta_x< \alpha$ such that for every $y\in B(\theta, \eta_x)\times B(du_\infty(\theta), \eta_x) $, we have  for some $t>0$ large enough
\begin{equation}\label{Greenbundleestimate}
D^2 u(\pi(y)) < \mathcal{H} (G_t(y))  < D^2 u_\infty(\theta) + \frac{\varepsilon}{2} {\bf 1}  = \mathcal{H}(G_+(x)) +  \frac{\varepsilon}{2} {\bf 1}.
\end{equation}

 Since $\mathcal{G}(du_\infty\big|_\mathcal{K})$ is compact, there exists $n$ and $x_i\in \mathcal{K}$ with $i=1, \cdots, n$  such that
 \[
 \mathcal{G}(du_\infty\big|_\mathcal{K}) \subset \bigcup_{i=1}^n B(\theta_i, \eta_{x_i}) \times B(du_\infty(\theta_i), \eta_{x_i}):=\mathscr{N}
 \]
 
 Because $u$ converges to $u_\infty$ in $C^1$ uniform topology, one can choose $u$ such that $\mathcal{G}(du\big|_\mathcal{K}) \subset \mathscr{N}$. For every $\theta\in \mathcal{K}$,  without loss of generality, we assume $(\theta, du(\theta)) \in B(\theta_1, \eta_{x_1})\times B(du(\theta_1), \eta_{x_1})$. Using \eqref{D2estimate} and \eqref{Greenbundleestimate}, we obtain
 \[
 D^2 u(\theta) < D^2u_\infty(\theta_1) + \frac{\varepsilon}{2} {\bf 1} <  D^2u_\infty(\theta) + \varepsilon {\bf 1}.  \qedhere
 \]
\end{proof}

We fix one integer $i\in\{1, \dots, d\}$ and we consider one (non-injective) arc $\gamma:t\in[0, {3}]\rightarrow \gamma(t)\in\T^d$ defined by $t=\theta_i\in[0, {3}]$ and the other coordinates $\theta_j$ fixed. Because $u$ and $u_\infty$ are semi-concave, they are differentiable Lebesgue almost everywhere and admits a second derivative Lebesgue almost everywhere. By Fubini Theorem,  there exists some $\sigma\in [0, 1)$ such that  for Lebesgue almost every choice of $(\theta_j)_{j\not=i}$, $u$ is differentiable at $\gamma(\sigma)$ and  is twice differentiable Lebesgue almost everywhere along the arc $\gamma$ and we can write
$$e(\gamma)=u(\gamma({3}))-u_\infty(\gamma({3}))-(u(\gamma(\sigma))-u_\infty(\gamma(\sigma)))=\int_\sigma^{3}(du(\gamma(t))-du_\infty(\gamma(t)))\dot\gamma(t)dt;$$
and also because $u-u_\infty$ is semi-concave and $\ddot\gamma=0$, we have\\
$e(\gamma)\leq$
\[
\begin{split}
&
\int_\sigma^{3}\bigg( \big( du(\gamma(\sigma))-du_\infty(\gamma(\sigma)) \big)\ \dot\gamma(\sigma)+\\
&\int_\sigma^t[(D^2u(\gamma(s))-D^2u_\infty(\gamma(s)))(\dot\gamma(s), \dot\gamma(s)) + (du(\gamma(s))-du_\infty(\gamma(s)))\ddot\gamma(s)  ]ds\bigg)dt
\end{split}
\]
i.e.
$$ e(\gamma)\leq({3}-\sigma)\big(du-du_\infty)(\gamma(\sigma) \big) \ \dot\gamma(\sigma)+\int_\sigma^{3} {(3-t)} (D^2u(\gamma(t))-D^2u_\infty(\gamma(t)))(\dot\gamma(t), \dot\gamma(t))dt,
$$
and then because $\dot\gamma(t)=e_i$ and then $\|\dot\gamma\|=1$
\begin{equation}\label{Eint}e(\gamma)-({3}-\sigma)\left[ (du-du_\infty)(\gamma({\sigma}))e_i+{ \frac{3}{2}}\varepsilon\right]  \leq\int_\sigma^{3} ({3} -t ) (D^2u(\gamma(t))-D^2u_\infty(\gamma(t))-\varepsilon{\bf 1})(e_i, e_i)dt.
\end{equation}

{
Let $E_1 =\{t\in [0,3]~:~( D^2u-D^2u_\infty) (\gamma(t)) - \varepsilon{\bf 1}<0\}$ and $E_2 = [0,3]\setminus E_1$. Because of the uniform semi-concavity of $u$ and because $u_\infty$ is $C^{1,1}$, there exists $K>0$ such that $( D^2u-D^2u_\infty) (\theta) \leq K {\bf 1} $  for any $\theta\in\T^d$ where $D^2u-D^2u_\infty$ exists. 
{We deduce
\[
\begin{split}
&e(\gamma)-({3}-\sigma)\left[ (du-du_\infty)(\gamma(\sigma))e_i+{ \frac{3}{2}}\varepsilon\right]\\
\leq& \int_{E_2\cap[\sigma, 3]}(3-t)(D^2u(\gamma(t))-D^2u_\infty(\gamma(t))-K{\bf 1})(e_i, e_i)dt+\frac{3K}{2}(3-\sigma)Leb(E_2)\\
&\quad +\int_{E_1\cap[\sigma, 3]} (3-t)(D^2u(\gamma(t))-D^2u_\infty(\gamma(t))-\varepsilon{\bf 1})(e_i, e_i)dt\\
\end{split}
\]

Hence as the functions that appears in the integrals are non-positive, we deduce also
\[
\begin{split}
&e(\gamma)-({3}-\sigma)\left[ (du-du_\infty)(\gamma(\sigma))e_i+{ \frac{3}{2}}\varepsilon\right]\\
\leq& \int_{E_2\cap[1, 2]} (D^2u(\gamma(t))-D^2u_\infty(\gamma(t))-K{\bf 1})(e_i, e_i)dt+\frac{3K}{2}(3-\sigma)Leb(E_2)\\
&\quad +\int_{E_1\cap[1, 2]} (D^2u(\gamma(t))-D^2u_\infty(\gamma(t))-\varepsilon{\bf 1})(e_i, e_i)dt.\\
\end{split}
\]

We introduce the notation $\cE=\{ \theta\in \mathbb{T}^d ~:~ D^2 u - D^2 u_\infty <  \varepsilon {\bf 1} \}$. By Lemma~\ref{D2Lebestimate}, we have $Leb(\T^d\backslash\cE)<\varepsilon$.
Integrating with respect to $(\theta_j)_{j\not=i}\in \T^{d-1}$, we finally obtain

\[
\begin{split}
&-{3} \left(\|u-u_\infty\|_\infty+\| u-u_\infty\|_{C^1}+ \frac{3}{2}\varepsilon \right)\\
\leq& \int_{\T^d\backslash\cE}  (D^2u(\theta)-D^2u_\infty(\theta)-K{\bf 1})(e_i, e_i)d\theta+\frac{3K}{2}(3-\sigma)Leb(\T^d\backslash\cE)\\
&\quad +\int_{\cE} (D^2u(\theta)-D^2u_\infty(\theta)-\varepsilon{\bf 1})(e_i, e_i)d\theta\\
\end{split}
\]

and then because $Leb(\T^d\backslash \cE)<\varepsilon$, we have
\[
\begin{split}
&-{3} \left(\|u-u_\infty\|_\infty+\| u-u_\infty\|_{C^1}+ \frac{3}{2}(K+1)\varepsilon \right)\\
\leq& \int_{\T^d\backslash\cE}  (D^2u(\theta)-D^2u_\infty(\theta)-K{\bf 1})(e_i, e_i)d\theta\\
&\quad +\int_{\cE}(D^2u(\theta)-D^2u_\infty(\theta)-\varepsilon{\bf 1})(e_i, e_i)d\theta.\\
\end{split}
\]
We deduce that
\begin{equation}\label{EsupK}
-\int_{\T^d\backslash\cE}  (D^2u(\theta)-D^2u_\infty(\theta)-K{\bf 1})(e_i, e_i)d\theta\leq {3} \left(\|u-u_\infty\|_\infty+\| u-u_\infty\|_{C^1}+ \frac{3}{2}(K+1)\varepsilon \right)
\end{equation}
and
\begin{equation}\label{Esupeps}
-\int_{\cE}  (D^2u(\theta)-D^2u_\infty(\theta)-\varepsilon{\bf 1})(e_i, e_i)d\theta\leq {3} \left(\|u-u_\infty\|_\infty+\| u-u_\infty\|_{C^1}+ \frac{3}{2}(K+1)\varepsilon \right).
\end{equation}

Let $v=\sum v_ie_i\in \R^d$ such that $\|v\|=1$. Then we have  for every $\theta\in\cE$
$$0\leq -(D^2u(\theta)-D^2u_\infty(\theta)-\varepsilon{\bf 1})(v, v)\leq -d^2\sup_{1\leq i\leq d}(D^2u(\theta)-D^2u_\infty(\theta)-\varepsilon{\bf 1})(e_i, e_i)$$
and for every $\theta\in\T^d\backslash \cE$
$$0\leq -(D^2u(\theta)-D^2u_\infty(\theta)-K{\bf 1})(v, v)\leq -d^2\sup_{1\leq i\leq d}(D^2u(\theta)-D^2u_\infty(\theta)-K{\bf 1})(e_i, e_i).$$
We deduce that
$$\int_\cE \| (D^2u-D^2u_\infty)(\theta)-\varepsilon{\bf 1}\|d\theta\leq-d^2\sum_{1\leq i\leq d}\int_\cE((D^2u-D^2u_\infty)(\theta)-\varepsilon{\bf 1})(e_i, e_i)d\theta$$
and
$$\int_{\T^d\backslash\cE} \| (D^2u-D^2u_\infty)(\theta)-K{\bf 1}\|d\theta\leq-d^2\sum_{1\leq i\leq d}\int_{\T^d\backslash\cE}((D^2u-D^2u_\infty)(\theta)-K{\bf 1})(e_i, e_i)d\theta.$$

Observe that $d_{2,1}(u, u_\infty)\leq$
$$\int_{\T^d\backslash\cE} \| (D^2u-D^2u_\infty)(\theta)-\varepsilon{\bf 1}\|d\theta+K.Leb(\T^d\backslash\cE)
+ \int_\cE \| (D^2u-D^2u_\infty)(\theta)-K{\bf 1}\|d\theta+\varepsilon.Leb(\cE)$$
and then $$d_{2,1}(u, u_\infty)\leq \int_{\T^d\backslash\cE} \| (D^2u-D^2u_\infty)(\theta)-K{\bf 1}\|d\theta
+ \int_\cE \| (D^2u-D^2u_\infty)(\theta)-\varepsilon{\bf 1}\|d\theta+(K+1)\varepsilon$$
{so $d_{2,1}(u, u_\infty)\leq$ $$ -d^2\sum_{1\leq i\leq d}\left( \int_{\T^d\backslash\cE}((D^2u-D^2u_\infty)(\theta)-K{\bf 1})(e_i, e_i)d\theta +
\int_\cE((D^2u-D^2u_\infty)(\theta)-\varepsilon{\bf 1})(e_i, e_i)d\theta\right)+(K+1)\varepsilon$$
}\\
Using Equations (\ref{EsupK}) and (\ref{Esupeps}), we deduce

$$d_{2,1}(u, u_\infty)\leq 6d^3\left(\|u-u_\infty\|_\infty+\| u-u_\infty\|_{C^1}+ {2}(K+1)\varepsilon \right).
$$

This is the wanted result.}

\bibliographystyle{alpha}
\bibliography{referencebis}
\end{document}